\documentclass[a4paper,12pt,leqno]{amsart}
\usepackage{color}
\usepackage{amsfonts}
\usepackage{amssymb}
\usepackage{amsmath}
\usepackage{amsthm}
\usepackage{hyperref}
\usepackage[
    backend=biber,
    style=numeric,
    sorting=nyt,
    maxbibnames=99,
    eprint=true,
    url=true,
    doi=true
]{biblatex}

\usepackage{mathtools}
\usepackage{enumitem}
\usepackage{pgfplots}
\pgfplotsset{compat=1.18}
\usepackage{xcolor}
\usepackage{mathrsfs}  
\usepackage{graphicx}
\usepackage{subcaption}
\usepackage{float}
\usepackage{fnpct}
\usepackage{bm}
\usepackage{dsfont}
\usepackage{scalerel,stackengine}
\usepackage{tikz}
\usepackage{tikz-3dplot}
\usepackage{caption}
\usepackage[margin=1in]{geometry}
\stackMath
\newcommand\reallywidehat[1]{%
\savestack{\tmpbox}{\stretchto{%
  \scaleto{%
    \scalerel*[\widthof{\ensuremath{#1}}]{\kern-.6pt\bigwedge\kern-.6pt}%
    {\rule[-\textheight/2]{1ex}{\textheight}}
  }{\textheight}%
}{0.5ex}}%
\stackon[1pt]{#1}{\tmpbox}%
}
\usepackage{bbm}

\makeatletter
\DeclareRobustCommand\widecheck[1]{{\mathpalette\@widecheck{#1}}}
\def\@widecheck#1#2{%
    \setbox\z@\hbox{\m@th$#1#2$}%
    \setbox\tw@\hbox{\m@th$#1%
       \widehat{%
          \vrule\@width\z@\@height\ht\z@
          \vrule\@height\z@\@width\wd\z@}$}%
    \dp\tw@-\ht\z@
    \@tempdima\ht\z@ \advance\@tempdima2\ht\tw@ \divide\@tempdima\thr@@
    \setbox\tw@\hbox{%
       \raise\@tempdima\hbox{\scalebox{1}[-1]{\lower\@tempdima\box
\tw@}}}%
    {\ooalign{\box\tw@ \cr \box\z@}}}
\makeatother

\newcommand{\Comment}[1]{}

\def\C{\mathbb{C}}
\def\R{\mathbb{R}}

\def\Z{\mathbb{Z}}
\def\N{\mathbb{N}}

\def\F{\mathbb{F}}

\newtheorem{thm}{Theorem}[section]
\newtheorem{cor}[thm]{Corollary}
\newtheorem{lemma}[thm]{Lemma}
\newtheorem{prop}[thm]{Proposition}
\theoremstyle{definition}
\newtheorem{remark}[thm]{Remark}
\newtheorem{defn}[thm]{Definition}
\newtheorem{pr}{Problem}[section]
\newtheorem{qt}{Question}[section]

\newtheorem{conj}{Conjecture}
\newtheorem{eg}[thm]{Example}
\usepackage{enumitem}
\newlist{casesp}{enumerate}{3} 
\setlist[casesp]{align=left, 
                 listparindent=\parindent, 
                 parsep=\parskip, 
                 font=\normalfont\bfseries, 
                 leftmargin=0pt, 
                 labelwidth=0pt, 
                 itemindent=.4em,labelsep=.4em, 
                 partopsep=0pt, 
                 }
\setlist[casesp,1]{label=Case~\arabic*:,ref=\arabic*}
\setlist[casesp,2]{label=Case~\thecasespi.\roman*:,ref=\thecasespi.\roman*}
\setlist[casesp,3]{label=Case~\thecasespii.\alph*:,ref=\thecasespii.\alph*}

\numberwithin{equation}{section}

\def\R{\mathbb R}

\def\beas{\begin{eqnarray*}}
\def\eeas{\end{eqnarray*}}

\newcounter{vremennyj}

\begin{document}

\author{Eyvindur Ari Palsson}
\address[E.~ Palsson]{Department of Mathematics, Virginia Tech, Blacksburg, VA 24061}
\email{palsson@vt.edu}
\author{Jian-An Wang}
\address[J.-A.~ Wang]{Department of Mathematics, Virginia Tech, Blacksburg, VA 24061}
\email{jiananwang@vt.edu}

\title[]{On Densities in the Erd\H{o}s-Falconer Distance Problems over the Integers}
\maketitle

\begin{abstract}
Motivated by the classical Erd\H{o}s and Falconer distance problems and Magyar’s work on distance sets in \(\mathbb Z^d\), we study discrete analogues of the Erd\H{o}s-Falconer unpinned and pinned distance problems over the integers. We ask how the discrete Hausdorff dimension of a set \(A\subseteq\mathbb Z^d\) influences the size of its squared-distance sets, measured by generalized upper natural density and upper discrete mass dimension. We propose conjectures for both problems and prove the unpinned conjecture in dimensions \(d=1,2,3\) and the pinned conjecture in dimensions \(d=1,2\). For the unpinned problem in higher dimensions, we obtain explicit lower bounds for the dimensions of distance sets using elementary integer sphere counting, results on the Erd\H{o}s distinct distances problem, and \(\ell^p\)-improving inequalities for discrete spherical averages. For the pinned problem, we obtain lower bounds for the dimensions of distance sets for some or every pin. We also establish more general results involving upper discrete mass dimension, upper discrete Banach dimension, and affine dimension, including bounds on the affine dimension of the exceptional set of pins. We construct examples showing that both conjectures are sharp in low dimensions. Further examples demonstrate how the distinctions among discrete Hausdorff dimension, upper discrete mass dimension, and upper discrete Banach dimension lead to different phenomena in the unpinned and pinned distance problems, including limitations on the nontrivial upper bounds these dimensions can provide for the dimensions of distance sets.
\end{abstract}

\section{Introduction}\label{Intro}
Let $R$ be a ring, and let $A$ be a subset of $R^d$. We define the \textbf{distance set} $\Delta^i(A)$ of $A$ for $i=1,2$ by
$$
\left\{
\begin{array}{ll}
     \Delta^1(A)=\left\{\|x-y\|\in R:x,y\in A\right\} &\text{if }R=\R  \vspace{0.1cm}\\
    \Delta^2(A)=\left\{\|x-y\|^2\in R:x,y\in A\right\} &\text{if }R=\R,\Z,\F_q 
\end{array}
\right.
$$
and the \textbf{pinned distance set} $\Delta^i(A,x)$ of $A$ and $x\in R^d$ by
$$
\left\{
\begin{array}{ll}
     \Delta^1(A,x)=\left\{\|x-y\|\in R:y\in A\right\} &\text{if }R=\R  \vspace{0.1cm}\\
    \Delta^2(A,x)=\left\{\|x-y\|^2\in R:y\in A\right\} &\text{if }R=\R,\Z,\F_q 
\end{array}
\right.
$$
where
$$
\|x-y\|^2=(x_1-y_1)^2+\cdots+(x_d-y_d)^2
$$
We write $x\lesssim y$ if there exists a constant $C>0$ such that $x\leq C y$. If the constant $C$ depends on a parameter $s$ but not $x,y$, we write it as $x\lesssim_{s} y$. Moreover, we write $x\approx y$ if $x\lesssim y$ and $x\gtrsim y$. Last but not least, we write $x\lessapprox y$ with the controlling parameter $n$ if for every $\epsilon>0$ there exists $C_{\epsilon}>0$ such that $x\leq C_{\epsilon}n^{\epsilon}y$.
\subsection{Background}
\subsubsection{Distinct Distance Problems}
In 1946, the distinct distance problem was first studied by Erd\H{o}s \cite{Erdos01051946}, who posed the following problem:
\begin{pr}
What is the least number of distinct distances determined by $n$ points in $\R^d$?
\end{pr}
Erd\H{o}s considered a lattice containing $n$ points and proposed the following conjecture:
\begin{conj}[Erd\H{o}s]
Let $d\geq2$ and $A\subseteq\R^d$ with $|A|=n$. Then
$$
\left|\Delta^2(A)\right|\gtrsim
\left\{
\begin{array}{ll}
    \frac{n}{\sqrt{\log{n}}}&\text{if }d=2  \vspace{0.1cm}\\
    n^{\frac{2}{d}}&\text{if }d\geq3
\end{array}
\right.
$$
\end{conj}
The Erd\H{o}s distance problem, which plays an important role in discrete geometry and arithmetic combinatorics, has been studied extensively; see \cite{sheffer2014distinct} for more on its history and variants. There are numerous famous techniques and theorems involved in the study of the Erd\H{o}s distance problem such as the Szemer\'edi–Trotter theorem from incidence geometry and polynomial partitioning from algebraic geometry; see for example \cite{MR4394303,szemeredi1983extremal}. The state-of-the-art results on the Erd\H{o}s distance problem are proved by Guth and Katz \cite{guth2015erdHos} for $d=2$, by Tidor, Yu, and Zakharov \cite{tidor2026erdhosdistinctdistancesproblem} for $d=3$, and by Solymosi and Vu \cite{solymosi2008near} for $d\geq4$. It is noteworthy that Guth and Katz derived the bound $\gtrsim\frac{n}{\log{n}}$, so they actually proved that the Erd\H{o}s conjecture is true for $d=2$ up to polylogarithmic factors. The recent breakthrough by Tidor, Yu, and Zakharov shows the bound $\gtrapprox n^{\frac{2}{3}}$, so the Erd\H{o}s conjecture is also true for $d=3$ up to subpolynomial factors.

In the 1980s, Falconer \cite{Falconer_1985} posed a famous distance problem, which is regarded as a continuous analog of the Erd\H{o}s distance problem, in geometric measure theory:
\begin{pr}
How large does the Hausdorff dimension of a compact subset $A\subseteq\R^d$ need to be to ensure that the distance set $\Delta^1(A)$ has positive Lebesgue measure?
\end{pr}
Falconer constructed an example inspired by the lattice construction for the Erd\H{o}s distance problem and proposed the following conjecture:
\begin{conj}[Falconer]
    Let $d\geq2$, and let $A$ be a compact subset of $\R^d$. Then
    $$
    \dim_{\mathcal{H}}(A)>\frac{d}{2}\Longrightarrow\mathcal{L}_1\left(\Delta^1(A)\right)>0
    $$
    where $\dim_{\mathcal{H}}$ is the Hausdorff dimension and $\mathcal{L}_1$ is the Lebesgue measure.
\end{conj}
The Falconer conjecture, which remains open in all dimensions and plays an important role in harmonic analysis, has attracted much attention in recent years. Recent progress \cite{du2019sharp,guth2020falconer,du2021weighted,du2021improved,du2023weighted} towards the Falconer conjecture shows that the current best thresholds are $\frac{5}{4}$ for $d=2$ and $\frac{d}{2}+\frac{1}{4}-\frac{1}{8d+4}$ for $d\geq3$.

The Erd\H{o}s and Falconer distance problems also motivate researchers to dive into variants of them, for instance, Iosevich and Rudnev {\cite{iosevich2007erdos}} stated a finite field $\F_q^d$-variant of the Erd\H{o}s-Falconer conjecture and related it to number theory, see also \cite{hart2011averages,koh2022note}.
\begin{conj}[Iosevich and Rudnev]\label{finitefields}
    Let $\F_q$ be a finite field with odd $q$ elements. Suppose that $d\geq2$ is even or $d,q\equiv3\pmod{4}$. Let $A$ be a subset of $\F_q^d$. Then
    $$
    |A|\gtrsim|\F_q|^{\frac{d}{2}}\Longrightarrow\left|\Delta^2(A)\right|\gtrsim|\F_q|
    $$
\end{conj}
\subsubsection{Pinned Distance Problems}
Erd\H{o}s \cite{MR411984} also formulated the pinned distance problem and conjecture: 
\begin{pr}
Does every finite set $A\subseteq\R^d$ contain a point $x$ such that the behavior of $\left|\Delta^2(A)\right|$ and $\left|\Delta^2(A,x)\right|$ is asymptotically the same?
\end{pr}
\begin{conj}
Let $d\geq2$ and $A\subseteq\R^d$ with $|A|=n$. Then there exists $x\in A$ such that
$$
\left|\Delta^2(A,x)\right|\gtrsim
\left\{
\begin{array}{ll}
    \frac{n}{\sqrt{\log{n}}}&\text{if }d=2  \vspace{0.1cm}\\
    n^{\frac{2}{d}}&\text{if }d\geq3
\end{array}
\right.
$$
\end{conj}
It is clear that $\left|\Delta^2(A)\right|\geq\left|\Delta^2(A,x)\right|$, so the pinned distance variant is stronger than the original distance problem. So far the current best results on the pinned distance problem are $\max_{x\in A}\left|\Delta^2(A,x)\right|\gtrsim|A|^{0.8641\dots}$ proved by Katz and Tardos \cite{MR2065258} for $d=2$ and applying Solymosi and Vu's recursion \cite{solymosi2008near} to Katz and Tardos's bound to derive the strongest result for $d\geq3$.

The pinned variant of the Falconer distance problem and its associated conjecture can similarly be formulated as follows (See for example \cite{MR4226184}):
\begin{pr}
How large does the Hausdorff dimension of a compact subset $A\subseteq\R^d$ need to be to guarantee that there exists a point $x\in A$ such that the pinned distance set $\Delta^1(A,x)$ has positive Lebesgue measure?
\end{pr}
\begin{conj}
Let $d\geq2$, and let $A$ be a compact subset of $\R^d$. Then there exists $x\in A$ such that
$$
\dim_{\mathcal{H}}(A)>\frac{d}{2}\Longrightarrow\mathcal{L}_1\left(\Delta^1(A,x)\right)>0
$$
\end{conj}
The Falconer pinned distance problem was first studied by Peres and Schlag \cite{MR1749437}. Liu \cite{MR3925111} made a connection between the main proof techniques for the pinned and unpinned variants; hence, all the best results on the pinned and unpinned variants have been the same.

The Erd\H{o}s and Falconer pinned distance conjectures also motivate researchers to study variants of them, for example, the finite field variant \cite{MR4411329,pham2026erdosfalconerdistanceproblemdimensions,pham2026sharpsphericalextensiontheorem,koh2022note} of the Erd\H{o}s and Falconer pinned distance conjectures:
\begin{conj}
Let $\F_q$ be a finite field with odd $q$ elements. Suppose that $d\geq2$ is even or $d,q\equiv3\pmod{4}$. Let $A$ be a subset of $\F_q^d$. Then there exists $x\in A$ such that
$$
|A|\gtrsim|\F_q|^{\frac{d}{2}}\Longrightarrow\left|\Delta^2(A,x)\right|\gtrsim|\F_q|
$$
\end{conj}
\subsubsection{Densities and Distance Sets over Integers}
In order to study the distance problems over integers, the concept of measuring subsets of $\Z^d$ is essential. One of the ideas is to consider densities in light of the similarity between densities and measures such as subadditivity and translation invariance; see \cite{buck1946measure,leonetti2020notions} for more on their similarity.

In 1990, Furstenberg, Katznelson, and Weiss \cite{article} established the following result involving distance sets and densities in $\R^2$: If $A$ is a measurable subset of $\R^2$, then
$$
\delta^{*}(A)>0\Longrightarrow [\lambda_A,\infty)\subseteq\Delta^1(A)\text{ for some }\lambda_A\in\R
$$
where
$$
\delta^{*}(A)=\lim_{N\to\infty}\sup_{t\in\R^d}\frac{\mathcal{L}_d\big(A\cap(t+C_N)\big)}{\mathcal{L}_d(C_N)}
$$
is the \textbf{upper Banach density} of $A\subseteq\R^d$, here $C_N$ denotes the cube $\left[-\frac{N}{2},\frac{N}{2}\right]^d$.

Based on Fourier analytic methods, Magyar \cite{magyar2008distance} proved a discrete analog of Furstenberg, Katznelson, and Weiss's result: If $d\geq5$ and $A\subseteq\Z^d$, then
$$
\delta^{*}(A)>0\Longrightarrow q_{d,\delta^{*}(A)}^2\Z\cap[\lambda_A,\infty)\subseteq\Delta^2(A)\text{ for some }\lambda_A\in\R\text{ and }q_{d,\delta^{*}(A)}\in\Z
$$
where
$$
\delta^{*}(A)=\lim_{N\to\infty}\sup_{t\in\Z^d}\frac{|A\cap(t+Q_N)|}{|Q_N|}
$$
is the \textbf{upper Banach density} of $A\subseteq\Z^d$, here $Q_N$ denotes the discrete cube $C_N\cap\Z^d$.

{Magyar's discrete analog implies the following statement: If $d\geq5$ and $A\subseteq\Z^d$, then
$$
\delta^{*}(A)>0\Longrightarrow \mathcal{N}^{*}\big(\Delta^2(A)\big)\geq\frac{1}{2q_{d,\delta^{*}(A)}^2}>0
$$
where
$$
\mathcal{N}^{*}\big(\Delta^2(A)\big)=\limsup_{N\to\infty}\frac{\left|\Delta^2(A)\cap\left[-\frac{N}{2},\frac{N}{2}\right]\right|}{\left|\left[-\frac{N}{2},\frac{N}{2}\right]\cap\Z\right|}
$$
is the \textbf{upper natural density} of $\Delta^2(A)\subseteq\Z$. Inspired by this statement, we are curious whether the assumption $\delta^{*}(A)>0$ can be weakened to get a weaker conclusion $\mathcal{N}_{\theta}^{*}\big(\Delta^2(A)\big)>0$ for a fixed $\theta\in[0,1]$, where
$$
\mathcal{N}_{\theta}^{*}\big(\Delta^2(A)\big)=\limsup_{N\to\infty}\frac{\left|\Delta^2(A)\cap\left[-\frac{N}{2},\frac{N}{2}\right]\right|}{\left|\left[-\frac{N}{2},\frac{N}{2}\right]\cap\Z\right|^{\theta}}
$$
is the $\bm{\theta}$\textbf{-upper natural density} of $\Delta^2(A)\subseteq\Z$. In other words, we are interested in what kinds of sparser subsets $A\subseteq\Z^d$ can have $\mathcal{N}_{\theta}^{*}\big(\Delta^2(A)\big)>0$ for a fixed $\theta\in[0,1]$. Thus, the language of discrete fractal dimensions is involved to study sparser subsets of $\Z^d$ and makes our problems resemble the Falconer distance problems.
}
\subsection{Main Results}
Inspired by the Erd\H{o}s and Falconer distance problems, their variants, and Magyar's discrete analog of Furstenberg, Katznelson, and Weiss's result, we set up the following discrete $\Z^d$-variant of the Erd\H{o}s-Falconer distance problem:
\begin{pr}\label{unp1}
    Fix $\theta\in[0,1]$. How large does the discrete Hausdorff dimension $\dim_{\mathcal{H}}^{\text{dis}}(A)$ of a subset $A$ of $\Z^d$ {need to be to ensure that} the distance set $\Delta^2(A)$ has positive $\theta$-upper natural density?
\end{pr}
Here, the $\theta$-upper natural density $\mathcal{N}_{\theta}^{*}\big(\Delta^2(A)\big)$ is a generalization of the usual upper natural density $\mathcal{N}^{*}\big(\Delta^2(A)\big)$ and will be viewed as a discrete analog of the Lebesgue measure in the Falconer conjecture.

Moreover, there is an equivalent problem:
\begin{pr}\label{unp2}
Can we find a lower bound\footnote{No nontrivial upper bound for $\dim_{\mathcal{UM}}^{\mathrm{dis}}\big(\Delta^2(A)\big)$ can depend solely on $\dim_{\mathcal H}^{\mathrm{dis}}(A)$ or $\dim_{\mathcal{UM}}^{\text{dis}}(A)$; see Example \ref{egggg5555}.} dependent on the discrete Hausdorff dimension $\dim_{\mathcal{H}}^{\text{dis}}(A)$ of a subset $A$ of $\Z^d$ for the upper discrete mass dimension of $\Delta^2(A)$?
\end{pr}
Based on the Erd\H{o}s conjecture\footnote{We will show that the Erd\H{o}s conjecture implies our Conjecture \ref{conj} in Section \ref{D}.} and our sharp example (Example \ref{sharp}) in low dimensions by mixing Erd\H{o}s's and Falconer's sharp examples, we propose the following conjecture:
\begin{conj}\label{conj}
    Let $A\subseteq\Z^d$. Then\footnote{Note that Conjecture \ref{conj} is applicable only when $\theta\in\left[0,\frac{1}{2}\right)$ if $d=1$ and $\theta\in[0,1)$ if $d\geq2$. This is similar to the fact that the Falconer conjecture fails trivially for $d=1$.}
    $$
    \dim_{\mathcal{H}}^{\text{dis}}(A)>
    \left\{
    \begin{array}{ll}
          2\theta &\text{if }d=1\\
         d\theta &\text{if }d\geq2 
    \end{array}
    \right.
    \Longrightarrow \mathcal{N}_{\theta}^{*}\big(\Delta^2(A)\big)>0
    $$
    or equivalently,
    $$
    \dim_{\mathcal{UM}}^{\text{dis}}\big(\Delta^2(A)\big)\geq
    \left\{
    \begin{array}{ll}
        \frac{\dim_{\mathcal{H}}^{\text{dis}}(A)}{2} &\text{if }d=1 \vspace{0.1cm}\\
        \frac{\dim_{\mathcal{H}}^{\text{dis}}(A)}{d} &\text{if }d\geq2 
    \end{array}
    \right.
    $$
\end{conj}
One can see this conjecture is similar to the Falconer conjecture if $\theta=\frac{1}{2}$. We can interpret $\mathcal{N}_{\frac{1}{2}}^{*}\big(\Delta^2(A)\big)>0$ as the fact that $\Delta^2(A)$ has the same (or greater) proportion to the size of the set of perfect squares.

Define the elementary sphere bound
$$
T_{\mathrm{Sph}}(d,\theta)=\left\{
\begin{array}{ll}
    2\theta &\text{if }d=1,2  \\
    d-2+2\theta &\text{if }d\geq3
\end{array}
\right.
$$
and the finite Erd\H{o}s distance bound 
$$
T_{\mathrm{Erd}}(d,\theta)=\left\{
\begin{array}{ll}
    2\theta &\text{if }d=1,2  \\
    \frac{d^2+2d+c_d}{d+1}\theta &\text{if }d\geq3
\end{array}
\right.
\text{ where }c_d=
\left\{
\begin{array}{ll}
    -3+\frac{4}{d^2}&\text{if }d\text{ is even}\\
    -3&\text{if }d\text{ is odd}
\end{array}
\right.
$$
For $d\geq5$, define the $\ell^p$-improving bound
$$
T_{\ell^p}(d,\theta)=\frac{(d+2)\theta+(d-2)}{2}
$$
Our first main theorem can partially solve Problem \ref{unp1} and Problem \ref{unp2}:
\begin{thm}\label{mainthm2}
Let $A\subseteq\Z^d$ and $0\leq\theta\leq1$. Then
$$
\dim_{\mathcal{H}}^{\text{dis}}(A)>T_{d}(\theta)
\Longrightarrow \mathcal{N}_{\theta}^{*}\big(\Delta^2(A)\big)>0
$$
where
$$
T_{d}(\theta)=\left\{
\begin{array}{ll}
    T_{\mathrm{Sph}}(d,\theta)=T_{\mathrm{Erd}}(d,\theta) &\text{if }d=1,2  \\
    T_{\mathrm{Erd}}(d,\theta) &\text{if }d=3\\
    \min\left\{T_{\mathrm{Erd}}(d,\theta),T_{\mathrm{Sph}}(d,\theta)\right\}&\text{if }d=4\\
    \min\left\{T_{\mathrm{Erd}}(d,\theta),T_{\ell^p}(d,\theta)\right\}&\text{if }d\geq5
\end{array}
\right.
$$
in other words,
$$
T_{d}(\theta)=\left\{
\begin{array}{ll}
    2\theta &\text{if }d=1,2  \\
    3\theta &\text{if }d=3\\
    \min\left\{\frac{17}{4}\theta,2+2\theta\right\}&\text{if }d=4\\
    \min\left\{\frac{d^2+2d+c_d}{d+1}\theta,\frac{(d+2)\theta+(d-2)}{2}\right\}&\text{if }d\geq5
\end{array}
\right.
$$
or equivalently,
$$
\dim_{\mathcal{UM}}^{\text{dis}}\big(\Delta^2(A)\big)\geq
\left\{
\begin{array}{ll}
\frac{\dim_{\mathcal{H}}^{\text{dis}}(A)}{2}&\text{if }d=1,2\\
\frac{\dim_{\mathcal{H}}^{\text{dis}}(A)}{3}&\text{if }d=3\\
\max\left\{\frac{4\dim_{\mathcal{H}}^{\text{dis}}(A)}{17},\frac{\dim_{\mathcal{H}}^{\text{dis}}(A)-2}{2}\right\}&\text{if }d=4\\
\max\left\{\frac{\dim_{\mathcal{H}}^{\text{dis}}(A)(d+1)}{d^2+2d+c_d},\frac{2\dim_{\mathcal{H}}^{\text{dis}}(A)-d+2}{d+2}\right\}&\text{if }d\geq5
\end{array}
\right.
$$
In particular, Conjecture \ref{conj} is true for $d=1,2,3$.
\end{thm}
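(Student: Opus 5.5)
The plan is to use the discrete Hausdorff dimension only through a single quantitative consequence. If $\dim_{\mathcal{H}}^{\text{dis}}(A)>s$, then there are infinitely many scales $N$ and a cube $Q$ of side $N$ with $|A\cap Q|\gtrsim N^{s}$. This is the standard mass-distribution/covering consequence of the Barlow--Taylor definition: otherwise $A$ could be covered by cubes with $\sum (\text{side})^{s}$ bounded, which forces $\dim_{\mathcal{H}}^{\text{dis}}(A)\le s$. Fix such a scale and write $A_N=A\cap Q$, translated to lie in $[0,N]^d$. All squared distances of $A_N$ lie in $[0,dN^2]$. It therefore suffices to show
\[
|\Delta^2(A_N)|\gtrsim N^{2\theta}
\]
along this sequence of scales, since $|[-\tfrac{dN^2}{2},\tfrac{dN^2}{2}]\cap\Z|^\theta\approx N^{2\theta}$ and this gives $\mathcal{N}_\theta^*(\Delta^2(A))>0$. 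The equivalent formulation with $\dim_{\mathcal{UM}}^{\text{dis}}$ then follows by letting $s\uparrow\dim_{\mathcal{H}}^{\text{dis}}(A)$ and solving $s=T_d(\theta)$ for $\theta$. So the whole proof reduces to three finite lower bounds for $|\Delta^2(B)|$, valid for any $B\subseteq[0,N]^d\cap\Z^d$ with $|B|=n\approx N^s$.

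\textbf{Sphere counting} (gives $T_{\mathrm{Sph}}$). The number of lattice points on a sphere of squared radius $r\le dN^2$ is $\lesssim_\epsilon N^{d-2+\epsilon}$ for $d\ge 3$ and $N^{\epsilon}$ for $d=2$; for $d=1$ it is at most $2$. Fix $x\in B$. The points of $B$ are distributed among $|\Delta^2(B,x)|$ spheres centered at $x$, so
\[
n\le |\Delta^2(B)|\cdot\max_r \#\{\text{lattice points on the sphere of squared radius }r\}.
\]
This yields $|\Delta^2(B)|\gtrapprox N^{s-(d-2)}$ for $d\ge3$ and $N^{s}$ for $d=1,2$. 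This exceeds $N^{2\theta}$ once $s>d-2+2\theta$ (resp.\ $s>2\theta$). The $N^\epsilon$ loss is absorbed because the inequality on $s$ is strict. This already settles $d=1,2$ and the $d=4$ branch $2+2\theta$.

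\textbf{Erd\H{o}s-type bounds} (gives $T_{\mathrm{Erd}}$). I invoke the finite distinct-distance theorems as black boxes: Guth--Katz for $d=2$, Tidor--Yu--Zakharov $|\Delta^2(B)|\gtrapprox n^{2/3}$ for $d=3$, and Solymosi--Vu for $d\ge4$, whose exponent I expect to be $\frac{2(d+1)}{d^2+2d+c_d}$ (in the form $\frac{2}{d}-\frac{2}{d(d+2)}$-type corrections for odd/even $d$). With $n\approx N^s$ this gives $|\Delta^2(B)|\gtrapprox N^{s\cdot 2(d+1)/(d^2+2d+c_d)}$, which beats $N^{2\theta}$ exactly when $s>\frac{d^2+2d+c_d}{d+1}\theta$. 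For $d=3$ this gives $3\theta$, which is Conjecture \ref{conj}. One must check that the stated $c_d$ matches the Solymosi--Vu exponent; this is bookkeeping.

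\textbf{$\ell^p$-improving bound} (gives $T_{\ell^p}$, $d\ge5$). This is the main obstacle. Let $\sigma_r$ be the normalized lattice sphere measure of radius $\sqrt r$, with $r\approx N^2$. Counting pairs and using Cauchy--Schwarz, I write
\[
n^2\le\sum_{r\in\Delta^2(B)}\#\{(x,y)\in B^2:\|x-y\|^2=r\}=\sum_{r\in\Delta^2(B)} |S_r|\,\langle 1_B*\sigma_r,1_B\rangle,
\]
where $|S_r|\approx r^{(d-2)/2}$ for $d\ge5$. The Hughes / Kesler--Lacey discrete spherical $\ell^p$-improving estimate at radius $\lambda$, in the form $\langle f*\sigma_\lambda,g\rangle\lesssim \lambda^{-d(1/p-1/p')}\|f\|_p\|g\|_p$ at the endpoint $p=\frac{d}{d-1}$ (or whatever the dual endpoint scaling is, up to $\lambda^\epsilon$), is then applied with $f=g=1_B$. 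This bounds each term by roughly $N^{d-2}\,N^{-\alpha}\,n^{2/p}$, so that
\[
n^2\lesssim |\Delta^2(B)|\,N^{d-2-\alpha}n^{2/p}.
\]
Choosing the exponents so that this rearranges to $|\Delta^2(B)|\gtrsim N^{(2s-d+2)/(d+2)\cdot 2}$ gives the threshold $s>\frac{(d+2)\theta+d-2}{2}$. The difficulty is getting the precise endpoint and scaling of the discrete improving inequality, including the restriction to radii with good arithmetic (major-arc) behavior. I would first try the restricted weak-type endpoint estimate with $|B|$ in place of $\|f\|_p$, which suffices for indicator functions.

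Finally, taking the minimum of the applicable thresholds in each dimension gives $T_d(\theta)$.
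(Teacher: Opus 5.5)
\textbf{The gap is in the $\ell^p$-improving step, which is the only source of $T_{\ell^p}$ and hence of the claim for $d\ge5$.}

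The other parts of your plan coincide with the paper's argument and settle $d\le4$:
\begin{enumerate}[label=(\roman*)]
\item the reduction to a cube of side $N$ carrying $\gtrsim N^{s}$ points of $A$ (this is just $\dim_{\mathcal{H}}^{\text{dis}}\le\dim_{\mathcal{UM}}^{\text{dis}}\le\dim_{\mathcal{UB}}^{\text{dis}}$);
\item the sphere-counting bound;
\item the Erd\H{o}s black boxes, namely Guth--Katz, and Tidor--Yu--Zakharov fed into the Solymosi--Vu recursion, which gives exactly the exponent $\frac{2(d+1)}{d^2+2d+c_d}$.
\end{enumerate}

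The estimate you actually write down, $\langle f*\sigma_\lambda,g\rangle\lesssim\lambda^{-d(1/p-1/p')}\|f\|_p\|g\|_p$ at $p=\frac{d}{d-1}$ (normalized $\sigma_\lambda$, radius $\lambda$), is false. Take $f=\delta_0$ and $g=\bm{1}_{S_\lambda}$. The left side is $1$, while the right side is $\approx\lambda^{-(d-2)}\lambda^{(d-2)(d-1)/d}=\lambda^{-(d-2)/d}\to0$.

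It is also inconsistent with your own conclusion. With $p'=d$, your inequality $n^2\lesssim|\Delta^2(B)|N^{d-2-\alpha}n^{2/p}$ has $\alpha=d-2$. It would therefore give $|\Delta^2(B)|\gtrapprox n^{2/d}$, i.e.\ the threshold $d\theta$ of Conjecture \ref{conj} itself, not $T_{\ell^p}$. So the exponent $\frac{2(2s-d+2)}{d+2}$ is read off from the answer rather than derived from any estimate you have named.

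\textbf{What the step needs.} The required input is the sharp Han--Yang estimate (Theorem \ref{lp}): $\|f*\bm{1}_{S_\lambda}\|_{\ell^{p'}}\lesssim_\epsilon\lambda^{\epsilon-\frac d2(\frac2p-1)}|S_\lambda|\,\|f\|_{\ell^p}$ for $\frac{d+2}{d}\le p\le 2$ and \emph{every} $\lambda\in\N$ (here $\lambda$ is the squared radius).

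\begin{enumerate}[label=(\arabic*)]
\item \emph{The endpoint $p=\frac{d+2}{d}$ is required.} The same $\delta_0$ test shows $p=\frac{d+2}{d}$ is the limit of bounds with this decay. Your scheme yields the threshold $\theta p'+d-p'$, which is strictly decreasing in $p'$ for $\theta<1$. So only the endpoint $p'=\frac{d+2}{2}$ produces $T_{\ell^p}(d,\theta)$, and any estimate with a smaller $p$-range gives a strictly worse threshold.
\item \emph{Validity for all $\lambda$ matters.} An estimate available only for arithmetically regular radii, as in Hughes's work, cannot be used. This is because $\Delta^2(B)$ is arbitrary and may consist entirely of bad radii.
\item \emph{All radii in $[1,dN^2]$ must be handled.} The radii do not all satisfy $\lambda\approx N^2$. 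You need $|S_\lambda|\lambda^{\epsilon-\frac d2(\frac2p-1)}\lesssim\lambda^{\frac{d}{p'}-1+\epsilon}$ to have a positive exponent, so that the worst radius is $dN^2$.
\end{enumerate}

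With these inputs, Hölder gives $n(n-1)\le\sum_{\lambda\in\Delta^2(B)\setminus\{0\}}\|\bm{1}_B\|_{p}\|\bm{1}_B*\bm{1}_{S_\lambda}\|_{p'}\lesssim|\Delta^2(B)|\,n^{2/p}N^{\frac{2d}{p'}-2+2\epsilon}$. Hence $|\Delta^2(B)|\gtrapprox n^{\frac{4}{d+2}}N^{-\frac{2(d-2)}{d+2}}$. This is Corollary \ref{cordensity}, and it yields exactly the exponent $\frac{2(2s-d+2)}{d+2}$ you wanted.
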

The progress towards the Erd\H{o}s conjecture is highly relevant to our Conjecture \ref{conj}. Actually, the finite Erd\H{o}s distance bound\footnote{Unlike in the unpinned case, the results on the Erd\H{o}s pinned distance problem cannot be applied to our discrete $\Z^d$-variant of the Erd\H{o}s-Falconer pinned distance problem. We will explain this in Section \ref{D}.} is based on the current best result on the Erd\H{o}s distance problem, and the Erd\H{o}s conjecture implies our Conjecture \ref{conj}. In particular, Guth and Katz's result can derive the finite Erd\H{o}s distance bound and imply our Conjecture \ref{conj} for $d=1,2$; nonetheless, our proof based on the elementary sphere bound is much shorter and easier to follow. This illustrates the significant difference between the Erd\H{o}s distance problem and our discrete $\Z^d$-variant of the Erd\H{o}s-Falconer distance problem.

Here, we make a comparison\footnote{In Section \ref{M}, we will compare the elementary sphere bound with the finite Erd\H{o}s distance bound.} between the $\ell^p$-improving bound and the finite Erd\H{o}s distance bound in Theorem \ref{mainthm2}: one can see that the $\ell^p$-improving bound is better than the finite Erd\H{o}s distance bound when $\theta\geq\theta_{\text{stronger}}^{(\ell^p,\mathrm{Erd})}$ or equivalently $\dim_{\mathcal{H}}^{\text{dis}}(A)\geq d_{\text{stronger}}^{(\ell^p,\mathrm{Erd})}$, where
$$
\theta_{\text{stronger}}^{(\ell^p,\mathrm{Erd})}=\frac{(d-2)(d+1)}{d^2+d+2c_d-2}
\quad\quad\quad
d_{\text{stronger}}^{(\ell^p,\mathrm{Erd})}=\frac{(d-2)(d^2+2d+c_d)}{d^2+d+2c_d-2}
$$
Additionally, the finite Erd\H{o}s distance bound is inapplicable when $\theta\geq\theta_{\text{inapplicable}}^{\text{Erd}}$, where
$$
\theta_{\text{inapplicable}}^{\text{Erd}}=\frac{d(d+1)}{d^2+2d+c_d}
$$
but in the meantime, the $\ell^p$-improving bound is applicable when $\theta\in[0,1)$ and hence can fill this gap.
\def\dVal{5}


\pgfmathsetmacro{\cVal}{
    -3 + 4*(1-mod(\dVal,2))/(\dVal*\dVal)
}

\pgfmathsetmacro{\CErd}{
    (\dVal*\dVal + 2*\dVal + \cVal)/(\dVal + 1)
}

\pgfmathsetmacro{\lpSlope}{
    (\dVal + 2)/2
}
\pgfmathsetmacro{\lpIntercept}{
    (\dVal - 2)/2
}

\pgfmathsetmacro{\thetaStronger}{
    (\dVal - 2)*(\dVal + 1)/
    (\dVal*\dVal + \dVal + 2*\cVal - 2)
}

\pgfmathsetmacro{\dStronger}{
    \CErd*\thetaStronger
}

\pgfmathsetmacro{\thetaInapp}{
    \dVal/\CErd
}

\pgfmathsetmacro{\yCross}{
    \CErd*\thetaStronger
}

\pgfmathsetmacro{\yCrossTwo}{
    \dStronger/\CErd
}

\pgfmathsetmacro{\yTopOne}{
    max(\dVal+0.7,1.03*\CErd+0.2)
}
\pgfmathsetmacro{\xTopTwo}{
    \dVal + 0.25
}
\pgfmathsetmacro{\yTopTwo}{
    1.05
}

\pgfplotsset{
    comparison/.style={
        width=12cm,
        height=8cm,
        axis lines=left,
        axis line style={black},
        axis on top,
        clip=true,
        tick label style={font=\small},
        label style={font=\small},
        legend style={
            at={(0.05,0.95)},
            anchor=north west,
            draw=black,
            fill=white,
            fill opacity=1,
            text opacity=1,
            font=\small,
            legend columns=1
        },
        legend cell align=left,
        enlarge x limits=false
    }
}

\tikzset{
    lpcurve/.style={
        blue!85!black,
        thick,
    },
    erdoscurve/.style={
        red!85!black,
        thick,
    },
    conjecturecurve/.style={
        green!50!black,
        thick,
    },
    crossover/.style={
        gray,
    },
    inapp/.style={
        gray,
        thick
    },
    dimcap/.style={
        black,
        thin
    }
}
\begin{figure}[H]
\centering
\begin{tikzpicture}
\begin{axis}[
    comparison,
    xmin=0,
    xmax=1.03,
    ymin=0,
    ymax=\yTopOne,
    xlabel={$\theta$},
    ylabel={Sufficient threshold for $\dim_{\mathcal{H}}^{\mathrm{dis}}(A)$},
    xtick={0,1},
    xticklabels={$0$,$1$},
    ytick={0,\dVal},
    yticklabels={$0$,$d$},
    extra x ticks={\thetaStronger,\thetaInapp},
    extra x tick labels={
        $\theta_{\text{stronger}}^{(\ell^p,\mathrm{Erd})}$,
        $\theta_{\text{inapplicable}}^{\text{Erd}}$
    },
    extra x tick style={
        tick label style={
            rotate=90,
            anchor=east,
            font=\scriptsize,
            yshift=-2pt
        }
    }
]

\addplot[
    erdoscurve,
    domain=0:1.03,
    samples=2
] {\CErd*x};
\addlegendentry{Finite Erd\H{o}s distance bound}

\addplot[
    lpcurve,
    domain=0:1,
    samples=2
] {\lpSlope*x+\lpIntercept};
\addlegendentry{$\ell^p$-improving bound}

\addplot[
    conjecturecurve,
    domain=0:1,
    samples=2
] {\dVal*x};
\addlegendentry{Our Conjecture \ref{conj}}

\addplot[
    dimcap,
    domain=0:1.03,
    samples=2
] {\dVal};
\addlegendentry{Dimension cap}

\addplot[
    crossover,
    dashed,
    forget plot
] coordinates {
    (\thetaStronger,0)
    (\thetaStronger,\yCross)
};
\addplot[
    inapp,
    dotted,
    forget plot
] coordinates {
    (\thetaInapp,0)
    (\thetaInapp,\dVal)
};

\addplot[
    only marks,
    mark=*,
    mark size=2.2pt,
    black,
    forget plot
] coordinates {
    (\thetaStronger,\yCross)
    (\thetaInapp,\dVal)
};

\end{axis}
\end{tikzpicture}
\caption{
Comparison of the finite Erd\H{o}s distance bound, the $\ell^p$-improving bound,
our Conjecture \ref{conj} for $d=5$.
}
\label{fig:new-figure1}
\end{figure}
\begin{figure}[H]
\centering
\begin{tikzpicture}
\begin{axis}[
    comparison,
    xmin=0,
    xmax=\xTopTwo,
    ymin=0,
    ymax=\yTopTwo,
    xlabel={$\dim_{\mathcal{H}}^{\mathrm{dis}}(A)$},
    ylabel={Lower bounds for $\dim_{\mathcal{UM}}^{\mathrm{dis}}(\Delta^2(A))$},
    xtick={0,\dVal},
    xticklabels={$0$,$d$},
    ytick={0,1},
    yticklabels={$0$,$1$},
    extra x ticks={\dStronger},
    extra x tick labels={$d_{\text{stronger}}^{(\ell^p,\mathrm{Erd})}$},
    extra x tick style={
        tick label style={
            rotate=90,
            anchor=east,
            font=\scriptsize,
            yshift=-2pt
        }
    }
]

\addplot[
    erdoscurve,
    domain=0:\dVal,
    samples=2
] {x/\CErd};
\addlegendentry{Finite Erd\H{o}s distance bound}

\addplot[
    lpcurve,
    domain=0:\lpIntercept,
    samples=2
] {0};
\addlegendentry{$\ell^p$-improving bound}

\addplot[
    lpcurve,
    forget plot,
    domain=\lpIntercept:\dVal,
    samples=2
] {(2*x-\dVal+2)/(\dVal+2)};

\addplot[
    conjecturecurve,
    domain=0:\dVal,
    samples=2
] {x/\dVal};
\addlegendentry{Our Conjecture \ref{conj}}

\addplot[
    dimcap,
    domain=0:\dVal,
    samples=2
] {1};
\addlegendentry{Dimension cap}

\addplot[
    dimcap,
    forget plot
] coordinates {
    (\dVal,0)
    (\dVal,1)
};

\addplot[
    crossover,
    dashed,
    forget plot
] coordinates {
    (\dStronger,0)
    (\dStronger,\yCrossTwo)
};

\addplot[
    only marks,
    mark=*,
    mark size=2.2pt,
    black,
    forget plot
] coordinates {
    (\dStronger,\yCrossTwo)
};

\end{axis}
\end{tikzpicture}
\caption{
Comparison of the lower bounds for
$\dim_{\mathcal{UM}}^{\mathrm{dis}}(\Delta^2(A))$
obtained from the finite Erd\H{o}s distance bound,
the $\ell^p$-improving bound, and our Conjecture \ref{conj} for $d=5$.
}
\label{fig:new-figure2}
\end{figure}
Similarly, inspired by the Erd\H{o}s and Falconer pinned distance problems, their variants, and Magyar's discrete analog of Furstenberg, Katznelson, and Weiss's result, we also pose the following discrete $\Z^d$-variant of the Erd\H{o}s-Falconer pinned distance problem:
\begin{pr}\label{pp1}
    Fix $\theta\in[0,1]$. How large does the discrete Hausdorff dimension $\dim_{\mathcal{H}}^{\text{dis}}(A)$ of a subset $A$ of $\Z^d$ {need to be to ensure that} there exists $x\in A$ such that the pinned distance set $\Delta^2(A,x)$ has positive $\theta$-upper natural density?
\end{pr}
Likewise, there is also an equivalent problem:
\begin{pr}\label{pp2}
Can we find a lower bound\footnote{No nontrivial upper bound for $\dim_{\mathcal{UM}}^{\mathrm{dis}}\big(\Delta^2(A,x)\big)$ can depend solely on $\dim_{\mathcal H}^{\mathrm{dis}}(A)$, even if one is allowed to choose the pin $x\in A$; see Example \ref{manypin}.} dependent on the discrete Hausdorff dimension $\dim_{\mathcal{H}}^{\text{dis}}(A)$ of a subset $A$ of $\Z^d$ for the upper discrete mass dimension of $\Delta^2(A,x)$ for some $x\in A$?
\end{pr}
Again, based on our sharp example (Example \ref{sharp}) in low dimensions by mixing Erd\H{o}s's and Falconer's sharp examples, we also propose the following pinned conjecture:
\begin{conj}\label{conjpin}
    Let $A\subseteq\Z^d$. Then there exists $x\in A$ such that\footnote{Note that Conjecture \ref{conjpin} is applicable only when $\theta\in\left[0,\frac{1}{2}\right)$ if $d=1$ and $\theta\in[0,1)$ if $d\geq2$. This is similar to the fact that the Falconer pinned conjecture fails trivially for $d=1$.}
    $$
    \dim_{\mathcal{H}}^{\text{dis}}(A)>
    \left\{
    \begin{array}{ll}
          2\theta &\text{if }d=1\\
         d\theta &\text{if }d\geq2 
    \end{array}
    \right.
    \Longrightarrow \mathcal{N}_{\theta}^{*}\big(\Delta^2(A,x)\big)>0
    $$
    or equivalently,
    $$
    \dim_{\mathcal{UM}}^{\text{dis}}\big(\Delta^2(A,x)\big)\geq
    \left\{
    \begin{array}{ll}
        \frac{\dim_{\mathcal{H}}^{\text{dis}}(A)}{2} &\text{if }d=1 \vspace{0.1cm}\\
        \frac{\dim_{\mathcal{H}}^{\text{dis}}(A)}{d} &\text{if }d\geq2 
    \end{array}
    \right.
    $$
\end{conj}
One can see this pinned conjecture is similar to the Falconer pinned conjecture if $\theta=\frac{1}{2}$. We can interpret $\mathcal{N}_{\frac{1}{2}}^{*}\big(\Delta^2(A,x)\big)>0$ as the fact that $\Delta^2(A,x)$ has the same (or greater) proportion to the size of the set of perfect squares.

Our second main theorem can partially solve Problem \ref{pp1} and Problem \ref{pp2}:
\begin{thm}\label{pinmainthm2}
Let $A\subseteq\Z^d$ and $0\leq\theta\leq1$. If $d=1$, then
$$
\dim_{\mathcal{H}}^{\text{dis}}(A)>2\theta\Longrightarrow \mathcal{N}_{\theta}^{*}\big(\Delta^2(A,x)\big)>0\text{ for all }x\in A
$$
If $d\geq2$, then\footnote{We will actually prove a stronger statement with the exceptional set of bad pins $E_{\theta}(A)=\left\{x\in A: \mathcal{N}_{\theta}^{*}\big(\Delta^2(A,x)\big)=0\right\}$.}\footnote{For \(\theta<\frac12\), the threshold \(2(d-1)\theta\) guarantees a good pin, while the larger threshold \(d-2+2\theta\) guarantees that every pin is good. The two thresholds coincide when \(d=2\).}
$$
\dim_{\mathcal{H}}^{\text{dis}}(A)>
\left\{
\begin{array}{l}
    2\left(d-1\right)\theta \\
    d-2+2\theta 
\end{array}
\right.
\Longrightarrow
\left\{
\begin{array}{l}
     \mathcal{N}_{\theta}^{*}\big(\Delta^2(A,x)\big)>0\text{ for some }x\in A \\
     \mathcal{N}_{\theta}^{*}\big(\Delta^2(A,x)\big)>0\text{ for all }x\in A
\end{array}
\right.
$$
or equivalently, if $d=1$, then\footnote{We will actually show that $\dim_{\mathcal{UM}}^{\text{dis}}\big(\Delta^2(A,x)\big)=\frac{\dim_{\mathcal{UM}}^{\text{dis}}(A)}{2}$ for all $x\in A$ when $d=1,2$.}
$$
\dim_{\mathcal{UM}}^{\text{dis}}\big(\Delta^2(A,x)\big)\geq\frac{\dim_{\mathcal{H}}^{\text{dis}}(A)}{2}\text{ for all }x\in A
$$
If $d\geq2$, then\footnote{The lower bound \(\frac{\dim_{\mathcal{H}}^{\mathrm{dis}}(A)}{2(d-1)}\) holds for some pin, while \(\frac{\dim_{\mathcal{H}}^{\mathrm{dis}}(A)-(d-2)}{2}\) holds for every pin. For \(d\ge3\), the former is larger when \(\dim_{\mathcal{H}}^{\mathrm{dis}}(A)<d-1\), and the latter is larger when \(\dim_{\mathcal{H}}^{\mathrm{dis}}(A)>d-1\). The two bounds coincide when \(\dim_{\mathcal{H}}^{\mathrm{dis}}(A)=d-1\) or \(d=2\).}
$$
\dim_{\mathcal{UM}}^{\text{dis}}\big(\Delta^2(A,x)\big)\geq\left\{
\begin{array}{l}
    \frac{\dim_{\mathcal{H}}^{\text{dis}}(A)}{2(d-1)}\text{ for some }x\in A \\
    \frac{\dim_{\mathcal{H}}^{\text{dis}}(A)-(d-2)}{2}\text{ for all }x\in A 
\end{array}
\right.
$$
In particular, Conjecture \ref{conjpin} is true for $d=1,2$.
\end{thm}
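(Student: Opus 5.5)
The plan is to prove everything with $\dim_{\mathcal{UM}}^{\text{dis}}(A)$ in place of $\dim_{\mathcal{H}}^{\text{dis}}(A)$, and then use the standard inequality $\dim_{\mathcal{H}}^{\text{dis}}(A)\le\dim_{\mathcal{UM}}^{\text{dis}}(A)$. The density statements follow from the dimension statements by the equivalence noted after Conjecture \ref{conjpin}: $\dim_{\mathcal{UM}}^{\text{dis}}(\Delta^2(A,x))>\theta$ forces $\mathcal{N}_\theta^{*}(\Delta^2(A,x))>0$. Write $s=\dim_{\mathcal{UM}}^{\text{dis}}(A)$ and fix $\epsilon>0$. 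Choose $N_j\to\infty$ with $|A\cap Q_{N_j}|\ge N_j^{s-\epsilon}$. For a pin $x$ (with $|x|\lesssim N_j$, which holds for fixed $x$ once $N_j$ is large), every $y\in A\cap Q_{N_j}$ satisfies $\|x-y\|^2\le C_dN_j^2$. So the relevant scale for $\Delta^2(A,x)$ is $M_j=C_dN_j^2$, and an exponent $\beta$ at scale $N_j$ becomes $\beta/2$ at scale $M_j$.

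\emph{Every pin.} Fix $x\in A$. The fibres of $y\mapsto\|x-y\|^2$ are lattice points on spheres of squared radius $r\le M_j$. Their number is at most $2$ for $d=1$. For $d\ge2$, the elementary sphere bound gives at most $\lesssim_\epsilon r^{\frac{d-2}{2}+\epsilon}$: the divisor bound for $d=2$, and the classical $r_d(r)\lesssim r^{\frac{d-2}{2}+\epsilon}$ for $d\ge3$. Hence
$$
N_j^{s-\epsilon}\le |A\cap Q_{N_j}|\lesssim_\epsilon \bigl|\Delta^2(A,x)\cap[0,M_j]\bigr|\, N_j^{d-2+2\epsilon}.
$$
This gives $|\Delta^2(A,x)\cap[0,M_j]|\gtrsim M_j^{(s-(d-2)-3\epsilon)/2}$, so $\dim_{\mathcal{UM}}^{\text{dis}}(\Delta^2(A,x))\ge\frac{s-(d-2)}{2}$. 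This yields the $d=1$ statement (with $d-2$ replaced by $0$), the $d=2$ statement, and the "for all $x$" bound for $d\ge2$. A matching upper bound $\le s/2$ for $d=1,2$ is immediate from $\Delta^2(A,x)\cap[0,M_j]$ being controlled by $|A\cap Q_{CN_j}|$, which gives the footnoted equality.

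\emph{Some pin, $d\ge3$.} I will use $d-1$ pins simultaneously. First assume $A$ is not contained in any affine hyperplane. If it is, then $A$ lives in a rational affine subspace of dimension $d'<d$, and I expect to induct on dimension, since $s/(2(d'-1))\ge s/(2(d-1))$. Under this assumption, choose affinely independent $x_1,\dots,x_{d-1}\in A$. The map $y\mapsto(\|x_1-y\|^2,\dots,\|x_{d-1}-y\|^2)$ has fibres contained in the intersection of $d-1$ spheres, and subtracting the equations pairwise shows this is a circle. The circle lies in a $2$-dimensional affine plane $P$ defined over $\Q$ and is centred at a rational point. Lattice points of $P\cap\Z^d$ form a translate of a rank-$2$ lattice, so the fibre is the set of representations of an integer of size $\lesssim N_j^{O(1)}$ by a positive definite binary quadratic form. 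By the divisor bound, this is $\lesssim_\epsilon N_j^{\epsilon}$ uniformly. Therefore
$$
N_j^{s-\epsilon}\lesssim_\epsilon N_j^{\epsilon}\prod_{i=1}^{d-1}\bigl|\Delta^2(A,x_i)\cap[0,M_j]\bigr|,
$$
so for each $j$ some index $i(j)$ has $|\Delta^2(A,x_{i})\cap[0,M_j]|\gtrsim N_j^{(s-2\epsilon)/(d-1)}=M_j^{(s-2\epsilon)/(2(d-1))}$. By pigeonhole, one index $i$ occurs for infinitely many $j$. Since $\epsilon$ is arbitrary (for $d-1$ fixed pins chosen independently of $\epsilon$), the upper limit gives $\dim_{\mathcal{UM}}^{\text{dis}}(\Delta^2(A,x_i))\ge\frac{s}{2(d-1)}$. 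To make the pin independent of $\epsilon$, I will take a sequence $\epsilon_k\to0$ and pigeonhole once more over the finitely many pins.

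The main obstacle I expect is the uniform $N^\epsilon$ bound for lattice points on the circle. This needs care with the rational structure of $P$: the Gram matrix of the lattice $P\cap\Z^d$ and the denominator of the centre both depend only on the fixed pins, so after clearing a fixed denominator the fibre count reduces to $r_Q(n)\lesssim_\epsilon n^\epsilon$ for a fixed integral binary form $Q$. The hyperplane reduction and induction on dimension is the secondary technical point.
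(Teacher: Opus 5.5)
Your proposal is correct and is essentially the paper's proof. Your every-pin bound is the $K=0$ case of Lemma \ref{ptl}, with Remark \ref{ptrk} covering $d=1$. Your some-pin bound is the $K=r-2$ case, where $r=\dim_{\R}(\mathrm{aff}(A))$: it uses $r-1$ affinely independent pins whose joint fibres are lattice points on circles. You argue directly by pigeonhole where the paper argues by contradiction with bad pins, and both routes then pass to $\dim_{\mathcal{H}}^{\text{dis}}$ via $\dim_{\mathcal{H}}^{\text{dis}}\le\dim_{\mathcal{UM}}^{\text{dis}}$.

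The one thing to change is the degenerate case. Do not literally ``induct on dimension'': identifying $\mathrm{aff}(A)\cap\Z^d$ with $\Z^{d'}$ turns the norm into a general integral quadratic form (cf.\ Remark \ref{affrk}). Instead, run the same argument inside $\mathrm{aff}(A)$ with $r-1$ pins, which your binary-form lattice-point count already covers, and use the two-to-one argument when $r=1$.
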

\begin{remark}\label{upperboundpin}
We note the following elementary upper bound\footnote{Example \ref{sharp} shows that this bound is sharp for every pin in dimensions \(d=1,2\), and also in dimension \(d=3\) when \(0<\theta\leq \frac{1}{2}\).} for pinned distance sets: for every $A\subseteq\Z^d$ and every $x\in A$, one has
$$
\dim_{\mathcal{UM}}^{\text{dis}}\big(\Delta^2(A,x)\big)\leq\min\left\{\frac{\dim_{\mathcal{UM}}^{\text{dis}}(A)}{2},1\right\}
$$
\end{remark}
\begin{remark}
Note that pinned distance results can imply the same unpinned distance results; however, the bounds in Theorem \ref{mainthm2} are better than the bounds in Theorem \ref{pinmainthm2}.
\end{remark}
\begingroup
\def\PinnedCmpDimension{5} 

\pgfmathsetmacro{\PinnedCmpSomeSlope}{2*(\PinnedCmpDimension-1)}
\pgfmathsetmacro{\PinnedCmpAllIntercept}{\PinnedCmpDimension-2}
\pgfmathsetmacro{\PinnedCmpThetaCap}{
    \PinnedCmpDimension/\PinnedCmpSomeSlope
}
\pgfmathsetmacro{\PinnedCmpCrossDimension}{\PinnedCmpDimension-1}
\pgfmathsetmacro{\PinnedCmpThresholdTop}{
    max(\PinnedCmpDimension+0.7,1.03*\PinnedCmpSomeSlope+0.3)
}
\pgfmathsetmacro{\PinnedCmpDimensionRight}{\PinnedCmpDimension+0.25}

\pgfplotsset{
    pinnedcmpaxis/.style={
        width=12cm,
        height=8cm,
        axis lines=left,
        axis line style={black},
        axis on top,
        clip=true,
        tick label style={font=\small},
        label style={font=\small},
        legend style={
            at={(0.04,0.96)},
            anchor=north west,
            draw=black,
            fill=white,
            fill opacity=1,
            text opacity=1,
            font=\small,
            legend columns=1
        },
        legend cell align=left,
        enlarge x limits=false
    },
    pinnedcmpsome/.style={red!85!black,thick},
    pinnedcmpall/.style={blue!85!black,thick},
    pinnedcmpconjecture/.style={green!50!black,thick},
    pinnedcmpcap/.style={black,thin},
    pinnedcmpguide/.style={gray,dashed,forget plot},
    pinnedcmpmarker/.style={
        only marks,mark=*,mark size=2.2pt,black,forget plot
    }
}

\begin{figure}[H]
\centering
\begin{tikzpicture}
\begin{axis}[
    pinnedcmpaxis,
    xmin=0,xmax=1.03,
    ymin=0,ymax=\PinnedCmpThresholdTop,
    xlabel={$\theta$},
    ylabel={Sufficient threshold for $\dim_{\mathcal H}^{\mathrm{dis}}(A)$},
    xtick={0,0.5,1},
    xticklabels={$0$,$\frac12$,$1$},
    ytick={0,\PinnedCmpDimension},
    yticklabels={$0$,$d$}
]

\addplot[pinnedcmpsome,domain=0:\PinnedCmpThetaCap,samples=2]
    {\PinnedCmpSomeSlope*x};
\addlegendentry{Some pin: $2(d-1)\theta$}

\addplot[pinnedcmpall,domain=0:1,samples=2]
    {\PinnedCmpAllIntercept+2*x};
\addlegendentry{Every pin: $d-2+2\theta$}

\addplot[pinnedcmpconjecture,domain=0:1,samples=2]
    {\PinnedCmpDimension*x};
\addlegendentry{Our pinned Conjecture \ref{conjpin}: $d\theta$}

\addplot[pinnedcmpcap,domain=0:1.03,samples=2]
    {\PinnedCmpDimension};
\addlegendentry{Dimension cap}

\addplot[pinnedcmpguide] coordinates {
    (0.5,0) (0.5,\PinnedCmpCrossDimension)
};
\addplot[pinnedcmpmarker] coordinates {
    (0.5,\PinnedCmpCrossDimension)
};

\end{axis}
\end{tikzpicture}
\caption{Comparison of the sufficient dimension thresholds in our Theorem \ref{pinmainthm2}, the threshold
in our pinned Conjecture \ref{conjpin}, and the dimension cap, for
$d=\PinnedCmpDimension$.}
\label{fig:pinned-comparison-density}
\end{figure}

\begin{figure}[H]
\centering
\begin{tikzpicture}
\begin{axis}[
    pinnedcmpaxis,
    xmin=0,xmax=\PinnedCmpDimensionRight,
    ymin=0,ymax=1.05,
    xlabel={$\dim_{\mathcal H}^{\mathrm{dis}}(A)$},
    ylabel={Lower bounds for
        $\dim_{\mathcal{UM}}^{\mathrm{dis}}(\Delta^2(A,x))$},
    xtick={0,\PinnedCmpDimension},
    xticklabels={$0$,$d$},
    ytick={0,0.5,1},
    yticklabels={$0$,$\frac12$,$1$},
    extra x ticks={\PinnedCmpAllIntercept,\PinnedCmpCrossDimension},
    extra x tick labels={$d-2$,$d-1$}
]

\addplot[pinnedcmpsome,domain=0:\PinnedCmpDimension,samples=2]
    {x/\PinnedCmpSomeSlope};
\addlegendentry{Some pin}

\addplot[pinnedcmpall,domain=0:\PinnedCmpAllIntercept,samples=2]
    {0};
\addlegendentry{Every pin}
\addplot[pinnedcmpall,forget plot,
    domain=\PinnedCmpAllIntercept:\PinnedCmpDimension,samples=2]
    {(x-\PinnedCmpAllIntercept)/2};

\addplot[pinnedcmpconjecture,domain=0:\PinnedCmpDimension,samples=2]
    {x/\PinnedCmpDimension};
\addlegendentry{Our pinned Conjecture \ref{conjpin}}

\addplot[pinnedcmpcap,domain=0:\PinnedCmpDimension,samples=2] {1};
\addlegendentry{Dimension cap}
\addplot[pinnedcmpcap,forget plot] coordinates {
    (\PinnedCmpDimension,0) (\PinnedCmpDimension,1)
};

\addplot[pinnedcmpguide] coordinates {
    (\PinnedCmpCrossDimension,0) (\PinnedCmpCrossDimension,0.5)
};
\addplot[pinnedcmpmarker] coordinates {
    (\PinnedCmpCrossDimension,0.5)
};

\end{axis}
\end{tikzpicture}
\caption{Comparison of the pinned dimension lower bounds in our Theorem \ref{pinmainthm2} $\frac{\dim_{\mathcal H}^{\mathrm{dis}}(A)}{2(d-1)}$ for some pin and
$\frac{\dim_{\mathcal H}^{\mathrm{dis}}(A)-(d-2)}{2}$
for every pin, the conjectured lower bound
$\frac{\dim_{\mathcal H}^{\mathrm{dis}}(A)}{d}$, and the dimension cap,
for $d=\PinnedCmpDimension$.}
\label{fig:pinned-comparison-dimension}
\end{figure}

\endgroup
\subsection{Organization}
Section \ref{M} introduces generalized densities and discrete dimensions, discusses affine independence and dimension, and states our main results in a more general setting, including estimates for the exceptional set of pins, together with an analysis of the relative strengths of the resulting bounds. Section \ref{MT} reviews the main tools used in this paper, including elementary integer sphere counting, \(\ell^p\)-improving inequalities for discrete spherical averages, and results on the Erd\H{o}s distinct distances problem. Section \ref{P} provides the proofs of our main theorems for both the unpinned and pinned distance problems. Section \ref{E} presents examples illustrating the sharpness of both conjectures in low dimensions and the different roles of discrete Hausdorff dimension, upper discrete mass dimension, and upper discrete Banach dimension in our unpinned and pinned distance problems. These examples also show that none of these dimensions alone provides a nontrivial upper bound for the upper discrete mass dimension of unpinned distance sets, while discrete Hausdorff dimension alone is insufficient in the pinned setting. Section \ref{D} explores connections between our unpinned and pinned problems and several related results and techniques, including \(\ell^p\)-improving inequalities for discrete spherical averages, finite unpinned and pinned distance problems over the reals and finite fields, homogeneity, \(s\)-adaptability, and discrete analogs of Frostman's theory and energy integrals. We discuss the limitations of these approaches in our setting, including the difficulty of obtaining a fixed pin from finite pinned distance estimates. Finally, Section \ref{A} establishes the lower bound for the discrete Hausdorff dimension of Example \ref{sharp} in all dimensions using discrete capacity estimates, completing the analysis of our sharp example in low dimensions.
\section{Main Results}\label{M}
\subsection{Densities}
To measure subsets of $\Z^d$, the notions of densities are essential. There is some similarity between densities and measures such as subadditivity and translation invariance; see \cite{buck1946measure,leonetti2020notions} for more on their similarity. Motivated by the fractal structure of subsets of $\Z^d$, we generalize not only the upper natural density but also the upper Banach density:

{We recall that $Q_N$ is the discrete cube defined by
$$
Q_N=\left[-\frac{N}{2},\frac{N}{2}\right]^d\cap\Z^d
$$}
\begin{defn}
    Let $d\in\N$ and $\theta\in[0,1]$. The $\bm{\theta}$\textbf{-upper natural density} of $A\subseteq\Z^d$ is defined by
    $$
    \mathcal{N}_{\theta}^{*}(A)=\limsup_{N\to\infty}\frac{|A\cap Q_N|}{|Q_N|^{\theta}}
    $$
\end{defn}
One can see that the $\theta$-upper natural density is the usual upper natural density when $\theta=1$. We can use the $\theta$-upper natural density to define the upper discrete mass dimension of $A$.
\begin{defn}
    Let $d\in\N$ and $\theta\in[0,1]$. The $\bm{\theta}$\textbf{-upper Banach density} of $A\subseteq\Z^d$ is defined by
    $$
    \delta_{\theta}^{*}(A)=\limsup_{N\to\infty}\sup_{t\in\Z^d}\frac{|A\cap(t+Q_N)|}{|Q_N|^{\theta}}
    $$
\end{defn}
One can see that the $\theta$-upper Banach density is the usual upper Banach density when $\theta=1$. We will make use of the $\theta$-upper Banach density to define a weaker notion of upper discrete mass dimension.
\subsection{Discrete Dimensions}
To study the fractal structure of subsets of $\Z^d$, the language of discrete dimensions is useful. There are extensive and systematic studies on discrete dimensions in the literature (see, e.g., \cite{barlow1992defining,bishop2017fractals,glasscock2024additive}), with numerous applications—particularly in probability theory, such as Brownian motion and Markov chains (see \cite{bishop2017fractals} for a detailed discussion of their relationship). We recall the upper discrete mass dimension and the discrete Hausdorff dimension here:
\begin{defn}
    Let $d\in\N$ and $A\subseteq\Z^d$. The \textbf{upper discrete mass dimension} of $A$ is defined as
    $$
    \dim_{\mathcal{UM}}^{\text{dis}}(A)=d\cdot\sup\left\{\theta\in[0,1]:\mathcal{N}_{\theta}^{*}(A)>0\right\}
    $$
\end{defn}
\begin{defn}
    Let $d\in\N$ and $A\subseteq\Z^d$. The \textbf{discrete Hausdorff dimension} of $A$ is defined as
    $$
    \dim_{\mathcal{H}}^{\text{dis}}(A)=\inf\big\{\alpha>0:\mathscr{H}_{\alpha}(A)<\infty\big\}
    $$
    where $\mathscr{H}_{\alpha}(A)$ is the \textbf{discrete Hausdorff content} of $A$ defined by
    $$
    \mathscr{H}_{\alpha}(A)=\inf\sum_{Q}\left(\frac{\text{diam}(Q)}{\displaystyle\max_{x\in Q}\|x\|}\right)^{\alpha}
    $$
    where the infimum is over all coverings of $A$ by discrete cubes $Q$.
\end{defn}
There is an elementary property in \cite{barlow1992defining}:
\begin{prop}\label{pfd}
    For every $A\subseteq\Z^d$, one has $\dim_{\mathcal{H}}^{\text{dis}}(A)\leq\dim_{\mathcal{UM}}^{\text{dis}}(A)$.
\end{prop}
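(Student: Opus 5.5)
We need to prove $\dim_{\mathcal H}^{\text{dis}}(A)\le\dim_{\mathcal{UM}}^{\text{dis}}(A)$. The plan is to show that any exponent $\alpha$ strictly larger than the upper mass dimension makes the discrete Hausdorff content finite. We use a dyadic-shell covering by unit cubes.

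\textbf{Setup.} Fix $\alpha>\dim_{\mathcal{UM}}^{\text{dis}}(A)$, and choose $\beta$ with $\dim_{\mathcal{UM}}^{\text{dis}}(A)<\beta<\alpha$. Put $\theta=\beta/d$, which we may assume is at most $1$; otherwise $\alpha>d$, and we treat that case separately below. By the definition of $\dim_{\mathcal{UM}}^{\text{dis}}$, every $\theta'$ with $\theta'>\dim_{\mathcal{UM}}^{\text{dis}}(A)/d$ satisfies $\mathcal N^*_{\theta'}(A)=0$. In particular $\mathcal N^*_\theta(A)=0$, so $|A\cap Q_N|\le |Q_N|^\theta\lesssim N^{\beta}$ for all large $N$. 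Since $A\cap Q_N$ is finite for each $N$, enlarging the constant gives $|A\cap Q_N|\lesssim_A N^\beta$ for all $N\ge1$.

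\textbf{Covering.} Decompose $\Z^d$ into the shells $S_k=Q_{2^{k+1}}\setminus Q_{2^k}$ for $k\ge0$, together with the finite set $Q_1$. Cover each point $x\in A\cap S_k$ by the singleton cube $\{x\}$, or by a unit cube containing $x$, according to the convention for $\text{diam}(Q)$. Such a cube has diameter $\approx1$, and $\max_{y\in Q}\|y\|\approx 2^k$. Its contribution to the content sum is therefore $\lesssim 2^{-k\alpha}$. The finitely many points near the origin contribute a finite amount; if necessary we remove them, since a finite set does not affect finiteness of the content. Summing over shells,
$$\mathscr H_\alpha(A)\lesssim C+\sum_{k\ge0}|A\cap Q_{2^{k+1}}|\,2^{-k\alpha}\lesssim C+\sum_{k}2^{(k+1)\beta}2^{-k\alpha}<\infty,$$
because $\beta<\alpha$. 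Hence $\dim_{\mathcal H}^{\text{dis}}(A)\le\alpha$. Letting $\alpha$ decrease to $\dim_{\mathcal{UM}}^{\text{dis}}(A)$ gives the proposition.

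\textbf{The case $\alpha>d$.} Here the trivial bound $|A\cap Q_N|\lesssim N^d$ replaces $N^\beta$, and the same geometric series converges. This also shows that $\dim_{\mathcal H}^{\text{dis}}\le d$ always.

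\textbf{Main obstacle.} The only delicate point is the convention for $\text{diam}(Q)$ on single-point cubes and for the point $0$, where the ratio in the content is undefined. If singletons have diameter $0$, then the content of a single point vanishes. In that case one should instead use cubes of side $1$, with diameter $\approx\sqrt d$. Either convention yields a contribution comparable to $\|x\|^{-\alpha}$, so the argument is unaffected up to constants. Thus the essential content of the proof is the summability estimate $\sum_{x\in A\setminus\{0\}}\|x\|^{-\alpha}<\infty$, which follows from the polynomial growth bound via dyadic summation.
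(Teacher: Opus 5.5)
Your argument is correct. The paper gives no proof of its own and only cites Barlow--Taylor; your argument is exactly the standard one behind that citation: cover each point of $A$ by a unit cube at cost $\lesssim\|x\|^{-\alpha}$, then sum over dyadic shells against the growth bound $|A\cap Q_N|\lesssim N^{\beta}$ with $\beta<\alpha$, working directly with the paper's $\max_{x\in Q}\|x\|$ normalization. Two slips are cosmetic only: a diameter-zero singleton gives contribution $0$ rather than one comparable to $\|x\|^{-\alpha}$, which is harmless since only the upper bound is used; and the origin should be covered, e.g.\ by $\{0,1\}^d$ at cost $1$, rather than removed.
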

Now we define a translation-uniform counterpart of the upper discrete mass dimension based on the $\theta$-upper Banach density to state our main results.
\begin{defn}
    Let $d\in\N$ and $A\subseteq\Z^d$. The \textbf{upper discrete Banach dimension}\footnote{For subsets of $\Z^d$, the upper discrete Banach dimension coincides with the counting dimension studied in \cite{lima2014marstrand,glasscock2016marstrand,pathak2020marstrand}. We use this terminology to emphasize its formulation in terms of generalized upper Banach densities.} of $A$ is defined by
    \begin{align*}
        \dim_{\mathcal{UB}}^{\text{dis}}(A)&=d\cdot\sup\big\{\theta\in[0,1]:\delta_{\theta}^{*}(A)>0\big\}
    \end{align*}
\end{defn}
\begin{remark}\label{dr}
By definition, for every $A\subseteq\Z^d$, one has $\dim_{\mathcal{UM}}^{\text{dis}}(A)\leq\dim_{\mathcal{UB}}^{\text{dis}}(A)$. Together with Proposition \ref{pfd}, one has
\begin{equation}\label{inee}
\dim_{\mathcal{H}}^{\text{dis}}(A)\leq\dim_{\mathcal{UM}}^{\text{dis}}(A)\leq\dim_{\mathcal{UB}}^{\text{dis}}(A)
\end{equation}
See the below Example \ref{peg} and Example \ref{manypin}. One can find that the inequalities in \eqref{inee} are possibly strict for some particular $A\subseteq\Z^d$.
\end{remark}
{\subsection{Affine Independence and Dimension}For $E\subseteq\R^d$, let $\mathrm{aff}(E)$ denote the smallest affine subspace of $\R^d$ containing $E$, and let $\dim_{\R}\left(\mathrm{aff}(E)\right)$ denote its \textbf{affine dimension} in linear algebra. Points $x_0,\ldots,x_m\in\R^d$ are said to be \textbf{affinely independent} if and only if $x_1-x_0,\ldots,x_m-x_0$ are linearly independent.
\begin{prop}
    For $E\subseteq\R^d$, one has
    $$
    \dim_{\R}\left(\mathrm{aff}(E)\right)\geq m\Longleftrightarrow E\text{ contains }m+1\text{ affinely independent points.}
    $$
\end{prop}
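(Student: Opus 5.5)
The plan is to prove the two implications separately, working directly from the definitions of affine hull and affine independence. Write $V=\mathrm{aff}(E)$. Recall that an affine subspace has the form $V=x_0+W$ for a linear subspace $W$, and that $\dim_{\R}(V)=\dim_{\R}(W)$. The convention $\dim_{\R}(\mathrm{aff}(\emptyset))=-1$ makes the case $m=0$ trivial, so we may assume $E\neq\emptyset$ and fix $x_0\in E$.

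\emph{Step 1: identify the direction space of the affine hull.} We show that
$$
\mathrm{aff}(E)=x_0+\mathrm{span}\{x-x_0:x\in E\}.
$$
The right-hand side is an affine subspace containing $E$, so it contains $\mathrm{aff}(E)$. Conversely, $\mathrm{aff}(E)-x_0$ is a linear subspace, since an affine subspace through the origin is linear. This subspace contains every $x-x_0$ with $x\in E$, hence contains their span, which gives the reverse inclusion. Call this span $W$.

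\emph{Step 2 ($\Leftarrow$).} Suppose $E$ contains affinely independent points $y_0,\dots,y_m$. The vectors $y_i-y_0=(y_i-x_0)-(y_0-x_0)$ all lie in $W$. They are linearly independent by the definition of affine independence, so $\dim W\ge m$ and therefore $\dim_{\R}(\mathrm{aff}(E))\ge m$.

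\emph{Step 3 ($\Rightarrow$).} Suppose $\dim W\ge m$. Since $W$ is spanned by the set $\{x-x_0:x\in E\}$, every spanning set contains a basis, so we can extract $m$ linearly independent vectors $x_1-x_0,\dots,x_m-x_0$ with each $x_i\in E$. Then $x_0,\dots,x_m$ are $m+1$ points of $E$ that are affinely independent by definition.

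The only mildly delicate step is Step 1, namely the characterization of $\mathrm{aff}(E)$ as $x_0+W$. This needs the standard fact that a translate of an affine subspace by one of its own points is a linear subspace, and once that is in place the rest is pure linear algebra. One should also note that affine independence does not depend on which point is labelled $y_0$, although the argument above never actually uses this.
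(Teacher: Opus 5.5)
Your proof is correct and complete. The paper gives no proof of this proposition; it records it as a standard linear-algebra fact. Your argument is the standard one: identify $\mathrm{aff}(E)=x_0+\mathrm{span}\{x-x_0:x\in E\}$, then use linear independence of the $y_i-y_0$ for one direction and extract a basis from the spanning set for the other.

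One small wording fix concerns your reduction sentence. The convention $\dim_{\R}(\mathrm{aff}(\varnothing))=-1$ is what handles $E=\varnothing$, where both sides are false for every $m\geq 0$. It is not specifically about $m=0$: for nonempty $E$, the case $m=0$ is already covered by your Steps 2 and 3, since a single point is affinely independent.
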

\begin{remark}\label{affrd}
    If $A\subseteq\Z^d$, then $\mathrm{aff}(A)$ is an integer affine subspace of $\R^d$. Moreover, one has
    $$
    \dim_{\mathcal{UM}}^{\text{dis}}(A)\leq\dim_{\mathcal{UB}}^{\text{dis}}(A)\leq\dim_{\R}\left(\mathrm{aff}(A)\right)\leq d
    $$
\end{remark}
\begin{remark}\label{affrk}
    Let $A\subseteq\Z^d$ and suppose that
    $$
    r=\dim_{\R}\left(\mathrm{aff}(A)\right)<d
    $$
    Although $A$ is contained in an $r$-dimensional affine subspace of $\R^d$, one cannot in general identify the distance problem for $A$ with the standard distance problem in $\Z^r$: fix $x_0\in A$ and let
    $$
    \Lambda=\Z^d\cap\left(\mathrm{aff}(A)-x_0\right)
    $$
    Then $\Lambda$ is a rank-$r$ lattice. Choose a $\Z$-basis $v_1,\ldots,v_r$ of $\Lambda$. Every $x\in A$ can be written as
    $$
    x=x_0+Vk=x_0+\sum_{j=1}^rk_jv_j
    $$
    where $V=(v_1,\ldots,v_r)\in\mathrm{M}_{d\times r}(\Z)$ and $k\in\Z^r$. Thus, $A$ may be identified algebraically with a subset of $\Z^r$. However, under this identification the Euclidean distance between $x,y\in A$ is given by
    $$
    \|x-y\|^2=(k-l)^TV^TV(k-l)=Q(k-l)
    $$
    where $Q(z)=z^TV^TVz$ is a positive-definite integral quadratic form. In general, $Q(z)\neq\|z\|^2$. Hence, Euclidean spheres in $\mathrm{aff}(A)\cap\Z^d$ correspond to the quadratic level sets $\left\{z\in\Z^r:Q(z)=\lambda\right\}$ rather than to the standard discrete spheres $\left\{z\in\Z^r:\|z\|^2=\lambda\right\}$.
\end{remark}}
\subsection{Statement of Results}
Recall the elementary sphere bound
$$
T_{\mathrm{Sph}}(r,\theta)=\left\{
\begin{array}{ll}
    2\theta &\text{if }r=1,2  \\
    r-2+2\theta &\text{if }r\geq3
\end{array}
\right.
$$
and the finite Erd\H{o}s distance bound 
$$
T_{\mathrm{Erd}}(r,\theta)=\left\{
\begin{array}{ll}
    2\theta &\text{if }r=1,2  \\
    \frac{r^2+2r+c_r}{r+1}\theta &\text{if }r\geq3
\end{array}
\right.
\text{ where }c_r=
\left\{
\begin{array}{ll}
    -3+\frac{4}{r^2}&\text{if }r\text{ is even}\\
    -3&\text{if }r\text{ is odd}
\end{array}
\right.
$$
For $d\geq5$, recall the $\ell^p$-improving bound
$$
T_{\ell^p}(d,\theta)=\frac{(d+2)\theta+(d-2)}{2}
$$
The following results strengthen our main theorems:
\begin{thm}\label{mainthm}
Let $A\subseteq\Z^d$ and $0\leq\theta\leq1$. Let $r=\dim_{\R}\left(\mathrm{aff}(A)\right)$. Then
$$
\dim_{\mathcal{UB}}^{\text{dis}}(A)>T_{d,r}(\theta)\Longrightarrow \mathcal{N}_{\theta}^{*}\big(\Delta^2(A)\big)>0
$$
where\footnote{Note that $T_{\ell^p}(d,\theta)\geq T_{\mathrm{Erd}}(r,\theta)$ for all $0\le\theta\le1$ when $5\leq r<d$.}
$$
T_{d,r}(\theta)=\left\{
\begin{array}{ll}
     T_{\mathrm{Sph}}(r,\theta)=T_{\mathrm{Erd}}(r,\theta) &\text{if }r=1,2  \\
     T_{\mathrm{Erd}}(r,\theta)&\text{if }r=3\\
     \min\left\{T_{\mathrm{Sph}}(r,\theta),T_{\mathrm{Erd}}(r,\theta)\right\}&\text{if }r=4\text{ or }5\leq r<d\\
     \min\left\{T_{\ell^p}(d,\theta),T_{\mathrm{Erd}}(r,\theta)\right\}&\text{if }r=d\geq5
\end{array}
\right.
$$
in other words,
$$
T_{d,r}(\theta)=\left\{
\begin{array}{ll}
    2\theta &\text{if }r=1,2  \\
    3\theta &\text{if }r=3\\
    \min\left\{r-2+2\theta,\frac{r^2+2r+c_r}{r+1}\theta\right\}&\text{if }r=4\text{ or }5\leq r<d\\
    \min\left\{\frac{(d+2)\theta+(d-2)}{2},\frac{r^2+2r+c_r}{r+1}\theta\right\}&\text{if }r=d\geq5
\end{array}
\right.
$$
or equivalently,\footnote{Example \ref{egggg5555} shows that no nontrivial upper bound for $\dim_{\mathcal{UM}}^{\mathrm{dis}}\big(\Delta^2(A)\big)$ can depend solely on $\dim_{\mathcal H}^{\mathrm{dis}}(A)$, $\dim_{\mathcal{UM}}^{\text{dis}}(A)$, or $\dim_{\mathcal{UB}}^{\text{dis}}(A)$.}
$$
\dim_{\mathcal{UM}}^{\text{dis}}\big(\Delta^2(A)\big)\geq
\left\{
\begin{array}{ll}
\frac{\dim_{\mathcal{UB}}^{\text{dis}}(A)}{2}&\text{if }r=1,2\\
\frac{\dim_{\mathcal{UB}}^{\text{dis}}(A)}{3}&\text{if }r=3\\
\max\left\{\frac{\dim_{\mathcal{UB}}^{\text{dis}}(A)-(r-2)}{2},\frac{\dim_{\mathcal{UB}}^{\text{dis}}(A)(r+1)}{r^2+2r+c_r}\right\}&\text{if }r=4\text{ or }5\leq r<d\\
\max\left\{\frac{2\dim_{\mathcal{UB}}^{\text{dis}}(A)-d+2}{d+2},\frac{\dim_{\mathcal{UB}}^{\text{dis}}(A)(r+1)}{r^2+2r+c_r}\right\}&\text{if }r=d\geq5
\end{array}
\right.
$$
\end{thm}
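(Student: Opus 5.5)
The plan is to reduce the density statement to a finite counting problem on large cubes, and then to bound the number of distinct squared distances in a finite set in three independent ways. Fix $\theta$ and set $\beta=\dim_{\mathcal{UB}}^{\text{dis}}(A)$, assuming $\beta>T_{d,r}(\theta)$. Choose $\beta'$ with $T_{d,r}(\theta)<\beta'<\beta$.

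By definition of $\delta^*_{\beta'/d}(A)>0$, there are $N_k\to\infty$ and $t_k\in\Z^d$ such that $B_k=A\cap(t_k+Q_{N_k})$ satisfies $|B_k|\gtrsim N_k^{\beta'}$. Since $\Delta^2(B_k)\subseteq\Delta^2(A)$ and squared distances are translation invariant, we have
$$\Delta^2(B_k)\subseteq[0,dN_k^2].$$
Taking $M_k\approx dN_k^2$ in the definition of $\mathcal{N}^*_\theta$, it suffices to prove
$$|\Delta^2(B_k)|\gtrsim N_k^{2\theta},$$
or even $\gtrsim N_k^{2\theta+\eta}$ for some $\eta>0$. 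The slack $\beta'-T_{d,r}(\theta)>0$ absorbs all $N^{\epsilon}$ and $\log$ losses. The only features of $B_k$ we use are that $|B_k|\gtrsim N^{\beta'}$, that its diameter is $\lesssim N$, and that it lies in $\mathrm{aff}(A)$, of dimension $r$.

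\textbf{Sphere bound.} Fix a pin $x\in B_k$. Then
$$|B_k|\le|\Delta^2(B_k,x)|\cdot\max_{\lambda\le dN^2}\#\{y\in B_k:\|x-y\|^2=\lambda\}.$$
By Remark \ref{affrk}, the inner count is at most the number of $z\in\Z^r$ with $|z|\lesssim N$ and $Q(z)=\lambda$, for a positive-definite integral form $Q$ in $r$ variables.
\begin{itemize}
\item For $r=1$ this count is $\le2$.
\item For $r=2$ it is $\lesssim_\epsilon N^\epsilon$, by the divisor bound for representations by binary forms (factor $Q$ over the imaginary quadratic field).
\item For $r\ge3$, diagonalize $Q$ over $\mathbb{Q}$ into a binary block and $r-2$ further variables. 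Fix the $r-2$ other coordinates, which gives $\lesssim N^{r-2}$ choices. The remaining binary equation then has $\lesssim N^{\epsilon}$ solutions, so the count is $\lesssim N^{r-2+\epsilon}$.
\end{itemize}
Hence $|\Delta^2(B_k)|\gtrsim N^{\beta'-(r-2)_+-\epsilon}$, which beats $N^{2\theta}$ exactly when $\beta'>T_{\mathrm{Sph}}(r,\theta)$. This gives the cases $r=1,2$, and the $T_{\mathrm{Sph}}$ branch for $r\ge4$, $r<d$.

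\textbf{Finite Erd\H{o}s bound.} The affine space $\mathrm{aff}(A)$ is isometric to $\R^r$, so $B_k$ is an $n$-point set in $\R^r$ with $n\gtrsim N^{\beta'}$. We then invoke the known finite results:
\begin{itemize}
\item Guth--Katz for $r=2$, giving $\gtrsim n/\log n$;
\item Tidor--Yu--Zakharov for $r=3$, giving $\gtrapprox n^{2/3}$;
\item Solymosi--Vu for $r\ge4$, giving $\gtrapprox n^{2(r+1)/(r^2+2r+c_r)}$.
\end{itemize}
This yields $|\Delta^2(B_k)|\gtrapprox N^{2\beta'(r+1)/(r^2+2r+c_r)}$, and the exponent exceeds $2\theta$ iff $\beta'>T_{\mathrm{Erd}}(r,\theta)$. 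I will check that the stated constants $c_r$ match these exponents; for $r=3$ we have $\frac{9+6-3}{4}=3$.

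\textbf{$\ell^p$-improving bound ($r=d\ge5$).} Here I would count pairs:
$$|B_k|^2=\sum_{\lambda\in\Delta^2(B_k)}\langle 1_{B_k},\sigma_\lambda*1_{B_k}\rangle,$$
where $\sigma_\lambda$ is the unnormalized counting measure on the sphere $\{\|z\|^2=\lambda\}$, of mass $\approx\lambda^{d/2-1}$ (up to local factors). Writing $\sigma_\lambda=\lambda^{d/2-1}A_\lambda$, I would apply the discrete spherical $\ell^p\to\ell^{p'}$ improving estimates recalled in Section \ref{MT} (Hughes, Kesler--Lacey type) at the endpoint exponent. This bounds each term by $\lambda^{d/2-1}\lambda^{-\gamma}|B_k|^{2/p}$, where the decay $\gamma$ comes from that inequality, with $\lambda\lesssim N^2$. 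Dividing, the bound reads $|\Delta^2(B_k)|\gtrsim|B_k|^{2-2/p}N^{-(d-2)+2\gamma}$. The exponent arithmetic should reduce exactly to $\beta'>\frac{(d+2)\theta+(d-2)}{2}$; I will fix the exponent $p$ so that this matches.

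The step I expect to be the main obstacle is this $\ell^p$-improving step. The available inequality must hold uniformly over all $\lambda\le dN^2$, and its constants involve the arithmetic (major-arc) part of the averages. One must also ensure that only the full lattice $\Z^d$ is used, which is why this bound is restricted to $r=d$. A secondary technical point is making the $r\ge3$ sphere count uniform for a general form $Q$ coming from Remark \ref{affrk} rather than $\|z\|^2$.

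Finally, taking the minimum of the applicable thresholds in each case gives $T_{d,r}(\theta)$. The equivalent dimension formulation follows by solving each threshold inequality for $\theta$ and taking the supremum.
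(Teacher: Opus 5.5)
Your proposal is correct and follows the paper's proof essentially step for step. You pass to windows $A\cap(t+Q_N)$ of size $\gtrsim N^{\beta'}$ and bound the number of squared distances from below in three ways: by pinned lattice-sphere counting in the $r$-dimensional affine hull, by the finite Erd\H{o}s results (Guth--Katz, Tidor--Yu--Zakharov, the Solymosi--Vu recursion) in $\R^r$, and, for $r=d\ge5$, by the pair-counting identity combined with an $\ell^p$-improving estimate. The exponent you left open closes at the endpoint $p=\frac{d+2}{d}$, $p'=\frac{d+2}{2}$, giving the threshold $\theta p'+d-p'=\frac{(d+2)\theta+(d-2)}{2}$; the input that removes your anticipated obstacle is Han--Yang's Theorem \ref{lp}, valid for every $\lambda\in\N$ with only a $\lambda^{\epsilon}$ loss, rather than the Hughes or Kesler--Lacey estimates, whose narrower $p$-range (and, for Hughes, restriction to regular $\lambda$) would give only a weaker threshold.
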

Define the exceptional set of bad pins:
$$
E_{\theta}(A)=\left\{x\in A: \mathcal{N}_{\theta}^{*}\big(\Delta^2(A,x)\big)=0\right\}
$$
\begin{thm}\label{pinmainthm}
Let $A\subseteq\Z^d$ and $0\leq\theta\leq1$. Let $r=\dim_{\R}\left(\mathrm{aff}(A)\right)$. If $r=1$, then
$$
\dim_{\mathcal{UM}}^{\text{dis}}(A)>2\theta\Longrightarrow E_{\theta}(A)=\varnothing
$$
or equivalently,\footnote{Together with Remark \ref{upperboundpin}, this yields $\dim_{\mathcal{UM}}^{\text{dis}}\big(\Delta^2(A,x)\big)=\frac{\dim_{\mathcal{UM}}^{\text{dis}}(A)}{2}$ for all $x\in A$ when $r=1,2$.}
$$
\dim_{\mathcal{UM}}^{\text{dis}}\big(\Delta^2(A,x)\big)\geq\frac{\dim_{\mathcal{UM}}^{\text{dis}}(A)}{2}\text{ for all }x\in A
$$
If $r\geq2$, then for every integer $0\leq K\leq r-2$, one has\footnote{Here we adopt the convention: $\dim_{\R}\left(\mathrm{aff}\left(E_{\theta}(A)\right)\right)\leq-1$ if and only if $E_{\theta}(A)=\varnothing$ if and only if for every $x\in A$, one has $\mathcal{N}_{\theta}^{*}\big(\Delta^2(A,x)\big)>0$.}\footnote{The threshold \(r-2+2\theta+K(2\theta-1)\) decreases with \(K\) when \(\theta<\frac12\), equals \(r-1\) for every \(K\) when \(\theta=\frac12\), and increases with \(K\) when \(\theta>\frac12\). Thus, for \(\theta\ge\frac12\), all these conclusions follow from the \(K=0\) case, which guarantees that every pin is good.}
$$
\dim_{\mathcal{UM}}^{\text{dis}}(A)>r-2+2\theta+K(2\theta-1)\Longrightarrow\dim_{\R}\left(\mathrm{aff}\left(E_{\theta}(A)\right)\right)\leq K-1
$$
or equivalently, among any \(K+1\) affinely independent points
\(x_0,\ldots,x_K\in A\), there exists \(i\in\{0,\ldots,K\}\) such that\footnote{This lower bound is positive precisely when \(\dim_{\mathcal{UM}}^{\mathrm{dis}}(A)>r-K-2\). As \(K\) increases, it is nondecreasing when \(\dim_{\mathcal{UM}}^{\mathrm{dis}}(A)<r-1\), equals \(\frac12\) for every \(K\) when \(\dim_{\mathcal{UM}}^{\mathrm{dis}}(A)=r-1\), and is strictly decreasing when \(\dim_{\mathcal{UM}}^{\mathrm{dis}}(A)>r-1\). Thus, when \(\dim_{\mathcal{UM}}^{\mathrm{dis}}(A)\ge r-1\), all these conclusions follow from the \(K=0\) case, which applies to every pin.}
$$
\dim_{\mathcal{UM}}^{\text{dis}}\big(\Delta^2(A,x_i)\big)\geq\frac{\dim_{\mathcal{UM}}^{\text{dis}}(A)-(r-2)+K}{2(K+1)}
$$
In particular, taking \(K=0\) and \(K=r-2\) yields the following conclusions:\footnote{For \(\theta<\frac12\), the threshold \(2(r-1)\theta\) guarantees that \(\dim_{\R}\left(\mathrm{aff}\left(E_{\theta}(A)\right)\right)\leq r-3\), while the larger threshold \(r-2+2\theta\) guarantees that every pin is good. The two thresholds coincide when \(r=2\).}
$$
\dim_{\mathcal{UM}}^{\text{dis}}(A)>
\left\{
\begin{array}{l}
    2(r-1)\theta \\
    r-2+2\theta 
\end{array}
\right.
\Longrightarrow
\left\{
\begin{array}{l}
     \dim_{\R}\left(\mathrm{aff}\left(E_{\theta}(A)\right)\right)\leq r-3   \\
     E_{\theta}(A)=\varnothing
\end{array}
\right.
$$
or equivalently,\footnote{The lower bound \(\frac{\dim_{\mathcal{UM}}^{\mathrm{dis}}(A)}{2(r-1)}\) holds for at least one pin among any \(r-1\) affinely independent points of \(A\), while \(\frac{\dim_{\mathcal{UM}}^{\mathrm{dis}}(A)-(r-2)}{2}\) holds for every pin. For \(r\ge3\), the former is larger when \(\dim_{\mathcal{UM}}^{\mathrm{dis}}(A)<r-1\), and the latter is larger when \(\dim_{\mathcal{UM}}^{\mathrm{dis}}(A)>r-1\). The two bounds coincide when \(\dim_{\mathcal{UM}}^{\mathrm{dis}}(A)=r-1\) or \(r=2\).}
\begin{enumerate}[label=(\alph*)]
    \item Among any $r-1$ affinely independent points $x_1,\ldots,x_{r-1}\in A$, there exists $i\in\{1,\ldots,r-1\}$ such that
    $$
    \dim_{\mathcal{UM}}^{\text{dis}}\big(\Delta^2(A,x_i)\big)\geq\frac{\dim_{\mathcal{UM}}^{\text{dis}}(A)}{2(r-1)}
    $$
    \item One has
    $$
    \dim_{\mathcal{UM}}^{\text{dis}}\big(\Delta^2(A,x)\big)\geq\frac{\dim_{\mathcal{UM}}^{\text{dis}}(A)-(r-2)}{2}\text{ for all }x\in A
    $$
\end{enumerate}
\end{thm}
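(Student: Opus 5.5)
We prove Theorem \ref{pinmainthm} by counting lattice points on spheres, applied pin by pin. Fix $x\in A$ and set $\alpha=\dim_{\mathcal{UM}}^{\text{dis}}(A)$. Choose a large $N$ with $|A\cap Q_N|\gtrsim N^{\alpha-\epsilon}$; such $N$ exist along a sequence by the definition of $\mathcal{N}^*_\theta$. Every $y\in A\cap Q_N$ satisfies $\|x-y\|^2\lesssim N^2$, where the implied constant depends on $x$. This gives the elementary upper-bound direction behind Remark \ref{upperboundpin}, and also the relation
$$
|A\cap Q_N|\le\sum_{\lambda\in\Delta^2(A,x)\cap[0,CN^2]}\bigl|\{y\in A\cap Q_N:\|x-y\|^2=\lambda\}\bigr|.
$$
It therefore suffices to bound the size of one fibre $A\cap Q_N\cap S(x,\lambda)$ uniformly in $\lambda\lesssim N^2$. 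If $M$ bounds every fibre, then $|\Delta^2(A,x)\cap[0,CN^2]|\ge |A\cap Q_N|/M$. Converting this back into $\theta$-density at scale $N^2$ gives the stated dimension bound; the factor $2$ in the denominators comes from the scale change $N\mapsto N^2$.

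The affine structure is used as follows. $A$ lies in an $r$-dimensional integer affine subspace $L$, so each fibre lies in $S(x,\lambda)\cap L\cap Q_N$. This is a sphere of dimension $r-1$ inside the rank-$r$ lattice of Remark \ref{affrk}, i.e.\ a level set of a positive-definite integral quadratic form in $r$ variables.
\begin{itemize}
\item For $r=1$, the set $L\cap S(x,\lambda)$ has at most $2$ points, so $M=2$. This gives $\dim_{\mathcal{UM}}(\Delta^2(A,x))\ge\alpha/2$ for every pin.
\item For $r=2$, the number of representations of $\lambda$ by a binary form is $\lesssim_\epsilon \lambda^{\epsilon}$ by the divisor bound, so $M\lessapprox 1$. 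This again gives $\alpha/2$.
\item For general $r$, the elementary sphere count from Section \ref{MT} gives $M\lessapprox N^{r-2}$. This is the count of lattice points on an $(r-1)$-sphere of radius $N$ in a rank-$r$ lattice: fix $r-2$ coordinates, then use the binary bound. It yields case (b) and the $K=0$ threshold $r-2+2\theta$.
\end{itemize}

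For general $K$, we intersect fibres at several pins simultaneously. Take affinely independent $x_0,\dots,x_K\in A$ and suppose every $\Delta^2(A,x_i)$ is sparse, with at most $N^{2\beta}$ elements up to $CN^2$. The map
$$
y\mapsto(\|y-x_0\|^2,\dots,\|y-x_K\|^2)
$$
has image of size at most $N^{2\beta(K+1)}$. A fixed tuple of values determines $y$ inside the intersection of $K+1$ spheres. Subtracting pairs of these equations gives $K$ independent affine equations, so $y$ lies in a single $(r-1-K)$-dimensional sphere within $L$. Its lattice-point count is $\lessapprox N^{r-2-K}$ when $r-K\ge 3$, and $\lessapprox 1$ when $r-K=2$. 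Hence
$$
N^{\alpha-\epsilon}\lessapprox N^{2\beta(K+1)+r-2-K},
$$
which gives $\beta\ge \frac{\alpha-(r-2)+K}{2(K+1)}$ for at least one $i$. This is the equivalent form of the statement.

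Two bookkeeping points remain for this part.
\begin{itemize}
\item The upper density is a $\limsup$, so we need a common sequence of $N$ along which all $K+1$ pinned sets are simultaneously sparse. This works because $\mathcal{N}^*_\theta=0$ means the density tends to $0$ along every sequence. Translated to $\theta$-density, this says that if the threshold inequality holds, the $x_i$ cannot all be bad.
\item The statement that the affine hull of $E_\theta(A)$ has dimension at most $K-1$ then follows. Otherwise $E_\theta(A)$ would contain $K+1$ affinely independent bad points, which is a contradiction.
\end{itemize}

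The main obstacle is the uniform lattice-point bound for the lower-dimensional sphere $S\cap L'$. Here $L'$ is an affine sublattice of codimension $K$ in $L$, with possibly non-integer centre and rational radius after the linear reductions. We need the count $\lessapprox N^{r-2-K}$ uniformly in the centre and radius, with constants depending only on the fixed pins. I would first clear denominators, multiplying by a common integer depending on $x_0,\dots,x_K$, to obtain an integral positive-definite form in $r-K$ variables with values $\lesssim N^2$. I would then slice by fixing all but two coordinates, giving at most $N^{r-K-2}$ choices, and apply the divisor-type bound for binary forms in each slice. Care is needed to make the binary bound uniform over non-diagonal forms; this can be handled by completing the square, which reduces to counting representations by a fixed form of bounded discriminant.
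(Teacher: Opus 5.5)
Your proposal is correct and follows the paper's proof essentially step for step. The paper also bounds $|A_N|$ by $|\Phi_K(A_N)|\cdot\max_\lambda|\Phi_K^{-1}(\lambda)|$ for the map $y\mapsto(\|y-x_i\|^2)_{i=0}^K$, uses $|\Phi_K(A_N)|=o(N^{2\theta(K+1)})$ for bad pins and the count $N^{r-K-2+\epsilon}$ for lattice points on the sphere in the $(r-K)$-dimensional affine slice, and uses the two-point bound on a line for $r=1$. The only difference is that you justify the uniformity of the fibre count (clearing denominators, slicing, completing the square to a binary form of fixed discriminant), whereas the paper takes this directly from its stated, unproved sphere bound for integer affine subspaces.
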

By Remark \ref{dr} and Remark \ref{affrd}, one can see that Theorem \ref{mainthm} and Theorem \ref{pinmainthm} imply Theorem \ref{mainthm2} and Theorem \ref{pinmainthm2}, respectively.

One can see that the elementary sphere bound is better than the finite Erd\H{o}s distance bound when $\theta\geq\theta_{\mathrm{stronger}}^{\mathrm{(Sph,Erd)}}$ or equivalently $\dim_{\mathcal{UB}}^{\text{dis}}(A)\geq d_{\mathrm{stronger}}^{\mathrm{(Sph,Erd)}}$, where
$$
\theta_{\mathrm{stronger}}^{\mathrm{(Sph,Erd)}}=\frac{(r-2)(r+1)}{r^2+c_r-2}
\quad\quad\quad d_{\mathrm{stronger}}^{\mathrm{(Sph,Erd)}}=\frac{(r-2)(r^2+2r+c_r)}{r^2+c_r-2}
$$
\def\rVal{5}

\pgfmathsetmacro{\cVal}{
    -3+(1-mod(\rVal,2))*4/(\rVal*\rVal)
}
\pgfmathsetmacro{\CVal}{
    (\rVal*\rVal+2*\rVal+\cVal)/(\rVal+1)
}
\pgfmathsetmacro{\thetaCross}{
    (\rVal-2)/(\CVal-2)
}
\pgfmathsetmacro{\thetaInapp}{
    \rVal/\CVal
}
\pgfmathsetmacro{\sCross}{
    \CVal*\thetaCross
}
\pgfmathsetmacro{\yMaxOne}{\rVal+0.6}
\pgfmathsetmacro{\xMaxTwo}{\rVal+0.2}


\begin{figure}[H]
\centering

\begin{tikzpicture}
\begin{axis}[
    width=12cm,
    height=8cm,
    xmin=0,
    xmax=1.03,
    ymin=0,
    ymax=\yMaxOne,
    axis lines=left,
    xlabel={$\theta$},
    ylabel={
        Sufficient threshold for
        $\dim_{\mathcal{UB}}^{\mathrm{dis}}(A)$
    },
    xtick={0,1},
    xticklabels={$0$,$1$},
    ytick={0,\rVal},
    yticklabels={$0$,$r$},
    extra x ticks={\thetaCross,\thetaInapp},
    extra x tick labels={
        $\theta_{\mathrm{stronger}}^{\mathrm{(Sph,Erd)}}$,
        $\theta_{\mathrm{inapplicable}}^{\mathrm{Erd}}$
    },
    extra x tick style={
        tick label style={
            rotate=90,
            anchor=east,
            font=\scriptsize,
            yshift=-2pt
        }
    },
    tick label style={font=\small},
    legend style={
        at={(0.05,0.95)},
    anchor=north west,
    draw=black,
    fill=white,
    fill opacity=1,
    text opacity=1,
    font=\small,
    legend columns=1
    },
    legend cell align=left,
]

\addplot[
    blue!85!black,
    thick,
    domain=0:1,
    samples=2
] {\CVal*x};
\addlegendentry{Finite Erd\H{o}s distance bound}

\addplot[
    red!85!black,
    thick,
    domain=0:1,
    samples=2
] {\rVal-2+2*x};
\addlegendentry{Elementary sphere bound}

\addplot[
    green!50!black,
    thick,
    domain=0:1,
    samples=2
] {\rVal*x};
\addlegendentry{Our Conjecture \ref{conj}}

\addplot[
    black,
    thick,
    domain=0:1,
    samples=2
] {\rVal};
\addlegendentry{Dimension cap}

\addplot[
    gray,
    dashed,
    forget plot
] coordinates {
    (\thetaCross,0)
    (\thetaCross,\CVal*\thetaCross)
};

\addplot[
    gray,
    thick,
    dotted,
    forget plot
] coordinates {
    (\thetaInapp,0)
    (\thetaInapp,\rVal)
};

\addplot[
    only marks,
    mark=*,
    mark size=2pt,
    black,
    forget plot
] coordinates {
    (\thetaCross,\CVal*\thetaCross)
    (\thetaInapp,\rVal)
};

\end{axis}
\end{tikzpicture}

\caption{
Comparison of the lower bounds for
$\dim_{\mathcal{UB}}^{\mathrm{dis}}(A)$
obtained from the finite Erd\H{o}s distance bound
and the elementary sphere bound,
together with our Conjecture \ref{conj} and
the dimension cap, for $r=\rVal$.
}
\label{fig:erdos-sphere-theta}

\end{figure}


\begin{figure}[H]
\centering

\begin{tikzpicture}
\begin{axis}[
    width=12cm,
    height=8cm,
    xmin=0,
    xmax=\xMaxTwo,
    ymin=0,
    ymax=1.08,
    axis lines=left,
    xlabel={
        $\dim_{\mathcal{UB}}^{\mathrm{dis}}(A)$
    },
    ylabel={
        Lower bounds for
        $\dim_{\mathcal{UM}}^{\mathrm{dis}}
        (\Delta^2(A))$
    },
    xtick={0,\rVal},
    xticklabels={$0$,$r$},
    ytick={0,1},
    yticklabels={$0$,$1$},
    extra x ticks={\sCross},
    extra x tick labels={
        $d_{\mathrm{stronger}}^{\mathrm{(Sph,Erd)}}$
    },
    extra x tick style={
        tick label style={
            rotate=90,
            anchor=east,
            font=\scriptsize,
            yshift=-2pt
        }
    },
    tick label style={font=\small},
    legend style={
        at={(0.05,0.95)},
    anchor=north west,
    draw=black,
    fill=white,
    fill opacity=1,
    text opacity=1,
    font=\small,
    legend columns=1
    },
    legend cell align=left,
]

\addplot[
    blue!85!black,
    thick,
    domain=0:\rVal,
    samples=2
] {x/\CVal};
\addlegendentry{Finite Erd\H{o}s distance bound}

\addplot[
    black,
    thick,
    domain=0:\rVal,
    samples=2
] {1};
\addlegendentry{Dimension cap}
\addplot[
    red!85!black,
    thick,
    domain=\rVal-2:\rVal,
    samples=2
] {(x-\rVal+2)/2};
\addlegendentry{Elementary sphere bound}

\addplot[
    green!50!black,
    thick,
    domain=0:\rVal,
    samples=2
] {x/\rVal};
\addlegendentry{Our Conjecture \ref{conj}}

\addplot[
    black,
    thick,
    forget plot
] coordinates {
    (\rVal,0)
    (\rVal,1)
};

\addplot[
    gray,
    dashed,
    forget plot
] coordinates {
    (\sCross,0)
    (\sCross,\thetaCross)
};

\addplot[
    only marks,
    mark=*,
    mark size=2pt,
    black,
    forget plot
] coordinates {
    (\sCross,\thetaCross)
};

\end{axis}
\end{tikzpicture}

\caption{
Comparison of the lower bounds for
$\dim_{\mathcal{UM}}^{\mathrm{dis}}
(\Delta^2(A))$
obtained from the finite Erd\H{o}s distance bound
and the elementary sphere bound,
together with our Conjecture \ref{conj} and
the dimension cap, for $r=\rVal$.
}
\label{fig:erdos-sphere-dimension}

\end{figure}
\section{Main Tools}\label{MT}
{Our main tools are elementary integer sphere counting, $\ell^p$-improving inequalities for the discrete spherical averages, and the results on the Erd\H{o}s distance problem.
\subsection{Elementary Integer Sphere Counting}
For every $\lambda\in\N$, let
$$S_{\lambda}=\left\{x\in\Z^d:\|x\|^2=\lambda\right\}\quad\quad\quad\sigma_{\lambda}=\bm{1}_{S_{\lambda}}$$
It is well-known that for every $\epsilon>0$, one has (See for example \cite[Chapter~20]{iwaniec2021analytic})
$$
|S_{\lambda}|\lesssim
\left\{
\begin{array}{ll}
    2 & \text{if }d=1 \\
    \lambda^{\frac{d}{2}-1+\epsilon} & \text{if }2\leq d\leq 4 \\
    \lambda^{\frac{d}{2}-1} &\text{if }d\geq5
\end{array}
\right.
$$
We also have the following generalization: for every integer affine subspace $H$ of $\R^d$, one has
$$
\left|H\cap S_{\lambda}\right|\lessapprox
\left\{
\begin{array}{ll}
    1 &\text{if }\dim_{\R}(H)=1 \\
    \lambda^{\frac{\dim_{\R}(H)}{2}-1} & \text{if }\dim_{\R}(H)\geq2
\end{array}
\right.
$$
\subsection{$\ell^p$-Improving Inequalities for Discrete Spherical Averages}
The study of $\ell^p$-improving inequalities for discrete spherical averages is motivated by their continuous counterparts. A classical result on $L^p$-improving inequalities for spherical averages is the following theorem of Littman \cite{littman1973lp}:
\begin{thm}[Littman]
Let $d\geq2$ and $f\colon\R^d\to\C$. Then
$$
\|f\ast d\sigma\|_{L^{d+1}(\R^d)}\lesssim\|f\|_{L^{\frac{d+1}{d}}(\R^d)}
$$
where $d\sigma$ is the Euclidean surface measure on the unit sphere $\mathbb{S}^{d-1}$ in $\R^d$.
\end{thm}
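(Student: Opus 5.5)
The plan is to use Stein's complex interpolation for an analytic family of convolution operators that contains $f\mapsto f\ast d\sigma$. The family interpolates between a trivial $L^1\to L^\infty$ bound, where the kernel is a bounded function, and an $L^2\to L^2$ bound, where the Fourier multiplier is bounded.

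\textbf{Step 1: the analytic family.} For $\operatorname{Re} z>0$ set
$$
K_z(x)=\frac{(1-\|x\|^2)_+^{\,z-1}}{\Gamma(z)}.
$$
This family extends analytically in $z$ as a distribution-valued function. At $z=0$ it equals a constant multiple of $d\sigma$, because $(1-\|x\|^2)_+^{z-1}/\Gamma(z)$ tends to a multiple of the surface measure on $\{\|x\|=1\}$ as $z\to0$. I would check this by pairing with a test function and using the one-variable fact $t_+^{z-1}/\Gamma(z)\to\delta_0$. Define $T_zf=f\ast K_z$.

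\textbf{Step 2: the endpoint $\operatorname{Re} z=1$.} Write $z=1+i\tau$. Then $|K_z(x)|\le \mathbf 1_{\{\|x\|\le1\}}/|\Gamma(1+i\tau)|$. Hence
$$
\|T_zf\|_{L^\infty}\le |\Gamma(1+i\tau)|^{-1}\|f\|_{L^1}.
$$
The factor $|\Gamma(1+i\tau)|^{-1}$ grows at most like $e^{\pi|\tau|/2}$, which is admissible growth for Stein's theorem.

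\textbf{Step 3: the endpoint $\operatorname{Re} z=-\frac{d-1}{2}$.} The standard Bessel computation gives
$$
\widehat{K_z}(\xi)=c\,\pi^{-z}\,\frac{J_{d/2+z-1}(2\pi\|\xi\|)}{\|\xi\|^{d/2+z-1}}.
$$
From the asymptotics of $J_\nu$, uniformly for $\operatorname{Re}\nu$ in compact sets and with at most exponential dependence on $\operatorname{Im}\nu$, one gets
$$
|\widehat{K_z}(\xi)|\lesssim e^{C|\operatorname{Im} z|}(1+\|\xi\|)^{-\frac{d-1}{2}-\operatorname{Re} z}.
$$
On $\operatorname{Re} z=-\frac{d-1}{2}$ the multiplier is therefore bounded, and Plancherel gives
$$
\|T_zf\|_{L^2}\lesssim e^{C|\operatorname{Im} z|}\|f\|_{L^2}.
$$

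\textbf{Step 4: interpolation.} Write $0=(1-t)\cdot1+t\cdot\bigl(-\frac{d-1}{2}\bigr)$, which gives $t=\frac{2}{d+1}$. Stein's theorem then yields boundedness of $T_0$ from $L^p$ to $L^{p'}$ with
$$
\frac1p=(1-t)+\frac t2=1-\frac1{d+1}=\frac{d}{d+1},\qquad p'=d+1.
$$
This is exactly the estimate $\|f\ast d\sigma\|_{L^{d+1}}\lesssim\|f\|_{L^{(d+1)/d}}$.

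\textbf{Main obstacle.} The hard part is Step 3. One needs the Bessel decay bound uniformly along the whole vertical line, with growth in $\operatorname{Im} z$ only of exponential type, and one must make rigorous the meromorphic continuation and analyticity of $z\mapsto\int (T_zf)g$ for simple functions $f,g$ on the strip. For the decay I would first try the integral representation of $J_\nu$, or equivalently a stationary-phase expansion of $\widehat{K_z}$ written as a one-dimensional oscillatory integral in the radial variable $s=1-\|x\|^2$, tracking $\Gamma$-function factors through Stirling's formula.
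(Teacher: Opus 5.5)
The paper does not prove this theorem. It quotes it from Littman as the continuous model for the discrete Han--Yang estimate, so there is no argument in the paper to compare yours with.

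Your proposal is the standard Stein complex-interpolation proof, and it is correct:
\begin{itemize}
\item $K_0=\frac12\,d\sigma$, and consistently the multiplier $\pi\|\xi\|^{1-\frac d2}J_{\frac d2-1}(2\pi\|\xi\|)$ at $z=0$ equals $\frac12\widehat{d\sigma}$.
\item Both endpoint bounds are right.
\item The growth in $\operatorname{Im} z$ is admissible once the strip $-\frac{d-1}{2}\le\operatorname{Re} z\le 1$ is rescaled to width one.
\item The choice $t=\frac{2}{d+1}$ gives exactly $p=\frac{d+1}{d}$ and $p'=d+1$ for every $d\ge2$.
\end{itemize}

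Two remarks on writing Step 3 carefully.

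\emph{Defining $T_z$.} It is simplest to define $T_z$ on the whole closed strip as the Fourier multiplier $m_z(\xi)=\pi^{1-z}\|\xi\|^{-(\frac d2+z-1)}J_{\frac d2+z-1}(2\pi\|\xi\|)$. This is entire in $z$ for each fixed $\xi$, because $r^{-\nu}J_\nu(r)=2^{-\nu}\sum_{k\ge0}\frac{(-1)^k(r/2)^{2k}}{k!\,\Gamma(k+\nu+1)}$. Analyticity and continuity of $z\mapsto\int (T_zf)g$ for simple $f,g$ then follow from locally uniform bounds on $m_z$. You never need the distributional continuation of $K_z$ itself.

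\emph{Obtaining the bounds.} Do not rely on the large-argument expansion of $J_\nu$ for these bounds. Its error terms are controlled only for $r\gtrsim|\nu|^2$, so it is not uniform along a vertical line. Moreover, on your line the order $\nu=\frac d2+z-1$ has $\operatorname{Re}\nu=-\frac12$, where Poisson's integral for $J_\nu$ does not converge.

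The radial route you mention is the right one:
\begin{itemize}
\item Write $m_z(\xi)=\int_0^1\frac{(1-\rho^2)^{z-1}}{\Gamma(z)}\rho^{d-1}\widehat{d\sigma}(\rho\xi)\,d\rho$.
\item For $\|\eta\|\ge1$, insert $\widehat{d\sigma}(\eta)=\sum_{\pm}e^{\pm2\pi i\|\eta\|}a_\pm(\|\eta\|)$, where $a_\pm$ are symbols of order $-\frac{d-1}{2}$.
\item Near $\rho=1$, use the identity $\int_0^\infty\frac{u^{z-1}}{\Gamma(z)}e^{-i\lambda u}\,du=(i\lambda)^{-z}$, continued analytically in $z$. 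Its modulus is at most $|\lambda|^{-\operatorname{Re} z}e^{\frac{\pi}{2}|\operatorname{Im} z|}$.
\item Away from $\rho=1$ the factor $(1-\rho^2)^{z-1}$ is smooth. There the trivial bound $|\widehat{d\sigma}(\rho\xi)|\lesssim\min\{1,(\rho\|\xi\|)^{-\frac{d-1}{2}}\}$ already gives boundedness on $\operatorname{Re} z=-\frac{d-1}{2}$.
\end{itemize}
Together these give $|m_z(\xi)|\lesssim e^{C|\operatorname{Im} z|}(1+\|\xi\|)^{-\frac{d-1}{2}-\operatorname{Re} z}$ directly. This is all Stein's theorem needs, and no separate Stirling analysis is required beyond the normalization already built into $1/\Gamma(z)$.
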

$L^p$-improving inequalities for spherical averages are important tools in harmonic analysis and partial differential equations, especially for the wave equation. They also naturally appear in the study of distance problems.

We will apply the following $\ell^p$-improving inequalities for the discrete spherical averages, proved by Han and Yang \cite{han2026sharpellpimprovingestimatesfixedradius}, to prove our main theorem for $d\geq5$.
\begin{thm}[Han and Yang]\label{lp}
    Let $f\colon\Z^d\to\C$. If $d\geq 5$ and $\frac{d+2}{d}\leq p\leq2$, then for every $\epsilon>0$ and every $\lambda\in\N$, one has
    $$
    \|f\ast\sigma_{\lambda}\|_{\ell^{p'}(\Z^d)}\lesssim_{d,p,\epsilon}\lambda^{\epsilon-\frac{d}{2}\left(\frac{2}{p}-1\right)}|S_{\lambda}|\|f\|_{\ell^p(\Z^d)}
    $$
\end{thm}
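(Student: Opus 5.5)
The plan is to interpolate between a trivial $\ell^2$ estimate and an endpoint estimate at $p=\frac{d+2}{d}$, proving the endpoint by a circle-method decomposition of $\sigma_\lambda$.

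\textbf{The $p=2$ case.} The case $p=2$ is immediate. The exponent $\frac{d}{2}\left(\frac{2}{p}-1\right)$ vanishes, and Young's inequality gives
$$
\|f\ast\sigma_\lambda\|_{\ell^2}\leq\|\sigma_\lambda\|_{\ell^1}\|f\|_{\ell^2}=|S_\lambda|\|f\|_{\ell^2}.
$$
By Riesz--Thorin interpolation, the factor $\lambda^{\epsilon-\frac d2(\frac2p-1)}$ is log-linear in $\frac1p$, so it then suffices to prove the endpoint $p_0=\frac{d+2}{d}$, $p_0'=\frac{d+2}{2}$. There the claimed bound reads
$$
\|f\ast\sigma_\lambda\|_{\ell^{\frac{d+2}{2}}}\lesssim_\epsilon \lambda^{\epsilon+\frac{d-2}{2}-\frac{d(d-2)}{2(d+2)}}\|f\|_{\ell^{\frac{d+2}{d}}}=\lambda^{\epsilon+\frac{d-2}{d+2}}\|f\|_{\ell^{\frac{d+2}{d}}},
$$
using $|S_\lambda|\approx\lambda^{\frac d2-1}$ for $d\geq5$.

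\textbf{Decomposition of the kernel.} For the endpoint, I would follow the Magyar--Stein--Wainger approach, as refined by Hughes and Kesler--Lacey. Expand
$$
\sigma_\lambda(x)=\int_0^1\sum_{y}e\big((\|y\|^2-\lambda)t\big)\,e(\ldots)\,dt
$$
and apply the Kloosterman-refined circle method with denominators $q\leq\sqrt\lambda$. This writes
$$
\sigma_\lambda=\sum_{1\leq s\leq\log_2\sqrt\lambda}K_s+\mathcal E,
$$
where $K_s$ collects the major-arc pieces with $q\sim 2^s$. The Fourier transform $\widehat{K_s}(\xi)$ is a sum over $a\in(\Z/q)^\times$ and $b\in(\Z/q)^d$ of normalized Gauss sums $q^{-d}G(a,b;q)e(-a\lambda/q)$, each multiplied by a bump $\widehat{d\sigma}_{\sqrt\lambda}(\xi-b/q)$ localized at scale $\lambda^{-1/2}$. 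For each dyadic piece $K_s$ I would establish two bounds.
\begin{enumerate}
\item An $\ell^2\to\ell^2$ bound $B_s\lesssim\lambda^{\frac d2-1}2^{-s(\frac d2-1)}$. This comes from Gauss sum decay $|G|\lesssim q^{d/2}$, the disjointness of the rational bumps, and a $q^{1+\epsilon}$ loss from summing over $a$, the latter handled through Kloosterman-sum cancellation.
\item An $\ell^1\to\ell^\infty$ bound $A_s=\|K_s\|_{\ell^\infty}\lesssim\lambda^{\frac d2-1}\cdot 2^{s}\lambda^{-\frac d2}\cdot 2^{s\epsilon}$ or similar. This comes from the pointwise kernel: the Ramanujan/Kloosterman sum in $x$ times a smooth kernel of height $\approx\lambda^{-1}$ spread over the $\sqrt\lambda$-ball.
\end{enumerate}
The error term $\mathcal E$ has a small $\ell^1\to\ell^\infty$ norm, controlled by minor-arc Weyl bounds, and a trivially controlled $\ell^2$ norm.

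\textbf{Summing the pieces.} Interpolating gives
$$
\|K_s\|_{p_0\to p_0'}\leq A_s^{1-\frac{2}{p_0'}}B_s^{\frac{2}{p_0'}}=A_s^{\frac{d-2}{d+2}}B_s^{\frac{4}{d+2}}.
$$
Plugging in the bounds and summing the geometric series in $s$ should produce exactly $\lambda^{\frac{d-2}{d+2}+\epsilon}$. The choice $p_0=\frac{d+2}{d}$ is precisely the one that makes the powers of $2^s$ balance, with at most an $\epsilon$-loss or a $\log\lambda$ loss from the number of dyadic pieces. Bourgain's interpolation trick could be used if one needs to remove a logarithm, but the $\lambda^\epsilon$ allowance makes this unnecessary.

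\textbf{Main obstacle.} I expect the hardest step to be the sharp $\ell^2$ bound for $K_s$ uniformly in $q\sim2^s$. Summing over $a$ trivially loses a factor $q$ that destroys the endpoint, so one must exploit cancellation in Kloosterman sums (Weil's bound, $|K(\lambda,\cdot;q)|\lesssim q^{1/2+\epsilon}$) after the multipliers for different $a$ with the same $b/q$ are grouped. I would first try the approach of Hughes, treating the Kloosterman sum as a multiplier on each congruence class and using orthogonality of the arcs. A secondary concern is that $d\geq5$ is needed so that $|S_\lambda|\approx\lambda^{\frac d2-1}$ without divisor losses and so that the singular series converges.
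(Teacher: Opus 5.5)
The paper gives no proof of this statement; it imports it from Han and Yang. Your reduction is correct: Young's inequality handles $p=2$, and since the exponent is affine in $\frac1p$, Riesz--Thorin reduces everything to $p_0=\frac{d+2}{d}$ with target $\lambda^{\frac{d-2}{d+2}+\epsilon}$. The endpoint sketch, however, does not work as written.

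Your kernel bound $A_s\lesssim 2^{s(1+\epsilon)}\lambda^{-1}$ is false. For $x\in S_\lambda$ one has $1=\sigma_\lambda(x)\le\sum_s A_s+\|\mathcal{E}\|_{\ell^\infty}$. Your bounds give $\sum_{2^s\le\sqrt\lambda}A_s\lesssim\lambda^{-\frac12+\epsilon}$ together with a small $\|\mathcal{E}\|_{\ell^\infty}$, which is a contradiction. In the Magyar--Stein--Wainger approximation, the $q$-th kernel is $\lambda^{\frac d2-1}c_q(\|x\|^2-\lambda)$ (a Ramanujan sum) times the mass-one measure on the sphere of radius $\sqrt\lambda$, smoothed at scale $q$. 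It therefore has size $\lambda^{-\frac12}q^{-1}|c_q(\|x\|^2-\lambda)|$ on a shell of width $q$, and $\|K_s\|_{\ell^\infty}\approx 2^s\lambda^{-\frac12}$.

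With this value and your $B_s$, the lowest block ($q\approx 1$) already interpolates to $\lambda^{\frac{3(d-2)}{2(d+2)}}$, which is above the target. Your picture of a kernel of height $\lambda^{-1}$ on the $\sqrt\lambda$-ball describes only the part of each block where $|\xi-b/q|\lesssim\lambda^{-\frac12}$. You need a second dyadic decomposition $|\xi-b/q|\sim 2^j\lambda^{-\frac12}$ that uses the decay $|\widehat{d\sigma}(\zeta)|\lesssim|\zeta|^{-\frac{d-1}{2}}$, as in Littman's argument. The $(s,j)$-block then has $\ell^1\to\ell^\infty$ norm $\lesssim 2^{2s+j}\lambda^{-1}$. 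If its bumps were disjoint, it would have $\ell^2$ norm $\lesssim\lambda^{\frac d2-1}2^{-s(\frac d2-1)}2^{-j\frac{d-1}{2}}$. These interpolate to $\lambda^{\frac{d-2}{d+2}}2^{-\frac{jd}{d+2}}$, and the powers of $2^s$ do cancel, as you expected.

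The real gap is the disjointness you assume. The bumps of width $\approx\frac1q$ attached to all fractions $b/q$ (reduced or not) with $q\sim 2^s$ overlap: a generic $\xi$ lies in $\approx 2^s$ of them. Put differently, whatever is not captured by disjoint major-arc pieces ends up in $\mathcal{E}$. Weyl and Kloosterman estimates control $\mathcal{E}$ only on the Fourier side, not in $\ell^1\to\ell^\infty$. The Kloosterman refinement (this is exactly where Weil's bound enters) gives $\|\widehat{\mathcal{E}}\|_{L^\infty}\lesssim\lambda^{\frac{d-1}{4}+\epsilon}$, while the kernel is only bounded trivially, $\|\mathcal{E}\|_{\ell^\infty}\lesssim 1$. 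Interpolating gives $\|\mathcal{E}\|_{\ell^{p_0}\to\ell^{p_0'}}\lesssim\lambda^{\frac{d-1}{d+2}+\epsilon}$, which misses the target by $\lambda^{\frac{1}{d+2}}$.

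At general $p$ this route succeeds exactly when $p\ge\frac{d+1}{d-1}$, and $\frac{d+1}{d-1}>\frac{d+2}{d}$. That is essentially the range of the earlier circle-method results (Kesler--Lacey, Hughes); the paper itself notes that Hughes's range is weaker than Han--Yang's. Bridging that gap is precisely the new content of the Han--Yang theorem, and your proposal has no mechanism for it. Treating Weil's bound as the key step will not get you there, since it is already accounted for in the exponent $\frac{d-1}{4}$.
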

\subsection{Erd\H{o}s Distinct Distances Problem}
The current best result for $\R^2$ was proved by Guth and Katz \cite{guth2015erdHos}:
\begin{thm}[Guth and Katz]\label{GK}
Let $A\subseteq\R^2$ with $|A|=n$. Then
$$
\left|\Delta^2(A)\right|\gtrsim\frac{n}{\log{n}}
$$
\end{thm}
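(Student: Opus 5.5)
This statement is quoted from \cite{guth2015erdHos} and is used as a black box; if one had to reprove it, the natural route is the Elekes--Sharir framework combined with the polynomial method in $\R^3$. Let $D=\left|\Delta^2(A)\right|$. The plan is to reduce the count of distinct distances to an upper bound for a quadruple count $Q$, and then to bound $Q$ by an incidence theorem for lines in $\R^3$.

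\textbf{Step 1: the quadruple count.} Set
$$
Q=\left|\{(p,q,p',q')\in A^4:\|p-q\|=\|p'-q'\|\neq0\}\right|.
$$
Partition the ordered pairs of distinct points according to their distance. Cauchy--Schwarz then gives
$$
Q\geq\frac{(n^2-n)^2}{D}.
$$
So it suffices to prove $Q\lesssim n^3\log n$.

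\textbf{Step 2: reduction to lines.} Each quadruple counted by $Q$ corresponds to a unique orientation-preserving rigid motion $g$ of $\R^2$ with $g(p)=p'$ and $g(q)=q'$. For each $k\geq2$, let $G_k$ be the set of rigid motions $g$ with $|A\cap g(A)|\geq k$. Then $Q\lesssim\sum_k k\,|G_k|$, so the task becomes bounding $|G_k|$.

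Parametrize the non-translation rigid motions by their rotation centre and the cotangent of half the rotation angle. In these coordinates, the set $\{g:g(p)=p'\}$ is a line $\ell_{pp'}$ in $\R^3$, giving $n^2$ lines in total. A motion lying in $G_k$ is then a point where at least $k$ of these lines meet. The family of lines has two useful structural properties:
\begin{itemize}
\item at most $n$ of the lines lie in any plane or any regulus;
\item the translations, which the parametrization misses, can be handled separately by an elementary argument.
\end{itemize}

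\textbf{Step 3: incidence estimates.} The heart of the proof is the following pair of bounds, for $L$ lines with at most $\sqrt L$ in any plane or regulus:
\begin{itemize}
\item the number of points where at least two lines meet is $\lesssim L^{3/2}$;
\item for $3\leq k\leq\sqrt L$, the number of points where at least $k$ lines meet is $\lesssim L^{3/2}k^{-2}$.
\end{itemize}
With $L=n^2$ these give $|G_k|\lesssim n^3k^{-2}$. The range $k=2$ contributes $O(n^3)$ to $\sum_k k|G_k|$. A dyadic sum over $k\geq3$, where $k$ is at most $n$, contributes $\sum_k k\cdot n^3k^{-2}\approx n^3\log n$. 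This gives $Q\lesssim n^3\log n$, and Step 1 then yields $D\gtrsim n/\log n$.

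\textbf{Main obstacle: proving the incidence bounds.}
\begin{itemize}
\item For $k=2$, I would take a polynomial of degree about $\sqrt L$ vanishing on a suitable sample of the lines. I would then use flecnodal polynomials (the Cayley--Salmon--Monge theorem) to show that the lines lying in its zero set are contained in ruled components. Finally, I would argue that a doubly ruled surface is a plane or a regulus, which is where the non-concentration hypothesis enters.
\item For $k\geq3$, I would use polynomial partitioning with degree about $\sqrt L/k$. The cells are handled by induction or by a Szemer\'edi--Trotter type bound, and the points lying on the zero set are handled using an algebraic analysis of flat and critical points.
\end{itemize}
Controlling these contributions from the zero set of the partitioning polynomial is the hardest step, and I would follow Guth--Katz closely there.
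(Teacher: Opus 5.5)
The paper gives no proof of this statement: it quotes it from Guth--Katz \cite{guth2015erdHos} and uses it only as a black box for the finite Erd\H{o}s distance bound in Theorem~\ref{mainthm}, which is exactly how you treat it. Your outline is an accurate summary of the original Guth--Katz argument: the Elekes--Sharir quadruple count $Q=\sum_{k\ge2}(2k-2)|G_k|$, the encoding of rotations as $n^2$ lines in $\R^3$ with plane/regulus non-concentration, and the bounds $|G_k|\lesssim n^3k^{-2}$ summed to $Q\lesssim n^3\log n$. You also correctly identify the hard incidence theorems and defer them to that paper.
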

The recent breakthrough for $\R^3$ was proved by Tidor, Yu, and Zakharov \cite{tidor2026erdhosdistinctdistancesproblem}:
\begin{thm}[Tidor, Yu, and Zakharov]\label{n3}
Let $A\subseteq\R^3$ with $|A|=n$. Then
$$
\left|\Delta^2(A)\right|\gtrapprox n^{\frac{2}{3}}
$$
\end{thm}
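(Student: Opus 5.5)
This is an external result that the paper cites and uses as a black box (the proof is in \cite{tidor2026erdhosdistinctdistancesproblem}). The following is only the route I would try, not a reconstruction of their argument.

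The plan is the standard energy reduction. Set
$$
Q(A)=\#\left\{(a,b,c,e)\in A^4:\|a-b\|^2=\|c-e\|^2\neq 0\right\}.
$$
Write $m_\lambda=\#\{(a,b)\in A^2:\|a-b\|^2=\lambda\}$. Cauchy--Schwarz gives
$$
n^2-n=\sum_{\lambda\in\Delta^2(A)\setminus\{0\}}m_\lambda\leq\left|\Delta^2(A)\right|^{1/2}Q(A)^{1/2},
$$
so it suffices to prove $Q(A)\lessapprox n^{10/3}$. The lattice $\{1,\dots,n^{1/3}\}^3$ shows that this energy bound would be sharp up to subpolynomial factors.

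Next I would recast $Q(A)$ as an incidence count. For each pair $(c,e)$ with $\rho=\|c-e\|$, the quadruples it contributes are the incidences between $A$ and the spheres $a+\rho\,\mathbb S^2$, $a\in A$. One way to organise this is to view $A\times A\subseteq\R^6$ as points. Then $Q(A)$ counts incidences between these $n^2$ points and the $n^2$ hypersurfaces
$$
\Sigma_{c,e}=\{(a,b):\|a-b\|^2=\|c-e\|^2\}.
$$
Alternatively, in the Elekes--Sharir spirit, one can count pairs of pairs related by a rigid motion of $\R^3$. The trivial incidence bounds give only about $n^{2/3}\cdot n^{-\epsilon}$ times a worse exponent, roughly Solymosi--Vu type bounds such as $n^{3/5}$. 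So the gain must come from a structured incidence theorem.

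The key step is polynomial partitioning in the relevant configuration space, taking a polynomial of degree $D\approx n^{\delta}$ and iterating. Cells are handled by induction on $n$ using the trivial bound (two spheres meet in a circle, and three generic spheres meet in at most two points). The contribution lying on the zero set $Z$ of the partitioning polynomial is split according to the algebraic structure of $Z$. The generic part is handled by a lower-dimensional partitioning. The degenerate parts are those where many points of $A$ lie on a common circle, a common plane, or a common sphere, or where many spheres share a circle. In these cases the distance structure is effectively two-dimensional, and Guth--Katz (Theorem \ref{GK}) or a direct circle count should give enough.

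I expect the main obstacle to be this zero-set analysis. In $\R^3$ the point--sphere incidence problem admits genuinely bad configurations: points concentrated on circles and on surfaces such as quadrics, where spheres can have large common intersections. A naive partition loses a polynomial factor exactly there. My first attempt would be to define a structured family of circles, namely circles containing at least $k$ points of $A$. I would bound the energy from such rich circles separately, using the planar Guth--Katz bound on each plane, and then run the partitioning argument on the remaining, sufficiently generic incidences. The recursion in $D$ should cost only $n^{\epsilon}$ losses, which is exactly the $\gtrapprox$ in the statement.
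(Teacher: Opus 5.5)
The paper gives no proof of this statement. It is quoted from Tidor, Yu, and Zakharov and used only as a black box: for $r=3$ in the finite Erd\H{o}s distance bound, and as the input to the Solymosi--Vu recursion that yields Theorem \ref{sv}. So treating it as an external result matches the paper. Your opening reduction is also correct: with $m_\lambda$ counting ordered pairs, $|\Delta^2(A)|\geq (n^2-n)^2/Q(A)$. Hence $Q(A)\lessapprox n^{10/3}$ would suffice, and the lattice shows it is sharp.

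Read as a proof, however, everything after that reduction is a plan, and the tools you name cannot produce the energy bound.
\begin{itemize}
\item The $\R^6$ reformulation is vacuous. $\Sigma_{c,e}$ depends only on $\lambda=\|c-e\|^2$, and the $\Sigma_\lambda$ are the level sets of $(a,b)\mapsto\|a-b\|^2$, so they are pairwise disjoint or equal. The incidence graph is therefore a disjoint union of complete bipartite graphs $K_{m_\lambda,m_\lambda}$, and the incidence count just returns $\sum_\lambda m_\lambda^2$.
\item The point--sphere formulation works one radius at a time. Any uniform bound $m_\lambda\lesssim U(n)$ only gives $Q(A)\lesssim U(n)\,n^2$. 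Reaching $n^{10/3}$ this way needs $U(n)\lessapprox n^{4/3}$, which is the open unit distance conjecture in $\R^3$. Partitioning with the input ``two spheres meet in a circle, three in at most two points'' yields only $U(n)\lesssim n^{3/2}$-type bounds. That gives $|\Delta^2(A)|\gtrsim n^{1/2}$, weaker than the exponent $\frac35$ that Theorem \ref{svrecur}(a) already extracts from Theorem \ref{GK}.
\item The Elekes--Sharir alternative is not developed. In $\R^3$ it leads to the sets $\{g:g(a)=c\}$, which are three-dimensional cosets of point stabilizers in the six-dimensional group of rigid motions, meeting along curves. You give no incidence theorem for that setting.
\end{itemize}

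The missing idea is a mechanism that couples different radii. By dyadic summation, $Q(A)\lessapprox n^{10/3}$ is equivalent, up to logarithms, to $\#\{\lambda:m_\lambda\geq k\}\lessapprox n^{10/3}/k^2$ for every $k$. Since $\#\{\lambda:m_\lambda\geq k\}\leq n^2/k$ trivially, this is automatic for $k\leq n^{4/3}$. The entire content is therefore that few distances can each be realized $k\gg n^{4/3}$ times, a three-dimensional analogue of the Guth--Katz rich-lines estimate. Nothing in your zero-set analysis supplies this.

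Your handling of degenerate cases also misses actual configurations. Take points on a line together with points on a circle centred on that line in a perpendicular plane. Every line point is equidistant from every circle point, but these pairs do not lie in the plane of the rich circle, so applying Guth--Katz plane by plane never sees them. This configuration contributes only $O(n^3)$ to $Q(A)$, so it is not a counterexample, but your sketch has no step that isolates and bounds it.
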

Solymosi and Vu \cite{solymosi2008near} showed the following recursion for the Erd\H{o}s distance problem:
\begin{thm}[Solymosi and Vu]\label{svrecur}
Suppose that every finite set $P\subseteq\R^{d_0}$ satisfies $\left|\Delta^2(P)\right|\gtrsim |P|^{\alpha_{d_0}}$. Then for every $d\geq d_0$ and every finite set $A\subseteq\R^d$ with $|A|=n$,
\begin{enumerate}[label=(\alph*)]
    \item One has
    $$
    \left|\Delta^2(A)\right|\gtrsim n^{\frac{2d}{(d+d_0+1)(d-d_0)+\frac{2d_0}{\alpha_{d_0}}}}
    $$
    \item Moreover, if $d-d_0$ is even, then
    $$
    \left|\Delta^2(A)\right|\gtrsim n^{\frac{2(d+1)}{(d+d_0+2)(d-d_0)+\frac{2(d_0+1)}{\alpha_{d_0}}}}
    $$
\end{enumerate}
\end{thm}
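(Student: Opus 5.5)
Since Theorem \ref{svrecur} is quoted from \cite{solymosi2008near}, we only sketch how we would reconstruct it. The plan is an induction on the ambient dimension, one dimension at a time. Writing $\alpha_d$ for the exponent we want in $\R^d$, the closed form in (a) is equivalent to the recursion
$$
\frac{d}{\alpha_d}=\frac{d-1}{\alpha_{d-1}}+d .
$$
Indeed, $(d+d_0+1)(d-d_0)-(d+d_0)(d-1-d_0)=2d$, and the recursion started at $d_0$ telescopes to the stated formula. So the core is a one-step lemma: if every finite $P\subseteq\R^{d-1}$ satisfies $|\Delta^2(P)|\gtrsim|P|^{\beta}$, then every $A\subseteq\R^d$ with $|A|=n$ satisfies $|\Delta^2(A)|\gtrsim n^{\alpha}$, where $\frac{d}{\alpha}=\frac{d-1}{\beta}+d$. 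Part (b) should follow from the analogous two-step lemma, passing from $\R^d$ directly to $\R^{d-2}$. For this we would redo the same computation with codimension-$2$ slices and check that the resulting telescoping sum is the one in (b).

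For the one-step lemma, let $t=|\Delta^2(A)|$. For any fixed $x\in A$, the set $A$ is covered by the $t$ spheres centred at $x$ with radii in $\Delta^2(A)$. We would split into two cases.

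\textbf{Rich case.} Some $(d-1)$-dimensional object, either a hyperplane or a sphere $S$, contains at least $m$ points of $A$, with $m$ a threshold to be optimised. If it is a hyperplane, the hypothesis applied inside it gives $t\gtrsim m^{\beta}$ directly. If it is a sphere, we would compose a translation and an inversion centred at a point of $S$ to map $S$ to a hyperplane. The inversion distorts distances by the factor $\|x\|\|y\|$, so we must reduce to the hyperplane statement, for instance by restricting to an annulus in which $\|x\|$ takes few values and applying the hypothesis there.

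\textbf{Poor case.} No sphere or hyperplane contains more than $m$ points of $A$. Then we would double count incidences between points of $A$ and the family of $\le nt$ spheres $\{S(x,\rho): x\in A,\ \rho\in\Delta^2(A)\}$; this family has exactly $n^2$ incidences. We would bound these incidences from above using the non-degeneracy. A clean way is to count triples $(x,x',y)$ with $\|x-y\|=\|x'-y\|$. Such a $y$ lies on the bisector hyperplane of $x$ and $x'$, which carries at most $m$ points, and the bisectors of distinct pairs are distinct generic hyperplanes. Cauchy--Schwarz then gives $n^3/t\lesssim\#\{\text{triples}\}$, while iterating the same argument inside bisectors gives a power bound in $n$ and $m$. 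Balancing this against $t\gtrsim m^{\beta}$ by choosing $m=n^{\gamma}$ should produce exactly $\frac{d}{\alpha}=\frac{d-1}{\beta}+d$. We would verify the exponent arithmetic only at the end.

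The main obstacle is the poor case. We expect to need an incidence bound for points and spheres, or for points and bisector hyperplanes, in $\R^d$ under the non-degeneracy assumption that no hyperplane or sphere is too rich. This bound must be sharp enough to give the quadratic term $(d+d_0+1)(d-d_0)$ and not something weaker. Our first attempt would be the Clarkson et al.\ type bound for points and spheres in general position, used in Solymosi--Vu's original argument. If that loses too much, we would fall back on iterating the slicing, applying the dichotomy again inside each rich bisector, and on tracking the logarithmic losses only up to constants.
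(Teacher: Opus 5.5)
The paper does not prove this theorem; it quotes it from \cite{solymosi2008near}. Your sketch therefore has to stand on its own, and its decisive step is missing. Your telescoping check is correct: (a) is equivalent to the one-step recursion $\frac{d}{\alpha_d}=\frac{d-1}{\alpha_{d-1}}+d$. But you never prove the one-step lemma. The only poor-case estimates you actually derive give $t\ge (n-1)/m$. This comes either from a single centre ($A\setminus\{x\}$ lies on $t$ spheres with at most $m$ points each) or from your triple count, since $n(n-1)^2/t\le\#\{(x,x',y)\}\le n^2+n^2m$. Balancing $t\gtrsim n/m$ against $t\gtrsim m^{\beta}$ gives only $\frac{d}{\alpha}=\frac{d}{\beta}+d$, which is strictly weaker than the target. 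For your dichotomy to produce $\frac{d}{\alpha}=\frac{d-1}{\beta}+d$ for every $\beta$, the poor case has to deliver $t\gtrsim n\,m^{-(d-1)/d}$, i.e.\ $n^d\lesssim t^d m^{d-1}$. That extra factor $m^{1/d}$ is the whole content of the recursion, and ``verify the exponent arithmetic only at the end'' does not supply it. The heuristic you lean on is also false: bisectors of distinct pairs are neither distinct nor generic. A hyperplane $H$ bisects every pair $\{x,\rho_H(x)\}\subseteq A$, where $\rho_H$ is the reflection in $H$. In the grid $\{1,\ldots,k\}^d$, for instance, the single hyperplane $\{y_1=\frac{k+1}{2}\}$ bisects $\asymp n$ pairs. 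So no incidence bound that assumes distinct, generic hyperplanes or spheres can be applied.

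The rich-sphere branch also fails as written. Inversion about $c\in A\cap S$ sends $\|x-y\|^2$ to $\|x-y\|^2/(\|x-c\|^2\|y-c\|^2)$. The image can therefore determine about $t^3$ squared distances, and the hypothesis gives only $t^3\gtrsim m^{\beta}$. If instead you restrict to one value of $\|x-c\|$, the points lie on $S\cap S(c,\rho)$, a $(d-2)$-sphere inside a hyperplane, so no inversion is needed. But this keeps only $m/t$ of the points and gives $t\gtrsim (m/t)^{\beta}$. Either loss ruins the exponent. The argument has to be arranged so that only hyperplanes need to be poor; these contain all bisectors and all pairwise intersections of spheres.

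Part (b) is likewise not obtainable by ``redoing the computation''. Its recursion is $\frac{d+1}{\alpha_d}=\frac{d-1}{\alpha_{d-2}}+2d$, whereas two steps of (a) give $\frac{d}{\alpha_d}=\frac{d-2}{\alpha_{d-2}}+2d-1$. A direct comparison shows (b) is stronger exactly when $\alpha_{d-2}>\frac{2}{d-1}$ and weaker otherwise. So (b) needs a genuinely different codimension-two estimate. In your format it would read $t^{2d}\gtrsim n^{d+1}m^{-(d-1)}$, with $m$ the maximal number of points in a $(d-2)$-flat. Moreover, the codimension-two objects that arise naturally are intersections of two spheres, which are $(d-2)$-spheres spanning $(d-1)$-flats. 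The $\R^{d-2}$ hypothesis does not apply to them.
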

The current best result for higher dimensions is applying Theorem \ref{svrecur} to Theorem \ref{n3} (by applying Theorem \ref{svrecur} (b) to Theorem \ref{n3} to get results for odd dimensions $d\geq3$ and then applying Theorem \ref{svrecur} (a) to these results for odd dimensions $d\geq3$ to get results for even dimensions $d+1$):
\begin{thm}[The current best result on the Erd\H{o}s distance problem]\label{sv}
Let $d\geq3$ and $A\subseteq\R^d$ with $|A|=n$. Then
$$
\left|\Delta^2(A)\right|\gtrapprox n^{\frac{2(d+1)}{d^2+2d+c_d}}
$$
where
$$
c_d=
\left\{
\begin{array}{ll}
    -3+\frac{4}{d^2}&\text{if }d\text{ is even} \\
    -3&\text{if }d\text{ is odd}
\end{array}
\right.
$$
\end{thm}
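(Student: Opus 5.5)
The plan is to obtain Theorem \ref{sv} by feeding Theorem \ref{n3} into the Solymosi--Vu recursion, Theorem \ref{svrecur}, in two stages. Part (b) handles odd $d$ and part (a) then handles even $d$. The only non-formal issue is that Theorem \ref{n3} gives $\gtrapprox$ rather than $\gtrsim$. I would handle this first by checking that Theorem \ref{svrecur} remains valid with $\gtrapprox$ in both hypothesis and conclusion. The recursion proof is a finite chain of incidence and pigeonholing estimates. Replacing the input bound $|\Delta^2(P)|\gtrsim|P|^{\alpha_{d_0}}$ by $|\Delta^2(P)|\gtrsim_\epsilon |P|^{\alpha_{d_0}-\epsilon}$ changes the output exponent continuously in $\alpha_{d_0}$. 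So the conclusion holds with a loss $n^{-C\epsilon}$, and since $\epsilon$ is arbitrary this is again a $\gtrapprox$ bound. Equivalently, one may apply Theorem \ref{svrecur} as stated with $\alpha_{d_0}-\epsilon$ in place of $\alpha_{d_0}$ and let $\epsilon\to0$. I expect this to be the main point requiring care, although it is routine.

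\textbf{Odd $d\geq3$.} For $d=3$ the claim is exactly Theorem \ref{n3}, since $\frac{2\cdot4}{9+6-3}=\frac23$. For odd $d\geq5$, apply Theorem \ref{svrecur}(b) with $d_0=3$ and $\alpha_3=\frac23$, which is allowed because $d-3$ is even. Then $\frac{2(d_0+1)}{\alpha_{d_0}}=12$, and the denominator is
\[
(d+5)(d-3)+12=d^2+2d-3 .
\]
This gives the exponent $\frac{2(d+1)}{d^2+2d-3}$, which is the claim with $c_d=-3$.

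\textbf{Even $d\geq4$.} Apply Theorem \ref{svrecur}(a) with $d_0=d-1$, which is odd. By the previous step we may take
\[
\alpha_{d-1}=\frac{2d}{(d-1)^2+2(d-1)-3}=\frac{2d}{d^2-4} .
\]
Since $d-d_0=1$, the denominator in (a) is
\[
2d+\frac{2(d-1)}{\alpha_{d-1}}=2d+\frac{(d-1)(d^2-4)}{d},
\]
so the exponent is $\frac{2d^2}{d^3+d^2-4d+4}$. It remains to check that this equals $\frac{2(d+1)}{d^2+2d-3+4/d^2}=\frac{2(d+1)d^2}{d^4+2d^3-3d^2+4}$. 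Cross-multiplying, this amounts to the identity
\[
(d+1)(d^3+d^2-4d+4)=d^4+2d^3-3d^2+4,
\]
which holds by direct expansion, since the linear terms $4d-4d$ cancel. This gives $c_d=-3+\frac{4}{d^2}$ for even $d$ and completes the proof.
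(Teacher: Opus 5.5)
Your proposal is correct and is the paper's route: Theorem \ref{svrecur}(b) with $d_0=3$ and $\alpha_3=\frac23$ from Theorem \ref{n3} gives $c_d=-3$ for odd $d$, and then Theorem \ref{svrecur}(a) with $d_0=d-1$ gives $c_d=-3+\frac{4}{d^2}$ for even $d$; your algebra, including the identity $(d+1)(d^3+d^2-4d+4)=d^4+2d^3-3d^2+4$, is right. The paper leaves the passage from $\gtrsim$ to $\gtrapprox$ implicit, and your handling of it is sound: run the recursion with $\alpha_{d_0}-\epsilon$, then let $\epsilon\to0$.
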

}
\section{Proofs of the Main Results}\label{P}
{\subsection{Proof of Theorem \ref{mainthm}}
First, we prove the $\ell^p$-improving bound: our proof is motivated by the Fourier analytic methods in \cite{magyar2008distance,lyall2020distances}.

For every $f_0,f_1\colon\Z^d\to\C$, we define
$$
T_{(f_0,f_1)}(\lambda)=\sum_{x\in\Z^d}f_0(x)(f_1\ast\sigma_{\lambda})(x)
$$
Suppose that $A\subseteq\Z^d$. For every $N\in\N$ and every $t\in\Z^d$, let
$$
A_{N,t}=A\cap(t+Q_N)\quad\quad\quad\Delta_{d,N}^2(A)=\Delta^2(A)\cap\left\{1,2,\ldots,dN^2\right\}
$$
Our first lemma is a lower bound for the average of this operator when $f_0=f_1=\bm{1}_{A_{N,t}}$.
\begin{lemma}\label{firstlemma}
    Let $A\subseteq\Z^d$. For every $N\in\N$ and every $t\in\Z^d$, one has
    $$
    \sum_{\lambda\in{\Delta_{d,N}^2(A)}}T_{\left(\bm{1}_{A_{N,t}},\bm{1}_{A_{N,t}}\right)}(\lambda)=\left|A_{N,t}\right|\big(\left|A_{N,t}\right|-1\big)
    $$
\end{lemma}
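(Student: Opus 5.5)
The plan is to unfold the definition of $T$ and recognize the sum over $\lambda$ as a count of ordered pairs of distinct points of $A_{N,t}$. Write $f=\bm{1}_{A_{N,t}}$. Since $\sigma_\lambda=\bm{1}_{S_\lambda}$,
$$
(f\ast\sigma_\lambda)(x)=\sum_{y\in\Z^d}f(y)\sigma_\lambda(x-y)=\#\left\{y\in A_{N,t}:\|x-y\|^2=\lambda\right\}.
$$
Consequently
$$
T_{(f,f)}(\lambda)=\#\left\{(x,y)\in A_{N,t}\times A_{N,t}:\|x-y\|^2=\lambda\right\}.
$$
All sums involved are finite because $A_{N,t}$ is finite.

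Next I sum over $\lambda\in\Delta^2_{d,N}(A)$. Because the sets $\{(x,y):\|x-y\|^2=\lambda\}$ are disjoint for distinct $\lambda$, the sum counts the ordered pairs $(x,y)\in A_{N,t}^2$ with $\|x-y\|^2\in\Delta^2_{d,N}(A)$. It then remains to show that this condition holds exactly when $x\neq y$.

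For the forward direction, $0\notin\{1,\dots,dN^2\}$, so pairs with $x=y$ are excluded. For the converse, take $x\neq y$ in $A_{N,t}\subseteq t+Q_N$. Then $\|x-y\|^2$ is a positive integer, and it lies in $\Delta^2(A)$ because $x,y\in A$. Each coordinate difference satisfies $|x_i-y_i|\le N$, since both coordinates lie in $[t_i-\frac N2,t_i+\frac N2]$. Hence $\|x-y\|^2\le dN^2$, and so $\|x-y\|^2\in\Delta^2_{d,N}(A)$.

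The count is therefore the number of ordered pairs of distinct points, namely $|A_{N,t}|(|A_{N,t}|-1)$. The only point needing care is the diameter bound, which is what dictates the cutoff $dN^2$ in the definition of $\Delta^2_{d,N}(A)$; no real obstacle is expected.
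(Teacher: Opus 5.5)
Your proof is correct and takes essentially the same approach as the paper. The paper fixes $x\in A_{N,t}$, uses the disjoint decomposition $\bigsqcup_{\lambda\in\Delta_{d,N}^2(A)}\bigl(A_{N,t}\cap(x+S_\lambda)\bigr)=A_{N,t}\setminus\{x\}$, and then sums over $x$; this is your ordered-pair count, and your explicit check that $|x_i-y_i|\le N$ forces $\|x-y\|^2\le dN^2$ supplies the step the paper leaves implicit.
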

\begin{proof}
For every $x\in A_{N,t}$, we know
{$$
\bigsqcup_{\lambda\in\Delta_{d,N}^2(A)}\big(A_{N,t}\cap(x+S_{\lambda})\big)=A_{N,t}\setminus\{x\}
$$}
Hence, we have
$$
\sum_{\lambda\in \Delta_{d,N}^2(A)}\left(\bm{1}_{A_{N,t}}\ast \sigma_{\lambda}\right)(x)=\sum_{\lambda\in\Delta_{d,N}^2(A)}\left|A_{N,t}\cap(x+S_{\lambda})\right|=\left|A_{N,t}\right|-1
$$
Then
\begin{align*}
\sum_{\lambda\in\Delta_{d,N}^2(A)}T_{\left(\bm{1}_{A_{N,t}},\bm{1}_{A_{N,t}}\right)}(\lambda)&=\sum_{x\in A_{N,t}}\sum_{\lambda\in\Delta_{d,N}^2(A)}\left|A_{N,t}\cap(x+S_{\lambda})\right|=\left|A_{N,t}\right|\big(\left|A_{N,t}\right|-1\big)
\end{align*}
\end{proof}
Our second lemma provides an upper bound for the same average using $\ell^p$-improving inequalities for discrete spherical averages for $d\geq5$.
\begin{lemma}\label{secondlemma}
Let $d\geq5$. For every $\epsilon>0$ and every $\frac{d+2}{d}\leq p\leq2$, one has
$$
\sum_{\lambda\in\Delta_{d,N}^2(A)}T_{\left(\bm{1}_{A_{N,t}},\bm{1}_{A_{N,t}}\right)}(\lambda)\lesssim_{d,p,\epsilon}\left|\Delta_{d,N}^2(A)\right|\left|A_{N,t}\right|^{\frac{2}{p}}N^{\frac{2}{p'}(d-(1-\epsilon)p')}
$$
\end{lemma}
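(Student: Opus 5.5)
Apply Hölder to each summand $T_{(f,f)}(\lambda)$ with $f=\bm{1}_{A_{N,t}}$, then invoke Theorem \ref{lp} and sum over $\lambda$.

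\emph{Hölder step.} For each $\lambda\in\Delta_{d,N}^2(A)$, Hölder's inequality with exponents $p,p'$ gives
$$
T_{(f,f)}(\lambda)=\sum_{x}f(x)(f\ast\sigma_\lambda)(x)\le \|f\|_{\ell^p}\,\|f\ast\sigma_\lambda\|_{\ell^{p'}}.
$$

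\emph{Applying Theorem \ref{lp}.} Since $d\ge5$ and $\frac{d+2}{d}\le p\le2$, Theorem \ref{lp} applies and yields
$$
T_{(f,f)}(\lambda)\lesssim_{d,p,\epsilon}\lambda^{\epsilon-\frac d2\left(\frac2p-1\right)}|S_\lambda|\,\|f\|_{\ell^p}^2=\lambda^{\epsilon-\frac d2\left(\frac2p-1\right)}|S_\lambda|\,|A_{N,t}|^{\frac2p}.
$$

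\emph{Sphere count.} Use the sphere count $|S_\lambda|\lesssim\lambda^{\frac d2-1}$ for $d\ge5$. The exponent of $\lambda$ becomes
$$
\epsilon-\frac dp+\frac d2+\frac d2-1=\epsilon+d\left(1-\frac1p\right)-1=\epsilon+\frac{d}{p'}-1 .
$$

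\emph{Bounding $\lambda$.} Since $p'\ge2$, we have $\frac{d}{p'}-1\ge0$ only when $p'\le d$. This holds here because $p\ge\frac{d+2}{d}$ forces $p'\le\frac{d+2}{2}\le d$. So the exponent is nonnegative, and we may bound $\lambda\le dN^2$, which gives $\lambda^{\epsilon+\frac d{p'}-1}\lesssim_d N^{2\epsilon+\frac{2d}{p'}-2}$.

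\emph{Matching the stated power.} The target exponent is
$$
\frac{2}{p'}\bigl(d-(1-\epsilon)p'\bigr)=\frac{2d}{p'}-2+2\epsilon,
$$
which agrees with the one obtained above.

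\emph{Summation.} Summing the uniform bound over the $|\Delta_{d,N}^2(A)|$ values of $\lambda$ gives the lemma.

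There is no real obstacle; the bookkeeping is straightforward. The only point to verify is the sign of the $\lambda$-exponent, which is what justifies replacing $\lambda$ by its maximum $dN^2$.
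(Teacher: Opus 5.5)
Your proof is correct and follows the paper's argument. It uses the same ingredients: H\"older's inequality, the Han--Yang estimate (Theorem \ref{lp}), the sphere count $|S_\lambda|\lesssim\lambda^{\frac d2-1}$, and the replacement of $\lambda$ by $dN^2$, which is justified by the positivity of $\epsilon+\frac{d}{p'}-1$. The only difference is cosmetic: you apply H\"older separately for each $\lambda$ and then sum, while the paper applies H\"older jointly over $(\lambda,x)$ and then extracts the maximum over $\lambda$, and both give the identical bound.
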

\begin{proof}
By Hölder's inequality and Theorem \ref{lp}, we know\footnote{Note that $\epsilon-\frac{d}{2}\left(\frac{2}{p}-1\right)+\frac{d}{2}-1=\frac{d}{p'}-(1-\epsilon)>0$.}
\begin{align*}
\sum_{\lambda\in\Delta_{d,N}^2(A)}T_{\left(\bm{1}_{A_{N,t}},\bm{1}_{A_{N,t}}\right)}(\lambda)&\leq\left|\Delta_{d,N}^2(A)\right|^{\frac{1}{p}}\left|A_{N,t}\right|^{\frac{1}{p}}\left(\sum_{\lambda\in\Delta_{d,N}^2(A)}\left\|\bm{1}_{A_{N,t}}\ast\sigma_{\lambda}\right\|_{\ell^{p'}(\Z^d)}^{p'}\right)^{\frac{1}{p'}}\\
    &\lesssim_{d,p,\epsilon}\left|\Delta_{d,N}^2(A)\right|^{\frac{1}{p}}\left|A_{N,t}\right|^{\frac{1}{p}}\left\|\bm{1}_{A_{N,t}}\right\|_{\ell^{p}(\Z^d)}\left(\sum_{\lambda\in\Delta_{d,N}^2(A)}\left(\lambda^{\epsilon-\frac{d}{2}\left(\frac{2}{p}-1\right)+\frac{d}{2}-1}\right)^{p'}\right)^{\frac{1}{p'}}\\
    &{\leq\left|\Delta_{d,N}^2(A)\right|^{\frac{1}{p}}\left|A_{N,t}\right|^{\frac{2}{p}}\left(\left|\Delta_{d,N}^2(A)\right|\cdot\max_{\lambda\in\Delta_{d,N}^2(A)}\left(\lambda^{\epsilon-\frac{d}{2}\left(\frac{2}{p}-1\right)+\frac{d}{2}-1}\right)^{p'}\right)^{\frac{1}{p'}}}\\
    &\leq\left|\Delta_{d,N}^2(A)\right|^{\frac{1}{p}}\left|A_{N,t}\right|^{\frac{2}{p}}\left(\left|\Delta_{d,N}^2(A)\right|\cdot(dN^2)^{d-(1-\epsilon)p'}\right)^{\frac{1}{p'}}\\
    &\lesssim_{d,p,\epsilon}\left|\Delta_{d,N}^2(A)\right|\left|A_{N,t}\right|^{\frac{2}{p}}N^{\frac{2}{p'}(d-(1-\epsilon)p')}
\end{align*}
\end{proof}
By Lemma \ref{firstlemma} and \ref{secondlemma}, we immediately obtain the following corollary:
\begin{cor}\label{cordensity}
Let $d\geq5$. For every $\epsilon>0$ and every $\frac{d+2}{d}\leq p\leq2$, one has
$$
\left|\Delta_{d,N}^2(A)\right|\gtrsim_{d,p,\epsilon}\frac{\left|A_{N,t}\right|^{\frac{2}{p'}}}{N^{\frac{2}{p'}(d-(1-\epsilon)p')}}
$$
which implies
$$
\dim_{\mathcal{UB}}^{\text{dis}}(A)>\theta p'+(d-p')\Longrightarrow \mathcal{N}_{\theta}^{*}\big(\Delta^2(A)\big)>0
$$
\end{cor}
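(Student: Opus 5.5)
Combining Lemma \ref{firstlemma} with Lemma \ref{secondlemma} for a fixed $N$ and $t$ gives
$$
\left|A_{N,t}\right|\big(\left|A_{N,t}\right|-1\big)\lesssim_{d,p,\epsilon}\left|\Delta_{d,N}^2(A)\right|\left|A_{N,t}\right|^{\frac{2}{p}}N^{\frac{2}{p'}(d-(1-\epsilon)p')}.
$$
When $|A_{N,t}|\geq2$ we have $|A_{N,t}|(|A_{N,t}|-1)\geq\frac12|A_{N,t}|^2$. Dividing by $|A_{N,t}|^{2/p}$ and using $2-\frac2p=\frac{2}{p'}$ yields the first displayed inequality of Corollary \ref{cordensity}. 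If $|A_{N,t}|\leq1$, the right-hand side is $\lesssim1$ and the inequality only matters when it forces $|\Delta^2_{d,N}(A)|\geq1$, so this trivial case can be set aside. Large $|A_{N,t}|$ is the only regime that matters below.

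For the implication, fix $\theta$ and suppose $\dim_{\mathcal{UB}}^{\text{dis}}(A)>\theta p'+(d-p')$. I would choose $\theta_0$ with $d\theta_0$ strictly between $\theta p'+d-p'$ and $\dim_{\mathcal{UB}}^{\text{dis}}(A)$. By the definition of upper discrete Banach dimension, $\delta_{\theta_0}^{*}(A)>0$. Hence there are $c>0$, infinitely many $N$, and translates $t_N$ with $|A_{N,t_N}|\geq c|Q_N|^{\theta_0}\gtrsim N^{d\theta_0}$. Here I use that the $\sup$ over $\theta$ with positive density is downward closed: positivity at some $\theta_1>\theta_0$ implies positivity at $\theta_0$.

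Plugging this into the first inequality along that sequence gives
$$
\left|\Delta_{d,N}^2(A)\right|\gtrsim N^{\frac{2}{p'}\left(d\theta_0-d+(1-\epsilon)p'\right)}.
$$
Since $\Delta_{d,N}^2(A)\subseteq\Delta^2(A)\cap[-\frac{M}{2},\frac{M}{2}]$ with $M=2dN^2$, which has about $N^2$ integers, it suffices that the exponent is at least $2\theta$. That is, we need $d\theta_0-d+(1-\epsilon)p'\geq\theta p'$, or $d\theta_0>\theta p'+d-p'+\epsilon p'$. This holds for $\epsilon$ small by the strict choice of $\theta_0$. Then
$$
\frac{|\Delta^2(A)\cap[-M/2,M/2]|}{|[-M/2,M/2]\cap\Z|^{\theta}}\gtrsim1
$$
along the sequence $M=2dN^2$, so the $\limsup$ is positive, giving $\mathcal{N}_\theta^{*}(\Delta^2(A))>0$.

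The step that needs care is this last one: the density is defined along all $N$ through symmetric intervals, while we only control a sparse sequence and the range $\{1,\dots,dN^2\}$. Since a $\limsup$ only needs one sequence and the interval lengths are comparable to $N^2$ up to constants depending on $d$, I expect this to be harmless. The other point to absorb is the $\epsilon$-loss, which is handled by the strict inequality in the hypothesis.
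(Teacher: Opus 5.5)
Your proposal is correct and follows the paper's route: the paper obtains the corollary by combining Lemma \ref{firstlemma} with Lemma \ref{secondlemma} and leaves the density implication implicit, and you fill it in correctly by choosing $\theta_0$ strictly inside the gap and then taking $\epsilon$ small. Setting aside $|A_{N,t}|\le 1$ is in fact necessary rather than merely convenient, because the first display can genuinely fail there (e.g.\ when $A$ is a single point, so $\Delta^2_{d,N}(A)$ is empty while the right-hand side is positive); it should therefore be read for $|A_{N,t}|\ge 2$, which is the only regime your argument uses.
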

The $\ell^p$-improving bound follows immediately from Corollary \ref{cordensity} with $p=\frac{d+2}{d}$.

Second, we show the elementary sphere bound: by the proof in Lemma \ref{firstlemma}, we know
$$
\left|A_{N,t}\right|-1=\sum_{\lambda\in\Delta_{d,N}^2(A)}\left|A_{N,t}\cap(x+S_{\lambda})\right|=\sum_{\lambda\in \Delta_{d,N}^2(A)}\left|(A_{N,t}-x)\cap S_{\lambda}\right|\lessapprox\left|\Delta_{d,N}^2(A)\right|N^{\max\{0,r-2\}}
$$
which implies the elementary sphere bound.

Finally, the finite Erd\H{o}s distance bound follows from applying Theorem \ref{GK} and Theorem \ref{sv} to $A_{N,t}$ with $d$ replaced by $r$ since $A_{N,t}$ lies in an $r$-dimensional affine subspace of $\R^d$.
}
{\subsection{Proof of Theorem \ref{pinmainthm}}
Let $A\subseteq\Z^d$. For every $N\in\N$ and every $x\in\Z^d$, we define
$$
A_N=A\cap Q_N\quad\quad\quad\Delta_{d,N}^2(A,x)=\Delta^2(A,x)\cap\left\{1,2,\ldots,dN^2\right\}
$$
\begin{lemma}\label{ptl}
Let $\dim_{\R}\left(\mathrm{aff}(A)\right)\geq2$. For every integer $0\leq K\leq\dim_{\R}\left(\mathrm{aff}(A)\right)-2$, one has
$$
\dim_{\R}\left(\mathrm{aff}\left(E_{\theta}(A)\right)\right)\geq K\Longrightarrow
\dim_{\mathcal{UM}}^{\text{dis}}(A)\leq\dim_{\R}\left(\mathrm{aff}(A)\right)-2+2\theta+K(2\theta-1)
$$
\end{lemma}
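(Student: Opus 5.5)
We argue directly. Write $r=\dim_{\R}(\mathrm{aff}(A))$ and assume $\dim_{\R}(\mathrm{aff}(E_{\theta}(A)))\geq K$. Then $E_{\theta}(A)$ contains $K+1$ affinely independent points $x_0,\ldots,x_K$, and by definition $\mathcal{N}_{\theta}^{*}\big(\Delta^2(A,x_i)\big)=0$ for each $i$. So for every $\eta>0$ and all large $N$ we have $|\Delta_{d,N}^2(A,x_i)|\leq \eta N^{\theta}$; any loss from comparing $|Q_N\cap\Z|$ with $N$, or from the range $\{1,\ldots,dN^2\}$ versus $[-N/2,N/2]$, is absorbed by rescaling $N$. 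Note the scaling: squared distances from $x_i$ to $A_N$ are at most $\approx dN^2$. In density terms this says the number of squared distances realized in $\{1,\ldots,M\}$ is $o(M^{\theta})$ with $M\approx N^2$, so $|\Delta^2_{d,N}(A,x_i)|=o(N^{2\theta})$. This exponent $2\theta$ matches the thresholds in Theorem \ref{pinmainthm}.

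The key step is a fibre count. Consider the map
$$
\Phi\colon A_N\to \prod_{i=0}^K\Delta_{d,N}^2(A,x_i)\cup\{0\},\qquad y\mapsto \big(\|y-x_0\|^2,\ldots,\|y-x_K\|^2\big).
$$
Its image has size $o\big(N^{2\theta(K+1)}\big)$. Each fibre lies in the intersection of $\mathrm{aff}(A)$ with $K+1$ spheres centered at the affinely independent points $x_i$. Subtracting the equation for $x_0$ from the others gives $K$ linearly independent linear equations in $y$, because the vectors $x_i-x_0$ are independent. Hence each fibre lies in an integer affine subspace $H$ with $\dim_{\R}(H)=r-K$, intersected with one sphere $x_0+S_\lambda$ with $\lambda\lesssim N^2$. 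We may assume $x_0\in H$ after translating by $x_0$ and the rational subspace.

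We bound each fibre with the generalized sphere count of Section \ref{MT}:
$$
|H\cap(x_0+S_\lambda)|\lessapprox \lambda^{\frac{r-K}{2}-1}\lessapprox N^{r-K-2}\quad\text{when }r-K\geq2.
$$
The hypothesis $K\leq r-2$ guarantees $r-K\geq2$. In the case $r-K=2$ the bound is $N^{\epsilon}$, which is consistent. Combining the image bound with the fibre bound gives
$$
|A_N|\lessapprox N^{r-K-2}\cdot N^{2\theta(K+1)}=N^{\,r-2+2\theta+K(2\theta-1)}.
$$
This holds for all large $N$. Therefore $\mathcal{N}_{\theta'}^{*}(A)=0$ for every $\theta'$ with $d\theta'>r-2+2\theta+K(2\theta-1)$, and so $\dim_{\mathcal{UM}}^{\text{dis}}(A)\leq r-2+2\theta+K(2\theta-1)$.

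The main obstacle is the sphere-count step: the fibre must be a genuine lattice sphere inside a rank-$(r-K)$ integer affine subspace, with the count uniform in the center. As Remark \ref{affrk} warns, the induced quadratic form on $H$ need not be the standard one, so we rely on the stated bound for $|H\cap S_\lambda|$ applied to the translate $H-x_0$. The center $x_0$ is an integer point, so $H-x_0$ is still an integer affine subspace. We also need to track that the fibre's sphere radius $\lambda$ satisfies $\lambda\lesssim N^2$ uniformly, which holds because $x_0$ is fixed and $y\in Q_N$. A minor point is the $0$ coordinate, which occurs only at $y=x_i$ and is harmless.
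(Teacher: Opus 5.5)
Your argument is correct and is essentially the paper's proof. It uses the same distance-tuple map $\Phi$ on $A_N$, the image bound $o(N^{2\theta(K+1)})$ from the $K+1$ affinely independent bad pins, and the fibre bound $\lessapprox N^{r-K-2}$. That fibre bound comes from cutting $\mathrm{aff}(A)$ with the $K$ independent linear equations and counting lattice points on the remaining sphere in an $(r-K)$-dimensional integer affine subspace.

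There are two cosmetic slips. The opening bound should read $|\Delta_{d,N}^2(A,x_i)|\leq\eta N^{2\theta}$, which you correct a few lines later. Also, $x_0$ need not lie in $H$, but nothing requires it: the affine-subspace sphere count applied to $H-x_0$ is what you actually use.
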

\begin{proof}[Proof of Lemma \ref{ptl}]
There exist $K+1$ affinely independent bad pins $x_0,\ldots,x_K\in E_{\theta}(A)$ such that
\begin{equation}\label{pctd}
    \left|\Delta^2(A,x_i)\cap[0,R]\right|=o(R^{\theta})\text{ for all }0\leq i\leq K
\end{equation}
For $y\in A_N$, we define
$$
\Phi_K(y)=\left(\|y-x_0\|^2,\ldots,\|y-x_K\|^2\right)
$$
Then
$$
A_N=\bigsqcup_{\lambda\in\Phi_K(A_N)}\Phi_K^{-1}(\lambda)
$$
Hence, we know
$$
|A_N|\leq|\Phi_K(A_N)|\cdot\max_{\lambda\in\Phi_K(A_N)}\left|\Phi_K^{-1}(\lambda)\right|
$$
First, we show an upper bound for $|\Phi_K(A_N)|$: for every $0\leq i\leq K$ and every $N\geq2\|x_i\|$, we have
$$
\|y-x_i\|\leq\|y\|+\|x_i\|\leq\sqrt{d}N+\frac{N}{2}\lesssim_d N
$$
Therefore, by \eqref{pctd}, each component of $\Phi_K(y)$ has only $o(N^{2\theta})$ possible values. In other words, we obtain
\begin{equation}\label{pctd2}
    |\Phi_K(A_N)|=o(N^{2\theta(K+1)})
\end{equation}
Second, we prove an upper bound for $\max_{\lambda\in\Phi_K(A_N)}\left|\Phi_K^{-1}(\lambda)\right|$: fix one distance tuple $(\lambda_0,\ldots,\lambda_K)\in\Phi_K(A_N)$. We count integer points $y$ satisfying
$$
\|y-x_i\|^2=\lambda_i\text{ for all }0\leq i\leq K
$$
Subtracting the first equation from the $i$-th gives
$$
2y\cdot(x_i-x_0)=\|x_i\|^2-\|x_0\|^2-\lambda_i+\lambda_0
$$
which is linear in $y$. Since $x_0,\ldots,x_K$ are aﬀinely independent, the vectors $x_1-x_0,\ldots,x_K-x_0$ are linearly independent. Thus, the $K$ equations impose $K$ independent linear constraints inside $\mathrm{aff}(A)$. Consequently, the solutions of these linear equations lie in an aﬀine subspace $W_{\lambda}$ of dimension
$$
\dim_{\R}(W_{\lambda})=\dim_{\R}\left(\mathrm{aff}(A)\right)-K\geq2
$$
Furthermore, we get
$$
\Phi_K^{-1}(\lambda)=W_{\lambda}\cap\left\{y\in A_N:\|y-x_0\|^2=\lambda_0\right\}
$$
which is a $\dim_{\R}(W_{\lambda})$-dimensional lattice sphere, so we know
\begin{equation}\label{pctd3}
\left|\Phi_K^{-1}(\lambda)\right|\lesssim_{\epsilon}N^{\dim_{\R}(W_{\lambda})-2+\epsilon}
\end{equation}
Finally, by \eqref{pctd2} and \eqref{pctd3}, we have
$$
|A_N|=o\left(N^{2\theta(K+1)+\dim_{\R}\left(\mathrm{aff}(A)\right)-K-2+\epsilon}\right)
$$
which implies
$$
\dim_{\mathcal{UM}}^{\text{dis}}(A)\leq\dim_{\R}\left(\mathrm{aff}(A)\right)-2+2\theta+K(2\theta-1)
$$
\end{proof}
\begin{remark}\label{ptrk}
If $\dim_{\R}\left(\mathrm{aff}(A)\right)=1$, then one can easily show that
$$
E_{\theta}(A)\neq\varnothing\Longrightarrow
\dim_{\mathcal{UM}}^{\text{dis}}(A)\leq2\theta
$$
by the following inequality:
$$
|A_N|\leq2\left|\Delta^2(A,x)\cap [0,dN^2]\right|
$$
since $A$ lies on a line so that every squared distance from $x$ corresponds to at most two points of $A$.
\end{remark}
Theorem \ref{pinmainthm} follows immediately from Lemma \ref{ptl} and Remark \ref{ptrk}.
}
\section{Main Examples}\label{E}
Here are several examples for our discrete $\Z^d$-variant of the Erd\H{o}s-Falconer unpinned and pinned distance problems:
\begin{eg}\label{eg1}
This is the trivial example.

Let $d\in\N$ and $A=\Z^d$. Clearly, one can see
$$
\dim_{\mathcal{H}}^{\text{dis}}(A)=\dim_{\mathcal{UM}}^{\text{dis}}(A)=\dim_{\mathcal{UB}}^{\text{dis}}(A)=d
$$
and
$$
\Delta^2(A)=\Delta^2(A,x)\text{ for all }x\in A
$$
By Landau's and Legendre's sum of squares theorems, one has
$$
\mathcal{N}_{\theta}^{*}\big(\Delta^2(A)\big)=\mathcal{N}_{\theta}^{*}\big(\Delta^2(A,x)\big)>0
\left\{
\begin{array}{ll}
    \text{for all }\theta\in\left[0,\frac{1}{2}\right] &\text{if }d=1  \vspace{0.1cm}\\
     \text{for all }\theta\in[0,1) &\text{if }d=2 \vspace{0.1cm}\\
     \text{for all }\theta\in[0,1] &\text{if }d\geq3
\end{array}
\right.
$$
or equivalently,
$$
\dim_{\mathcal{UM}}^{\text{dis}}\big(\Delta^2(A)\big)=\dim_{\mathcal{UM}}^{\text{dis}}\big(\Delta^2(A,x)\big)=
\left\{
\begin{array}{ll}
   \frac{1}{2}  &\text{if }d=1  \vspace{0.1cm}\\
   1  &\text{if }d\geq2
\end{array}
\right.
$$
\end{eg}
\begin{eg}\label{sharp}
This construction is inspired by \cite{Erdos01051946,Falconer_1985} and is regarded as a sharp example of our Conjecture \ref{conj} and Conjecture \ref{conjpin} for $d=1,2$. Moreover, this construction also attains the elementary pinned upper bound in Remark \ref{upperboundpin} for every pin when $d=1,2$, and when $d=3$ with $0<\theta\leq\frac{1}{2}$.

For every $\theta\in(0,1)$, we define
$$
s_{n}=2^{n!}\quad\quad\quad\ell_{n}=\left\lfloor\frac{1}{s_n}\prod_{j=1}^ns_j^{\frac{1}{\theta}}\right\rfloor
$$
Let
$$
A_1=\left\{\ell_1(m_1,\ldots,m_d)\in\Z^d:0\leq m_1,\ldots,m_d\leq s_1-1\right\}
$$
be a lattice containing $s_1^d$ points spaced at $\ell_1$ intervals. Then we recursively define $A_{n}$ to be a self-similar lattice containing $s_{n}^d$ copies of $A_{n-1}$ spaced at $\ell_{n}$ intervals. In other words, we define
$$
A_n=A_{n-1}+(\ell_n+L_{n-1})\{0,1,\ldots,s_n-1\}^{d}
$$
where
$$
L_1=\ell_1(s_1-1)\quad\quad\quad L_{n}=s_nL_{n-1}+\ell_{n}(s_n-1)
$$
be the length of each side of $A_n$.
\begin{center}
\begin{tikzpicture}[scale=0.85]


\begin{scope}

\foreach \x in {0,...,3}{
    \foreach \y in {0,...,3}{
        \node at (\x,\y) {\scalebox{0.7}{$\bullet$}};
    }
}

\pgfmathsetmacro{\Ltop}{3}

\draw[red,thick] (0,\Ltop) -- (3,\Ltop);
\node[red,above] at (1.5,\Ltop) {$L_1$};

\draw[blue,thick]
(0,1.5) ellipse (0.32 and 1.85);

\node[blue,left] at (-0.35,1.5) {$s_1$};

\draw[violet,thick] (3,0) -- (3,1);
\node[violet,right] at (3,0.5) {$\ell_1$};

\node[align=center,font=\small] at (1.5,-2.2)
{The lattice $A_1$ for $d=2$};

\end{scope}


\begin{scope}[xshift=8.2cm]

\def\Macro{1.55}
\def\Micro{0.23}

\foreach \i in {0,...,4}{
    \foreach \j in {0,...,4}{
        \foreach \a in {0,...,3}{
            \foreach \b in {0,...,3}{
                \node at
                ({\i*\Macro+(\a-1.5)*\Micro},
                 {\j*\Macro+(\b-1.5)*\Micro})
                {\scalebox{0.7}{$\bullet$}};
            }
        }
    }
}

\pgfmathsetmacro{\Rleft}{-1.5*\Micro}
\pgfmathsetmacro{\Rright}{4*\Macro + 1.5*\Micro}
\pgfmathsetmacro{\Rtop}{4*\Macro + 1.5*\Micro}

\pgfmathsetmacro{\LowerTop}{1.5*\Micro}
\pgfmathsetmacro{\UpperBottom}{\Macro - 1.5*\Micro}

\draw[red,thick] (\Rleft,\Rtop) -- (\Rright,\Rtop);
\node[red,above] at ({(\Rleft+\Rright)/2},\Rtop) {$L_2$};

\draw[blue,thick]
(0,{2*\Macro}) ellipse (0.72 and {2.95*\Macro});

\node[blue,left] at (-0.80,{2*\Macro}) {$s_2$};

\draw[violet,thick]
(\Rright,\LowerTop) -- (\Rright,\UpperBottom);
\node[violet,right] at (\Rright,{(\LowerTop+\UpperBottom)/2}) {$\ell_2$};

\node[align=center,font=\small]
at ({(\Rleft+\Rright)/2},-2.2)
{The self-similar lattice $A_2$ for $d=2$};

\end{scope}

\end{tikzpicture}
\end{center}
Define
$$
A=\bigcup_{n\in\N}A_n
$$
Since we know
$$
|A_n|=s_n^{d+o(1)}\quad\quad\quad L_n=s_n^{\frac{1}{\theta}+o(1)}
$$
it follows that
$$
\dim_{\mathcal{H}}^{\text{dis}}(A)\leq\dim_{\mathcal{UM}}^{\text{dis}}(A)\leq\dim_{\mathcal{UB}}^{\text{dis}}(A)\leq d\theta
$$
We leave the proof of the lower bound $\dim_{\mathcal{H}}^{\text{dis}}(A)\geq d\theta$ in Section \ref{A} to show the equality.

Fix any sufficiently large $N\in\N$, there exists $n\in\N$ such that
$$
\ell_{n}^2\leq N<\ell_{n+1}^2
$$
Then
$$
\left|\Delta^2(A)\cap[0,N]\right|=\left|\Delta^2(A_n)\cap[0,N]\right|
$$
Every difference vector in $A_{n}-A_n$ has the form $D_nu+v$, where
$$
D_n=\ell_n+L_{n-1}\quad\quad\quad u\in\{-(s_n-1),\ldots,(s_n-1)\}^d\quad\quad\quad v\in A_{n-1}-A_{n-1}
$$
If $\|D_nu+v\|^2\leq N$, then
$$
|u_i|\leq\frac{\sqrt{N}+L_{n-1}}{D_n}\lessapprox\frac{\sqrt{N}}{D_n}
$$
Hence, the number of admissible displacement vectors $u$ is at most $\lessapprox q^d$, where
$$
q=\min\left\{s_n,\frac{\sqrt{N}}{D_n}\right\}
$$
That is, we have
\begin{equation}\label{dgl}
    \left|\Delta^2(A_n)\cap[0,N]\right|\lessapprox |A_{n-1}-A_{n-1}|\cdot q^d\leq |A_{n-1}|^2\cdot(s_n^d)^{1-\theta}\left(\frac{\sqrt{N}}{D_n}\right)^{d\theta}\lessapprox s_n^{o(1)}N^{\frac{d\theta}{2}}=N^{\frac{d\theta}{2}+o(1)}
\end{equation}
which implies 
$$
\mathcal{N}_{\theta'}^{*}\big(\Delta^2(A)\big)=0 \text{ for all }\theta'>\frac{d\theta}{2}
$$
In other words, we have
\begin{equation}\label{d2l}
\dim_{\mathcal{UM}}^{\text{dis}}\big(\Delta^2(A)\big)\leq\frac{d\theta}{2}
\end{equation}
Fix $x\in A$. Then there exists $n_x\in\N$ such that $x\in A_{n_x}$. For every $n>n_{x}$, we know
$$
x+D_n\{0,\ldots,s_n-1\}^d\subseteq A_n
$$
which implies
$$
D_n^2\{u_1^2+\cdots+u_d^2:0\leq u_i\leq s_n-1\}\subseteq\Delta^2(A_n,x)
$$
Since we have
$$
\left|\{u_1^2+\cdots+u_d^2:0\leq u_i\leq s_n-1\}\right|=\left\{
\begin{array}{ll}
    s_n &\text{if }d=1  \\
    s_n^{2-o(1)} &\text{if }d=2 
\end{array}
\right.
$$
for every $\theta'<\theta$, we obtain
$$
\mathcal{N}_{\theta'}^{*}\big(\Delta^2(A,x)\big)\geq\limsup_{n\to\infty}\frac{\left|\Delta^2(A_n,x)\right|}{(dL_n^2)^{\theta'}}\gtrsim_x\left\{
\begin{array}{ll}
    \displaystyle\limsup_{n\to\infty}\frac{s_n}{s_n^{2\cdot\frac{\theta'}{\theta}+o(1)}}=\infty &\text{for all }\theta'<\displaystyle\frac{\theta}{2}\text{ if }d=1  \\
    \displaystyle\limsup_{n\to\infty}\frac{s_n^{2-o(1)}}{s_n^{2\cdot\frac{\theta'}{\theta}+o(1)}}=\infty &\text{for all }\theta'<\theta\text{ if }d=2
\end{array}
\right.
$$
which implies
\begin{equation}\label{pinlowerboundeq}
    \dim_{\mathcal{UM}}^{\text{dis}}\big(\Delta^2(A,x)\big)\geq\left\{
\begin{array}{ll}
    \frac{\theta}{2} &\text{if }d=1  \\
    \theta &\text{if }d=2 
\end{array}
\right.
\end{equation}
Therefore, by \eqref{d2l} and \eqref{pinlowerboundeq}, we get
$$
\dim_{\mathcal{UM}}^{\text{dis}}\big(\Delta^2(A)\big)=\dim_{\mathcal{UM}}^{\text{dis}}\big(\Delta^2(A,x)\big)=\left\{
\begin{array}{ll}
    \frac{\theta}{2} &\text{if }d=1  \\
    \theta &\text{if }d=2 
\end{array}
\right.
$$
for all $x\in A$.
\end{eg}
\begin{remark}
In higher dimensions \(d\ge3\), the unpinned and pinned distance sets have the same upper discrete mass dimension, given by a different formula from those in low dimensions: for every \(x\in A\), one has
$$
\dim_{\mathcal{UM}}^{\text{dis}}\big(\Delta^2(A)\big)=\dim_{\mathcal{UM}}^{\text{dis}}\big(\Delta^2(A,x)\big)=
\left\{
\begin{array}{ll}
     \frac{3\theta}{2}&\text{ when }0<\theta\leq\frac{1}{2} \\
     \frac{1+\theta}{2}&\text{ when }\frac{1}{2}\leq\theta\leq1
\end{array}
\right.
$$
First, we show the upper bound for $\dim_{\mathcal{UM}}^{\text{dis}}\big(\Delta^2(A)\big)$: our goal is to improve the estimate in \eqref{dgl}. Note that
$$
\|D_nu+v\|^2=\|v\|^2+2D_n\langle u,v\rangle+D_n^2\|u\|^2
$$
Since we know
$$
0\leq\|u\|^2\lesssim_d q^2\quad\quad\quad
\left|\langle u,v\rangle\right|\leq\sum_{j=1}^d|u_j||v_j|\lesssim_d qL_{n-1}
$$
it follows that
\begin{align*}
    \left|\Delta^2(A_{n})\cap[0,N]\right|&\lesssim_d\left|A_{n-1}-A_{n-1}\right|\cdot q^2\cdot qL_{n-1}\\
    &\lessapprox s_n^{o(1)}q^3\\
    &=s_n^{o(1)}(D_nq)^{3\theta}\left(\frac{q^{1-\theta}}{D_n^{\theta}}\right)^{3}\\
    &\leq s_n^{o(1)}N^{\frac{3\theta}{2}}\left(\frac{s_n^{1-\theta}}{D_n^{\theta}}\right)^{3}\\
    &\lessapprox s_n^{o(1)}N^{\frac{3\theta}{2}}=N^{\frac{3\theta}{2}+o(1)}
\end{align*}
Rewrite $\|D_nu+v\|^2$ as
$$
\|D_nu+v\|^2=\|v\|^2+D_nz\text{ where }z=2\langle u,v\rangle+D_n\|u\|^2
$$
Since we have
$$
|z|\lesssim_d qL_{n-1}+D_nq^2\lesssim D_nq^2
$$
it follows that
\begin{align*}
    \left|\Delta^2(A_{n})\cap[0,N]\right|&\lesssim_d\left|A_{n-1}-A_{n-1}\right|\cdot D_nq^2\\
    &\lessapprox s_{n}^{o(1)}D_nq^2\\
    &=s_{n}^{o(1)}(D_nq)^{1+\theta}\frac{q^{1-\theta}}{D_n^{\theta}}\\
    &\leq s_{n}^{o(1)} N^{\frac{1+\theta}{2}}\frac{s_n^{1-\theta}}{D_n^{\theta}}\\
    &\lessapprox s_{n}^{o(1)} N^{\frac{1+\theta}{2}}=N^{\frac{1+\theta}{2}+o(1)}
\end{align*}
Therefore, we obtain
\begin{align*}
    \left|\Delta^2(A_{n})\cap[0,N]\right|\lessapprox N^{\min\left\{\frac{3\theta}{2},\frac{1+\theta}{2}\right\}+o(1)}
\end{align*}
which implies
$$
\dim_{\mathcal{UM}}^{\text{dis}}\big(\Delta^2(A)\big)\leq
\left\{
\begin{array}{ll}
     \frac{3\theta}{2}&\text{ when }0<\theta\leq\frac{1}{2} \\
     \frac{1+\theta}{2}&\text{ when }\frac{1}{2}\leq\theta\leq1
\end{array}
\right.
$$
Finally, we prove the lower bound for $\dim_{\mathcal{UM}}^{\text{dis}}\big(\Delta^2(A,x)\big)$: fix $x\in A$. Then there exists $n_x\in\N$ such that $x\in A_{n_x}$. Since $A_{n_x}$ is a Cartesian product, we can choose $a\in A_{n_x}$ such that
$$
a-x=(v_1,0,\ldots,0)\in A_{n_x}-A_{n_x}\quad\quad\quad v_1\neq0
$$
For every $n>n_{x}$, we know
$$
a+D_n\{0,\ldots,s_n-1\}^d\subseteq A_n
$$
Let
$$
J=\{0,1,\ldots,s_n-1\}\quad\quad\quad W=\left\{u_2^2+\cdots+u_d^2:0\leq u_i\leq s_n-1\right\}
$$
For every $(j,w)\in J\times W$, we define
$$
F(j,w)=D_n^2(j^2+w)+2D_nv_1j+v_1^2\quad\quad\quad \mu_{F}(\lambda)=\left|\left\{(j,w)\in J\times W:F(j,w)=\lambda\right\}\right|
$$
Then
$$
\left|\Delta^2(A_{n},x)\right|\geq|F(J,W)|\geq\frac{|J||W|}{\|\mu_{F}\|_{\infty}}
$$
Now we show an upper bound for $\|\mu_{F}\|_{\infty}$: suppose that
$$
F(j_1,w_1)=F(j_2,w_2)
$$
Then
$$
D_n(j_1^2+w_1-j_2^2-w_2)=-2v_1(j_1-j_2)
$$
which implies
$$
h\mid(j_1-j_2)\text{ where }h=\frac{D_n}{\gcd(D_n,2v_1)}
$$
If $\lambda$ and $j$ are fixed, then $w$ is uniquely determined. In other words, we know
$$
\|\mu_{F}\|_{\infty}\leq\left|\left\{j_1\in J:j_1\equiv j_2\pmod{h}\right\}\right|\leq 1+\frac{s_n}{h}\leq1+\frac{2|v_1|s_n}{D_n}\lesssim_{|v_1|}1+\frac{s_n}{D_n}
$$
Since $|J|=s_n$ and $|W|\geq s_n^{2-o(1)}$, it follows that
$$
\left|\Delta^2(A_{n},x)\right|\gtrsim_{|v_1|}\frac{s_n^{3-o(1)}D_n}{s_n+D_n}\geq\frac{1}{2}s_n^{2-o(1)}\min\{s_n,D_n\}\gtrapprox\min\left\{s_n^3,s_n^{1+\frac{1}{\theta}}\right\}
$$
which implies
$$
\dim_{\mathcal{UM}}^{\text{dis}}\big(\Delta^2(A,x)\big)\geq
\left\{
\begin{array}{ll}
     \frac{3\theta}{2}&\text{ when }0<\theta\leq\frac{1}{2} \\
     \frac{1+\theta}{2}&\text{ when }\frac{1}{2}\leq\theta\leq1
\end{array}
\right.
$$
\end{remark}
\begin{remark}
For $\theta=1$, one can easily show that the construction in Example \ref{sharp} gives us the same conclusion with
$$
\dim_{\mathcal{H}}^{\text{dis}}(A)=\dim_{\mathcal{UM}}^{\text{dis}}(A)=\dim_{\mathcal{UB}}^{\text{dis}}(A)=d
$$
and
$$
\dim_{\mathcal{UM}}^{\text{dis}}\big(\Delta^2(A)\big)=\dim_{\mathcal{UM}}^{\text{dis}}\big(\Delta^2(A,x)\big)=
\left\{
\begin{array}{ll}
     \frac{1}{2}&\text{if }d=1 \\
     1&\text{if }d\geq2
\end{array}
\right.
$$
for all $x\in A$.
\end{remark}
\begin{eg}\label{peg}
This construction is inspired by \cite{bishop2017fractals} and illustrates the distinction between upper discrete mass dimension and upper discrete Banach dimension, as well as how this distinction affects the study of the unpinned and pinned distance problems. Moreover, it shows that discrete Hausdorff dimension alone cannot provide a nontrivial upper bound for the upper discrete mass dimension of the unpinned distance set.

Let
$$
B_n=Q_{r_n}+\left(R_n,0,\ldots,0\right)\quad\quad\quad A=\bigcup_{n\in\N}B_n
$$
where
$$
r_n=2^n\quad\quad\quad R_n=2^{2^n}
$$
Clearly, we know
$$
\dim_{\mathcal{H}}^{\text{dis}}(A)=\dim_{\mathcal{UM}}^{\text{dis}}(A)=0\quad\quad\quad\dim_{\mathcal{UB}}^{\text{dis}}(A)=d
$$
and
$$
\Delta^2(A)=\Delta^2(\Z^d)
$$
Hence, by Example \ref{eg1}, we have
$$
\mathcal{N}_{\theta}^{*}\big(\Delta^2(A)\big)>0
\left\{
\begin{array}{ll}
    \text{for all }\theta\in\left[0,\frac{1}{2}\right] &\text{if }d=1  \vspace{0.1cm}\\
     \text{for all }\theta\in[0,1) &\text{if }d=2 \vspace{0.1cm}\\
     \text{for all }\theta\in[0,1] &\text{if }d\geq3
\end{array}
\right.
$$
or equivalently,
$$
\dim_{\mathcal{UM}}^{\text{dis}}\big(\Delta^2(A)\big)=
\left\{
\begin{array}{ll}
   \frac{1}{2}  &\text{if }d=1  \vspace{0.1cm}\\
   1  &\text{if }d\geq2
\end{array}
\right.
$$
Fix $x\in A$. If $y\in B_n$ satisfies $\|x-y\|^2\leq N$, then
\begin{equation}
    \sqrt{N}\geq\|x-y\|\geq|x_1-y_1|\geq R_n-\frac{r_n}{2}-|x_1|\geq\frac{R_n}{2}-|x_1|
\end{equation}
which implies 
$$
n\leq M\text{ where }M=\left\lfloor\log_2\left(\log_2\left(2\left(\sqrt{N}+|x_1|\right)\right)\right)\right\rfloor
$$
Therefore, we obtain
\begin{align*}
    \left|\Delta^2(A,x)\cap[0,N]\right|&\leq\left|\left\{y\in A:\|x-y\|^2\leq N\right\}\right|\\
    &\leq\sum_{\frac{R_n}{2}\leq\sqrt{N}+|x_1|}|B_n|\\
    &\lesssim_d\sum_{\frac{R_n}{2}\leq\sqrt{N}+|x_1|}r_n^d\\
    &=\sum_{n=1}^{M}2^{nd}\\
    &\lesssim_d2^{Md}\\
    &\leq\left[\log_2\left(2\left(\sqrt{N}+|x_1|\right)\right)\right]^d\lesssim_x\left(1+\log_2N\right)^d
\end{align*}
which implies for every $\theta>0$, we get
$$
\mathcal{N}_{\theta}^{*}\big(\Delta^2(A,x)\big)=\limsup_{N\to\infty}\frac{\left|\Delta^2(A,x)\cap[0,N]\right|}{N^{\theta}}\leq\limsup_{N\to\infty}\frac{\left(1+\log_2N\right)^d}{N^{\theta}}=0
$$
In other words, we know
$$
\dim_{\mathcal{UM}}^{\text{dis}}\big(\Delta^2(A,x)\big)=0\text{ for all }x\in A
$$
\end{eg}
\begin{remark}
Example \ref{peg} shows that the upper discrete mass dimension cannot, in general, be replaced by the upper discrete Banach dimension in the pinned distance problem.
\end{remark}
\begin{eg}\label{manypin}
This construction is a modification of Example \ref{peg} and illustrates the distinction between discrete Hausdorff dimension and upper discrete mass dimension. Moreover, it shows that discrete Hausdorff dimension alone cannot provide a nontrivial upper bound for the upper discrete mass dimension of pinned distance sets, even when one is allowed to choose the pin.

Let $d\geq2$. Define
$$
R_n=2^{2^n}\quad\quad\quad t_n=\frac{R_n}{2^n}
$$
and
$$
B_n=\left\{(R_n+i,j,0,\ldots,0)\in\Z^d:0\leq i,j\leq t_n-1\right\}\quad\quad\quad A=\bigcup_{n\in\N}B_n
$$
We claim that
$$
\dim_{\mathcal{H}}^{\text{dis}}(A)=0\quad\quad\quad\dim_{\mathcal{UM}}^{\text{dis}}(A)=\dim_{\mathcal{UB}}^{\text{dis}}(A)=2
$$
and
$$
\dim_{\mathcal{UM}}^{\text{dis}}\big(\Delta^2(A,x)\big)=1\text{ for all }x\in A
$$
Let
$$
Q_n'=\left([R_n,R_n+t_n]\times[0,t_n]^{d-1}\right)\cap\Z^d
$$
Then
$$
B_n\subseteq Q_n'\quad\quad\quad\text{diam}(Q_n')\lesssim_d t_n\quad\quad\quad\max_{x\in Q_n'}\|x\|\geq R_n
$$
which implies for every $\alpha>0$, we know
$$
\mathscr{H}_{\alpha}(A)\leq\sum_{n=1}^{\infty}\left(\frac{\text{diam}(Q_n')}{\displaystyle\max_{x\in Q_n'}\|x\|}\right)^{\alpha}\lesssim_{d,\alpha}\sum_{n=1}^{\infty}\left(\frac{t_n}{R_n}\right)^{\alpha}<\infty
$$
Hence, we have $\dim_{\mathcal{H}}^{\text{dis}}(A)=0$. Recall that
$$
Q_{4R_n}=[-2R_n,2R_n]^d\cap\Z^d
$$
Then
$$
B_n\subseteq Q_{4R_n}\quad\quad\quad|A\cap Q_{4R_n}|\geq|B_n|=\frac{R_n^2}{2^{2n}}
$$
which implies for every $0\leq\theta<\frac{2}{d}$, we obtain
$$
\mathcal{N}_{\theta}^{*}(A)\geq\limsup_{n\to\infty}\frac{|A\cap Q_{4R_n}|}{|Q_{4R_n}|^{\theta}}=\infty
$$
Hence, we get
$$
2\leq\dim_{\mathcal{UM}}^{\text{dis}}(A)\leq\dim_{\mathcal{UB}}^{\text{dis}}(A)\leq\dim_{\mathcal{UB}}^{\text{dis}}\left(\Z^2\right)=2
$$
Fix $x\in A$. For every $y\in B_n$, we know
$$
\|x-y\|\leq\|x\|+\|y\|\lesssim_{x}R_n
$$
which implies there exists $C_x>0$ such that
$$
\Delta^2(B_n,x)\subseteq[0,C_xR_n^2]
$$
Since each squared distance determined by $x$ and points in $B_n$ occurs at most $\lesssim_{x,\epsilon}R_n^{2\epsilon}$ times, it follows that
$$
\left|\Delta^2(A,x)\cap[0,C_xR_n^2]\right|\geq\left|\Delta^2(B_n,x)\right|\gtrsim_{x,\epsilon}\frac{|B_n|}{R_n^{2\epsilon}}=\frac{R_n^{2-2\epsilon}}{2^{2n}}
$$
For every $0\leq\theta<1$ and every $0<\epsilon<1-\theta$, we have
$$
\mathcal{N}_{\theta}^{*}\big(\Delta^2(A,x)\big)\geq\limsup_{n\to\infty}\frac{\left|\Delta^2(A,x)\cap[0,C_xR_n^2]\right|}{(C_xR_n^2)^{\theta}}>0
$$
which implies
$$
1\leq\dim_{\mathcal{UM}}^{\text{dis}}\big(\Delta^2(A,x)\big)\leq\dim_{\mathcal{UM}}^{\text{dis}}\big(\Delta^2(A)\big)\leq\dim_{\mathcal{UM}}^{\text{dis}}\left(\Z\right)=1
$$
\end{eg}
\begin{remark}
For $d=1$, the analogous construction using discrete intervals
$$B_n=\{R_n,\ldots,R_n+t_n-1\}$$
gives
$$
\dim_{\mathcal{H}}^{\text{dis}}(A)=0\quad\quad\quad\dim_{\mathcal{UM}}^{\text{dis}}(A)=\dim_{\mathcal{UB}}^{\text{dis}}(A)=1
$$
and
$$
\dim_{\mathcal{UM}}^{\text{dis}}\big(\Delta^2(A)\big)=\dim_{\mathcal{UM}}^{\text{dis}}\big(\Delta^2(A,x)\big)=\frac{1}{2}\text{ for all }x\in A
$$
\end{remark}
\begin{eg}\label{egggg5555}
This construction is also a modification of Example \ref{peg} and shows that no nontrivial upper bound for the upper discrete mass dimension of an unpinned distance set can depend solely on the discrete Hausdorff dimension, upper discrete mass dimension, or upper discrete Banach dimension of the underlying set, even when all three dimensions vanish simultaneously.

Let $d\geq 2$. Enumerate $\mathbb Z_{\geq 0}^2=\{(a_n,b_n):n\in\mathbb N\}$ by setting
\[
(a_n,b_n)=(j,k-j)\quad\quad\quad n=\frac{k(k+1)}{2}+j+1
\]
for every $k\geq 0$ and $0\leq j\leq k$.
In particular, $\max\{a_n,b_n\}\leq n$. Let $R_n=2^{2^n}$ and define
\[
A=\bigcup_{n\in\N}
\left\{
(R_n,0,\ldots,0),
(R_n+a_n,b_n,0,\ldots,0)
\right\}
\]
Fix $t\in\Z^d$ and $N>4$. The projection of the translated cube $t+Q_N$ onto the first coordinate axis is
$$
I=\left[t_1-\frac{N}{2},t_1+\frac{N}{2}\right]
$$
Choose $M\in\N$ such that $R_M<N\leq R_{M+1}$. Since $a_n\leq n\leq R_n$ and $R_{n+1}=R_n^2$, the gaps between
consecutive intervals $I_n=[R_n,R_n+a_n]$ are
$$
R_{n+1}-(R_n+a_n)\geq R_n^2-2R_n\geq2R_n\geq2N\text{ for all }n\geq M+1
$$
Thus, we know
$$
\left|\left\{n\in\N:I\cap I_n\neq\varnothing\right\}\right|\leq M+1\lesssim\log_2{\log_2{N}}
$$
Since each interval $I_n$ corresponds to at most two points of $A$, it follows that
$$
|A\cap(t+Q_N)|\leq2\left|\left\{n\in\N:I\cap I_n\neq\varnothing\right\}\right|\lesssim\log_2{\log_2{N}}
$$
Consequently, we have
\[
\sup_{t\in\mathbb Z^d}|A\cap(t+Q_N)|\lesssim\log_2{\log_2{N}}
\]
which implies
\[
\dim_{\mathcal H}^{\mathrm{dis}}(A)
=\dim_{\mathcal{UM}}^{\mathrm{dis}}(A)
=\dim_{\mathcal{UB}}^{\mathrm{dis}}(A)=0
\]
On the other hand, by construction, we obtain
\[
\Delta^2\left(\Z^2\right)\subseteq\Delta^2(A)
\]
Hence, by Example \ref{eg1}, we get
\[
\dim_{\mathcal{UM}}^{\mathrm{dis}}(\Delta^2(A))=1
\]
Furthermore, by the elementary pinned upper bound in Remark \ref{upperboundpin}, we know
$$
\dim_{\mathcal{UM}}^{\text{dis}}\big(\Delta^2(A,x)\big)=0\text{ for all }x\in A
$$
\end{eg}
\begin{remark}
For $d=1$, the analogous construction $A=\displaystyle\bigcup_{n\in\N}\{R_{n},R_n+n\}$ gives
$$
\dim_{\mathcal{H}}^{\text{dis}}(A)=\dim_{\mathcal{UM}}^{\text{dis}}(A)=\dim_{\mathcal{UB}}^{\text{dis}}(A)=0
$$
and
$$
\dim_{\mathcal{UM}}^{\text{dis}}\big(\Delta^2(A)\big)=\frac{1}{2}\quad\quad\quad\dim_{\mathcal{UM}}^{\text{dis}}\big(\Delta^2(A,x)\big)=0\text{ for all }x\in A
$$
\end{remark}
\section{Discussion}\label{D}
In this section, we discuss the limitations of various theorems and techniques related to our discrete $\Z^d$-variant of the Erd\H{o}s-Falconer unpinned and pinned distance problems and explain why they fail to yield the desired results.
\subsection{$\ell^p$-Improving Inequalities for Discrete Spherical Averages}
The question regarding $\ell^p$-improving inequalities for discrete spherical averages, asked by Wright and Hughes \cite{AHL_2020__3__959_0}, is as follows:
\begin{qt}[Wright and Hughes]
For every $1<p<2$, what is the optimal exponent $\eta_p$ such that
$$
\|f\ast\sigma_{\lambda}\|_{\ell^{p',\infty}(\Z^d)}\lesssim_{d,p}\lambda^{-\eta_p}|S_{\lambda}|\|f\|_{\ell^p(\Z^d)}
$$
\end{qt}
The sharp result concerning this question was proved in \cite{han2026sharpellpimprovingestimatesfixedradius} and stated as Theorem \ref{lp}; however, our Theorem \ref{mainthm} is still far from our Conjecture \ref{conj}. In order to achieve our Conjecture \ref{conj}, Theorem \ref{lp} needs to hold for $p=\frac{d}{d-1}$ and all $d\geq3$, so it seems hopeless to prove our Conjecture \ref{conj} via $\ell^p$-improving inequalities for discrete spherical averages.

For $5\leq r<d$, although Han–Yang's $\ell^p$-improving inequalities provide an ambient-dimensional bound, this bound is weaker than the finite Erd\H{o}s distance bound and is therefore omitted from Theorem \ref{mainthm}. Moreover, neither Han–Yang's nor Hughes's $\ell^p$-improving inequalities directly yield the stronger intrinsic-dimensional threshold $T_{\ell^p}(r,\theta)$. The main obstacles are the following:
\begin{enumerate}[label=(\alph*)]
    \item By Remark \ref{affrk}, we need $\ell^p$-improving theorem for every fixed positive-definite integral quadratic form $Q$ with the same $r$-dimensional exponents; however, Han-Yang's $\ell^p$-improving theorem is only for the standard discrete spherical operator.
    \item Although Hughes's $\ell^p$-improving theorem is applicable to positive-definite integral quadratic forms, Hughes's $\ell^p$-improving theorem is restricted to regular values $\lambda\in\Gamma_{Q}$, rather than all $\lambda\in\N$. Moreover, Hughes's $p$-range is not as sharp as Han-Yang's.
\end{enumerate}
\subsection{Erd\H{o}s Distinct Distances Problem}
From the proof of the finite Erd\H{o}s distance bound, one can see that the Erd\H{o}s conjecture implies our Conjecture \ref{conj}. Here, we discuss two variants of the Erd\H{o}s distance problem:
\begin{enumerate}[label=(\alph*)]
    \item Solymosi and Vu \cite{solymosi2003distinct} proved a stronger result for homogeneous sets: For a homogeneous set $A\subseteq\R^d$ with $|A|=n$, one has $\left|\Delta^2(A)\right|\gtrsim n^{\frac{2}{d}-\frac{1}{d^2}}$. Nevertheless, the assumption of homogeneity is too strong to apply to our problem. The only applicable case is the trivial one, which has already been considered in Example \ref{eg1} via the sum of squares theorems without Solymosi and Vu's theorem for homogeneous sets.
    \item The finite field $\F_q^d$-variant of the Erd\H{o}s-Falconer conjecture proposed by Iosevich and Rudnev can imply our Conjecture \ref{conj} for $\theta=\frac{1}{2}$ when $d\geq2$ is even or $d,q\equiv3\pmod{4}$ by applying Conjecture \ref{finitefields} to $A_{N,t}$ and taking $q\to\infty$, so the progress towards Conjecture \ref{finitefields} is also relevant to our Conjecture \ref{conj} when $\theta\leq\frac{1}{2}$; nevertheless, the result via applying the current best bound of Conjecture \ref{finitefields} to our problem is weaker than the finite Erd\H{o}s distance bound, so we omit it.
\end{enumerate}
\subsection{Finite Pinned Distance Problem}
The current best result on the finite pinned distance problem for $\R^2$ was proved by Katz and Tardos \cite{MR2065258}:
\begin{thm}[Katz and Tardos]\label{pinned2}
Let $A\subseteq\R^2$ with $|A|=n$. Then there exists $x\in A$ such that
$$
\left|\Delta^2(A,x)\right|\gtrapprox n^{\alpha_2}
$$
where
$$
\alpha_2=\frac{48-14e}{55-16e}=0.864137\ldots
$$
\end{thm}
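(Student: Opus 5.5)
Since Theorem \ref{pinned2} is quoted from the literature and is used here only as a black box, the natural plan is to reconstruct the incidence-geometric argument of Katz and Tardos rather than anything specific to $\Z^d$. Let $t=\max_{x\in A}\left|\Delta^2(A,x)\right|$; the goal is $t\gtrapprox n^{\alpha_2}$.

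\textbf{Step 1: isosceles triples.} Count
$$
\mathcal{T}=\left\{(x,y,z)\in A^3:\ y\neq z,\ \|x-y\|=\|x-z\|\right\}.
$$
For each pin $x$, the $n-1$ points $y\neq x$ fall into at most $t$ distance classes. Cauchy--Schwarz then gives at least $\frac{(n-1)^2}{t}-(n-1)$ ordered pairs $(y,z)$ with $y\neq z$ in a common class. Summing over $x$,
$$
|\mathcal{T}|\gtrsim\frac{n^3}{t}
$$
once $t\leq n/2$; if $t>n/2$ there is nothing to prove. Geometrically, $(x,y,z)\in\mathcal{T}$ means that $x$ lies on the perpendicular bisector $\ell_{yz}$ of the segment $yz$. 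So $|\mathcal{T}|$ is an incidence count between the $n$ points of $A$ and the multiset of $\binom{n}{2}$ bisectors, each line weighted by the number of pairs $\{y,z\}$ it bisects.

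\textbf{Step 2: Sz\'ekely-type bound.} Dyadically decompose the bisector lines by their multiplicity $m$ (pairs bisected) and their richness $k$ (points of $A$ on them). A single line bisects at most $n/2$ disjoint pairs. Szemer\'edi--Trotter bounds the number of lines with at least $k$ points by $\lesssim n^2/k^3+n/k$. Combining these already gives the exponent $4/5$, which is Sz\'ekely's bound.

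\textbf{Step 3: the Katz--Tardos refinement (main obstacle).} To go beyond $4/5$, one must exploit that a line cannot simultaneously be very rich in points of $A$ and bisect very many pairs, for most lines. The key structural fact is this: if $\ell$ bisects pairs $\{y_i,z_i\}$, then the $z_i$ are the reflections of the $y_i$ across $\ell$. Hence heavily bisected lines force $A$ to have large intersection with its reflections $\rho_\ell(A)$. I would bound the number of ``bad'' configurations by the following route:
\begin{itemize}[label=\textbullet]
\item Group pairs $(y,z)$ according to the point $y$, and consider the lines through $y$ that are bisectors $\ell_{yz}$.
\item Apply Szemer\'edi--Trotter a second time, to points versus these families of bisectors. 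Equivalently, apply it to the reflected copies $\rho_\ell(A)$.
\item Set up a recursive, multi-scale inequality relating the number of $(m,k)$-lines at different scales.
\end{itemize}
The hardest part is Tardos's optimization of this recursion. Iterating the inequality infinitely often produces a continuous extremal problem. Its solution involves an exponential integral, which is where $e$ enters, and its value is $\alpha_2=\frac{48-14e}{55-16e}$. I expect the delicate points to be choosing the dyadic parameters consistently across iterations, and checking that logarithmic losses accumulate only to $n^{\epsilon}$. Those losses are what the $\gtrapprox$ records.

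\textbf{Step 4: conclusion.} Insert the upper bound for weighted incidences from Step 3 into the lower bound $|\mathcal{T}|\gtrsim n^3/t$ from Step 1. Solving the resulting inequality for $t$ gives $t\gtrapprox n^{\alpha_2}$. By definition of $t$, the maximizing pin $x\in A$ satisfies the statement.
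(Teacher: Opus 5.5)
The paper gives no proof of this statement. It quotes the result from Katz and Tardos and uses it only in the discussion section, so the citation is the paper's whole justification. Your reconstruction does not replace that citation, and one of its concrete claims is false.

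\textbf{Step 2 does not give $4/5$.} The two inputs you list are that a line bisects at most $n/2$ pairs, and that $\lesssim n^2/k^3+n/k$ lines contain at least $k$ points. From these alone you only get
$$
|\mathcal{T}|\lesssim\sum_{k\ \text{dyadic}}k\cdot\min\Bigl\{n^2,\ n\Bigl(\frac{n^2}{k^3}+\frac{n}{k}\Bigr)\Bigr\}\lesssim n^{7/3},
$$
with the balance at $k\approx n^{1/3}$. Nothing in these two facts rules out about $n$ lines, each containing $n^{1/3}$ points of $A$ and each bisecting about $n$ pairs. Combined with $|\mathcal{T}|\gtrsim n^3/t$, this yields only $t\gtrsim n^{2/3}$, not $n^{4/5}$.

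Sz\'ekely's exponent needs a further global input, the crossing lemma. It is applied to the multigraph of arcs joining consecutive points on the at most $nt$ circles centred at points of $A$, which have $\lesssim(nt)^2$ crossings. Szemer\'edi--Trotter enters only to discard edges of multiplicity $\gtrsim\sqrt{t}$. That step uses the fact that a circle centred on a line $\ell$ has at most two arcs whose endpoints are symmetric about $\ell$, not the $n/2$ bound. So the baseline you intend to refine is not established by your argument.

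\textbf{Step 3 is not an argument.} This is where the whole theorem lives, but it is a list of intended ingredients. No inequality between the $(m,k)$-classes is formulated, nothing is proved about it, and $\alpha_2$ is asserted rather than derived. There is no way to check that the scheme closes at all, let alone at this exponent.

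It also leaves out the actual source of the gain. In the Solymosi--T\'oth framework that Tardos and then Katz--Tardos refine, the decisive input is additive. For pairs $y,z$ on a common line $L$, the bisectors $\ell_{yz}$ are all perpendicular to $L$ and determined by $y+z$. This leads to a problem about distinct sums along a graph. Tardos's treatment of that problem is what produces $e$ and the exponent $4e/(5e-1)$. Katz--Tardos reach $(48-14e)/(55-16e)$ through a new entropy inequality for sums and differences, in the spirit of Katz--Tao.

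\textbf{Steps 1 and 4 ask for more than the theorem.} They commit you to the averaged bound $|\mathcal{T}|\lessapprox n^{3-\alpha_2}$ on isosceles triangles, which is stronger than the existence of one good pin. The isosceles-triangle bound that Pach and Tardos extracted from this machinery has exponent $(11e-3)/(5e-1)$. That exponent recovers only $4e/(5e-1)$.

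As it stands, the proposal does not prove the statement; you should cite it, as the paper does.
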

Note that Solymosi and Vu's recursion is also applicable to the finite pinned distance problem. Similarly, the current best result on the finite pinned distance problem for higher dimensions is applying Solymosi and Vu's recursion to Theorem \ref{pinned2} (by applying Solymosi and Vu's recursion (b) to Theorem \ref{pinned2} to get results for even dimensions $d\geq2$ and then applying Solymosi and Vu's recursion (a) to these results for even dimensions $d\geq2$ to get results for odd dimensions $d+1$):
\begin{thm}[The current best result on the finite pinned distance problem]
Let $d\geq3$ and $A\subseteq\R^d$ with $|A|=n$. Then there exists $x\in A$ such that
$$
\left|\Delta^2(A,x)\right|\gtrapprox n^{\alpha_d}
$$
where
$$
\alpha_d=
\left\{
\begin{array}{ll}
    \frac{2(d+1)}{d^2+2d-8+\frac{6}{\alpha_2}} &\text{if }d\text{ is even}  \\
    \frac{2\alpha_2d^2}{\alpha_2(d^3+d^2-9d+9)+6d-6} &\text{if }d\text{ is odd} 
\end{array}
\right.
$$
\end{thm}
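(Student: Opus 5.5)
The plan is to obtain the statement purely by bookkeeping: feed Theorem \ref{pinned2} into the pinned form of Solymosi and Vu's recursion (Theorem \ref{svrecur}), in the order indicated just before the statement. First the even dimensions are handled directly from $d_0=2$ using part (b). Then each odd dimension $d$ is obtained from the even dimension $d-1$ using part (a). Throughout I take as given, as the text preceding the statement asserts, that Theorem \ref{svrecur} holds with $\left|\Delta^2(\cdot)\right|$ replaced by $\max_{x}\left|\Delta^2(\cdot,x)\right|$. The justification is that Solymosi and Vu's argument produces its distances from a single point by slicing into lower-dimensional spheres and inducting, so it transfers to the pinned setting.

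\emph{Subpolynomial losses.} The hypothesis of Theorem \ref{svrecur} is stated with $\gtrsim$, while Theorem \ref{pinned2} only gives $\gtrapprox$. I handle this in the standard way. For each $\epsilon>0$, apply the recursion with the exponent $\alpha_2-\epsilon$, which does satisfy a $\gtrsim$ bound. The resulting exponent depends continuously on the input exponent, so letting $\epsilon\to0$ recovers a $\gtrapprox$ bound with the limiting exponent. The same device is used at the second step.

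\emph{Even $d\geq4$.} Apply Theorem \ref{svrecur}(b) with $d_0=2$; here $d-d_0$ is even. The exponent is
$$
\frac{2(d+1)}{(d+4)(d-2)+\frac{6}{\alpha_2}}=\frac{2(d+1)}{d^2+2d-8+\frac{6}{\alpha_2}},
$$
which is the claimed $\alpha_d$ for even $d$. At $d=2$ this formula returns $\frac{6}{6/\alpha_2}=\alpha_2$, so it is also valid for $d-1=2$ in the next step.

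\emph{Odd $d\geq3$.} Apply Theorem \ref{svrecur}(a) with $d_0=d-1$, which is even, and input exponent $\alpha_{d-1}$. Substituting $d-1$ into the even formula gives
$$
\alpha_{d-1}=\frac{2d}{d^2-9+\frac{6}{\alpha_2}}.
$$
Since $d-d_0=1$, part (a) gives the exponent $\frac{2d}{2d+\frac{2(d-1)}{\alpha_{d-1}}}$. Now $\frac{2(d-1)}{\alpha_{d-1}}=\frac{(d-1)\left(d^2-9+\frac{6}{\alpha_2}\right)}{d}$. Multiplying numerator and denominator by $d$, and using $2d^2+(d-1)(d^2-9)=d^3+d^2-9d+9$, the exponent becomes
$$
\frac{2d^2}{d^3+d^2-9d+9+\frac{6(d-1)}{\alpha_2}}.
$$
Multiplying numerator and denominator by $\alpha_2$ gives exactly $\frac{2\alpha_2 d^2}{\alpha_2(d^3+d^2-9d+9)+6d-6}$.

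\emph{Main obstacle.} The algebra above is routine. The only substantive point is the pinned version of the recursion, namely that the slicing argument really produces a single good pin in $A$ rather than many distances spread over different pins. I would check this first by rereading Solymosi and Vu's induction step with pinned distance counts throughout. The $\epsilon$-loss bookkeeping is the only other care needed.
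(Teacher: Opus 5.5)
Your proposal is correct and follows the paper's route exactly: apply the pinned Solymosi--Vu recursion (b) with $d_0=2$ to the Katz--Tardos exponent $\alpha_2$ for even $d$, then apply (a) with $d_0=d-1$ for odd $d$; your algebra reproduces both formulas for $\alpha_d$. Like the paper, you take the pinned form of the recursion as given, and your handling of the $\epsilon$-losses is a harmless bit of extra care that the paper leaves implicit.
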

\begin{remark}\label{fpdw}
The existing finite pinned distance results do not directly yield corresponding results in our discrete setting. Indeed, applying these estimates to \(A_N\) may produce different good pins at different scales, whereas our problem requires a single fixed pin \(x\in A\). A similar difficulty arises when using pinned distance results over finite fields.
\end{remark}
\begin{remark}
Our approach via $\ell^p$-improving inequalities can also derive a finite pinned distance result for $A_N$. Nevertheless, by Remark \ref{fpdw}, this result is not useful for our discrete $\Z^d$-variant setting.
\end{remark}
\subsection{A Weaker Notion of Homogeneity: $s$-Adaptability}
There is a weaker notion of homogeneity called $s$-adaptability in the literature \cite{iosevich2014theory,grafakos2015multilinear}, which makes a connection between the Erd\H{o}s conjecture and the Falconer conjecture and defines their version of discrete Hausdorff dimension of a family of 1-separated sets $A_N\subseteq\R^d$ with $|A_N|=N$ based on the notion of $s$-adaptability.

We recall some definitions and theorems from \cite{iosevich2014theory} here:
\begin{defn}
Let $d\geq2$. We say that $A\subseteq\R^d$ is \textbf{1-separated} if $\|a-a'\|\geq1$ for all distinct $a,a'\in A$.
\end{defn}
\begin{defn}
Let $A_N\subseteq\R^d$ be a family of 1-separated sets with $|A_N|=N$. Suppose that
$$
\left\{x\in\R^d:d\left(x,\frac{A_N}{\text{diam}(A_N)}\right)\leq\frac{1}{4\text{diam}(A_N)}\right\}\subseteq[0,1]^d
$$
We say that $A_N$ is $\bm{s}$\textbf{-adaptable} if for every $\beta<s$, one has
$$
\text{diam}(A_N)\lesssim N^{\frac{1}{s}}\text{ and }\frac{1}{N^2}\sum_{a\neq a'}\frac{1}{\|a-a'\|^{\beta}}\lesssim\frac{1}{\text{diam}(A_N)^{\beta}}
$$
\end{defn}
One can see that the definition of $s$-adaptability is based on the Riesz kernels in the continuous setting.
\begin{defn}[Iosevich, Rudnev, and Uriarte-Tuero]
\textbf{Their version of discrete Hausdorff dimension} of a family of 1-separated sets $A_N\subseteq\R^d$ with $|A_N|=N$ is defined by
\begin{align*}
\sup\{\beta>0:
    &\text{ for every }\epsilon>0,\text{ there exists a family of sets }B_N\subseteq A_N \\
    &\text{ such that }|B_N|\gtrsim_{\epsilon}\frac{|A_N|}{N^{\epsilon}}\text{ and }B_N\text{ is }\beta\text{-adaptable}\}
\end{align*}
\end{defn}
The definition of their version of discrete Hausdorff dimension is based on the notion of $s$-adaptability.
\begin{thm}[Iosevich, Rudnev, and Uriarte-Tuero]\label{their}
Suppose that the Falconer distance conjecture holds to the extent that if a Borel probability measure $\mu$ supported on $E\subseteq[0,1]^d$ satisfies that $I_{\alpha}(\mu)\leq C_0<\infty$, where
$$
I_{\alpha}(\mu)=\iint\frac{1}{\|x-y\|^{\alpha}}d\mu(x)d\mu(y)
$$
for some $\alpha>s_0\geq\frac{d}{2}$, then $\mathcal{L}_1\big(\Delta(E)\big)\geq C(\alpha,C_0)>0$.

Let $A_N\subseteq\R^d$ be a family of sets with $|A_N|=N$ and with their version of discrete Hausdorff dimension $>s_0$. Then
$$
\left|\Delta(A_N)\right|\gtrapprox N^{\frac{1}{s_0}}
$$
In particular, if the Falconer conjecture is true, then the Erd\H{o}s conjecture is true for any family of sets $A_N\subseteq\R^d$ with their version of discrete Hausdorff dimension $>\frac{d}{2}$.
\end{thm}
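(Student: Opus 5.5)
The plan is to discretize the finite configuration so that the assumed continuous Falconer-type statement applies. Given the family $A_N$ with their discrete Hausdorff dimension $>s_0$, fix $\alpha$ with $s_0<\alpha<\dim$, and pass to the subfamily $B_N\subseteq A_N$ with $|B_N|\gtrsim_\epsilon N^{1-\epsilon}$ that is $\alpha'$-adaptable for some $\alpha<\alpha'$. Then rescale: set $E_N=\bigcup_{b\in B_N}B\!\left(\frac{b}{\mathrm{diam}(B_N)},\frac{1}{4\mathrm{diam}(B_N)}\right)\subseteq[0,1]^d$. Let $\mu_N$ be the normalized Lebesgue measure on $E_N$, i.e.\ each ball carries mass $|B_N|^{-1}$ spread uniformly.

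The key step is to show $I_\alpha(\mu_N)\le C_0$ uniformly in $N$. I would split the energy into diagonal and off-diagonal parts.
\begin{itemize}
\item \emph{Diagonal part} (same ball). This is about $|B_N|^{-1}\,r^{-\alpha}$ with $r\approx\mathrm{diam}(B_N)^{-1}$, hence $\lesssim \mathrm{diam}(B_N)^{\alpha}/|B_N|$. Using $\mathrm{diam}(B_N)\lesssim |B_N|^{1/\alpha'}$ (up to $N^\epsilon$ losses) and $\alpha<\alpha'$, this is bounded.
\item \emph{Off-diagonal part.} Since the balls are $\tfrac12$-separated after rescaling, it is comparable to $\frac{1}{|B_N|^2}\sum_{b\ne b'}\mathrm{diam}(B_N)^{\alpha}\|b-b'\|^{-\alpha}$. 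This is $\lesssim 1$ precisely by the adaptability energy condition with $\beta=\alpha<\alpha'$.
\end{itemize}
The main obstacle is bookkeeping the $N^{\epsilon}$ losses from passing to $B_N$, and making sure the adaptability constants do not depend on $N$. If needed, I would absorb these losses by choosing $\alpha'$ strictly between $\alpha$ and the dimension.

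Applying the hypothesis now gives $\mathcal{L}_1(\Delta(E_N))\ge C(\alpha,C_0)>0$. Every distance in $\Delta(E_N)$ lies within $\frac{1}{2\mathrm{diam}(B_N)}$ of $\frac{1}{\mathrm{diam}(B_N)}\Delta(B_N)$. So $\Delta(E_N)$ is covered by $|\Delta(B_N)|$ intervals of length $\lesssim\mathrm{diam}(B_N)^{-1}$, and therefore
\[
|\Delta(A_N)|\ge|\Delta(B_N)|\gtrsim \mathrm{diam}(B_N).
\]
It remains to bound $\mathrm{diam}(B_N)$ from below. Since $B_N$ is $1$-separated, $\mathrm{diam}(B_N)\gtrsim|B_N|^{1/d}$; but we need the exponent $1/s_0$. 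For this I would use the energy condition again: the off-diagonal sum is at least the nearest-scale contribution $\gtrsim |B_N|^{-1}$ (pairs at distance $\approx1$), which forces $\mathrm{diam}(B_N)^{\beta}\lesssim |B_N|$. So I would instead derive the lower bound from counting. The $\beta$-energy bound implies that each ball of radius $\rho$ contains $\lesssim |B_N|(\rho/\mathrm{diam})^{\beta}$ points on average. Taking $\rho=1$ and using that each ball holds at least one point yields $\mathrm{diam}(B_N)\gtrsim |B_N|^{1/\beta}$. With $\beta$ arbitrarily close to $s_0$ from above, and noting $\beta<s$ is allowed, this gives
\[
\mathrm{diam}(B_N)\gtrapprox N^{1/\beta}\ge N^{1/s_0-\epsilon'}.
\]
Combined with the previous bound, this yields $|\Delta(A_N)|\gtrapprox N^{1/s_0}$. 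The final claim about the Erd\H{o}s conjecture follows by taking $s_0=\frac d2$.
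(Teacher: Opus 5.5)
The paper does not prove this theorem; it quotes it from Iosevich--Rudnev--Uriarte-Tuero. So I am judging your argument on its own merits.

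Everything up to $|\Delta(A_N)|\ge|\Delta(B_N)|\gtrsim \mathrm{diam}(B_N)$ is fine. With $D=\mathrm{diam}(B_N)$ and radius $\frac{1}{4D}$, your diagonal and off-diagonal estimates do give $I_\alpha(\mu_N)\lesssim 1$, and your covering argument is correct.

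The gap is the final step, the lower bound $D\gtrapprox N^{1/s_0}$, which is false in general.
\begin{itemize}
\item Adaptability gives only the \emph{upper} bound $D\lesssim|B_N|^{1/\beta}$. The only general lower bound is $D\gtrsim|B_N|^{1/d}$, which comes from $1$-separation.
\item A counterexample: $A_N=\{1,\ldots,M\}^d$ with $N=M^d$ is $\beta$-adaptable for every $\beta<d$. So its dimension is $d>\frac d2=s_0$, yet $D\approx N^{1/d}\ll N^{2/d}=N^{1/s_0}$.
\item Your counting argument reverses the inequality. A bound of the form (number of points within distance $\rho$ of a typical point) $\lesssim|B_N|(\rho/D)^\beta$, combined with a count $\ge1$ at $\rho=1$, gives $D^\beta\lesssim|B_N|$. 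That is the same upper bound you had just derived. Moreover, the energy only controls points $b'\neq b$, so that count may well be $0$.
\item You also cannot send $\beta\downarrow s_0$. $D$ is a fixed property of $B_N$, and lowering the exponent only weakens the upper bound $D\lesssim|B_N|^{1/\beta}$; it never produces $D\gtrsim|B_N|^{1/\beta}$.
\end{itemize}

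The missing idea is that the thickening radius should not be tied to the separation scale $\frac1D$. Here is how the argument goes once that is decoupled.
\begin{itemize}
\item Fix $s_0<\alpha<s<\alpha'$ and thicken $B_N/D$ by balls of radius $r=c|B_N|^{-1/s}$.
\item Because $s<\alpha'$ and $D\lesssim|B_N|^{1/\alpha'}$, a fixed small $c$ gives $r\le\frac{1}{4D}$. So the balls remain disjoint and your off-diagonal bound is unchanged.
\item The diagonal term becomes $|B_N|^{-1}r^{-\alpha}\approx|B_N|^{\alpha/s-1}\lesssim1$, since $\alpha<s$.
\item The hypothesis gives $\mathcal{L}_1(\Delta(E_N))\ge C(\alpha,C_0)$. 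Since $\Delta(E_N)$ is covered by $|\Delta(B_N)|$ intervals of length $\le4r$, you get $|\Delta(A_N)|\ge|\Delta(B_N)|\gtrsim r^{-1}\approx|B_N|^{1/s}\gtrsim_\epsilon N^{(1-\epsilon)/s}$, with no reference to $D$.
\end{itemize}
Letting $s\downarrow s_0$ and $\epsilon\downarrow0$ yields $|\Delta(A_N)|\gtrapprox N^{1/s_0}$. The Erd\H{o}s corollary then follows by taking $s_0=\frac d2$, as you say.
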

\begin{remark}
Frostman's theory and energy integrals play a crucial role in the Falconer distance problem. There is also a discrete analog of Frostman's theory and energy integrals; see for example \cite[Chapter~8]{bishop2017fractals}. However, we cannot emulate the proof in the continuous setting because the Riesz kernels in the discrete setting are not symmetric and therefore disable the push-forward technique used in the continuous setting.
\end{remark}

The relation between their version of discrete Hausdorff dimension of a family of 1-separated sets and the original discrete Hausdorff dimension used in this paper for subsets of $\Z^d$ remains unclear; however, the two different discrete Hausdorff dimensions are equal in several examples, for instance, Delone sets in \cite{iosevich2014theory}.

Theorem \ref{their} depends on the progress towards the Falconer conjecture. By applying Theorem \ref{their} to $A_{N,t}$ with the current best thresholds for the Falconer conjecture, which are $\frac{5}{4}$ when $d=2$ and $\frac{d}{2}+\frac{1}{4}-\frac{1}{8d+4}$ when $d\geq3$, one can get a conditional bound which will partially overlap with Theorem \ref{mainthm2} only if their version of discrete Hausdorff dimension is greater than or equal to the original discrete Hausdorff dimension used in this paper; nonetheless, even if their version of discrete Hausdorff dimension is greater than or equal to the original discrete Hausdorff dimension used in this paper, Theorem \ref{mainthm2} is still stronger than the conditional bound when $\theta\geq\theta_{\text{stronger}}^{\mathrm{conditional}}$ or equivalently $\dim_{\mathcal{H}}^{\text{dis}}(A)\geq d_{\text{stronger}}^{\mathrm{conditional}}$, where
$$
\theta_{\text{stronger}}^{\mathrm{conditional}}=
\left\{
\begin{array}{ll}
    \text{Theorem \ref{mainthm2} is always stronger, so there is no such }\theta_{\text{stronger}}^{\mathrm{conditional}}&\text{if }2\leq d\leq 6\\
    \frac{(d-2)(2d+1)}{2d^2-d-2} &\text{if }d\geq7
\end{array}
\right.
$$
and
$$
d_{\text{stronger}}^{\mathrm{conditional}}=
\left\{
\begin{array}{ll}
    \text{Theorem \ref{mainthm2} is always stronger, so there is no such }d_{\text{stronger}}^{\mathrm{conditional}}&\text{if }2\leq d\leq 6\\
    \frac{2d(d+1)(d-2)}{2d^2-d-2} &\text{if }d\geq7
\end{array}
\right.
$$
Additionally, the conditional bound is inapplicable when $\theta\geq\theta_{\text{inapplicable}}^{\mathrm{conditional}}$, where
$$
\theta_{\text{inapplicable}}^{\mathrm{conditional}}=\left\{
\begin{array}{ll}
    \frac{4}{5} &\text{if }d=2  \\
    \frac{2d+1}{2(d+1)} &\text{if }d\geq3 
\end{array}
\right.
$$
but in the meantime, Theorem \ref{mainthm2} is applicable when $\theta\in[0,1)$ and hence can fill this gap. We also note that the conditional bound only holds for $\theta\geq\frac{1}{2}$ if their version of discrete Hausdorff dimension is greater than or equal to the original discrete Hausdorff dimension used in this paper.
\pgfmathsetmacro{\figSevenThetaCross}{75/89}
\pgfmathsetmacro{\figSevenDimCross}{560/89}

\pgfmathsetmacro{\figSevenThetaInapp}{15/16}

\pgfmathsetmacro{\figSevenThetaSwitch}{5/6}
\pgfmathsetmacro{\figSevenDimSwitch}{25/4}


\pgfplotsset{
    figSevenStyle/.style={
        width=12cm,
        height=8cm,
        axis lines=left,
        tick label style={font=\small},
        label style={font=\small},
        legend style={
            at={(0.05,0.95)},
            anchor=north west,
            draw=black,
            fill=white,
            fill opacity=1,
            text opacity=1,
            font=\small,
            legend columns=1
        },
        legend cell align=left
    }
}
\begin{figure}[H]
\centering

\begin{tikzpicture}

\begin{axis}[
    figSevenStyle,
    xmin=0, xmax=1.04,
    ymin=0, ymax=8,
    xlabel={$\theta$},
    ylabel={Sufficient threshold for
        $\dim_{\mathcal H}^{\mathrm{dis}}(A)$},
    xtick={0,1},
    xticklabels={$0$,$1$},
    ytick={0,7},
    yticklabels={$0$,$d$},
    extra x ticks={
        \figSevenThetaCross,
        \figSevenThetaInapp
    },
    extra x tick labels={
        $\theta_{\mathrm{stronger}}^{\mathrm{conditional}}$,
        $\theta_{\mathrm{inapplicable}}^{\mathrm{conditional}}$
    },
    extra x tick style={
        tick label style={
            rotate=90,
            anchor=east,
            font=\scriptsize,
            yshift=-2pt
        }
    }
]

%
%

\addplot[
    blue!85!black,
    very thick,
    no marks
] coordinates {
    (0,0)
    (\figSevenThetaSwitch,\figSevenDimSwitch)
    (1,7)
};
\addlegendentry{Theorem \ref{mainthm2}}

\addplot[
    green!50!black,
    thick,
    domain=0:1,
    samples=2
] {7*x};
\addlegendentry{Our Conjecture \ref{conj}}

\addplot[
    red!85!black,
    thick,
    domain=0.5:1.04,
    samples=2
] {112*x/15};
\addlegendentry{Conditional bound}

\addplot[
    black,
    thin,
    domain=0:1.04,
    samples=2
] {7};
\addlegendentry{Dimension cap}

\addplot[
    gray,
    dashed,
    forget plot
] coordinates {
    (\figSevenThetaCross,0)
    (\figSevenThetaCross,\figSevenDimCross)
};

\addplot[
    gray,
    dotted,
    thick,
    forget plot
] coordinates {
    (\figSevenThetaInapp,0)
    (\figSevenThetaInapp,7)
};

\addplot[
    only marks,
    mark=*,
    mark size=2.2pt,
    black,
    forget plot
] coordinates {
    (\figSevenThetaCross,\figSevenDimCross)
    (\figSevenThetaInapp,7)
};

\end{axis}
\end{tikzpicture}

\caption{
Comparison of Theorem \ref{mainthm2}, our Conjecture \ref{conj},
and the conditional bound,
together with the dimension cap,
for $d=7$.
}
\label{fig:comparison-adapt-theta}

\end{figure}


\begin{figure}[H]
\centering

\begin{tikzpicture}

\begin{axis}[
    figSevenStyle,
    xmin=0, xmax=7.2,
    ymin=0, ymax=1.07,
    xlabel={
        $\dim_{\mathcal H}^{\mathrm{dis}}(A)$
    },
    ylabel={
        Lower bounds for
        $\dim_{\mathcal{UM}}^{\mathrm{dis}}
        (\Delta^2(A))$
    },
    xtick={0,7},
    xticklabels={$0$,$d$},
    ytick={0,1},
    yticklabels={$0$,$1$},
    extra x ticks={\figSevenDimCross},
    extra x tick labels={
        $d_{\mathrm{stronger}}^{\mathrm{conditional}}$
    },
    extra x tick style={
        tick label style={
            rotate=90,
            anchor=east,
            font=\scriptsize,
            yshift=-2pt
        }
    }
]

%
%

\addplot[
    blue!85!black,
    very thick,
    no marks
] coordinates {
    (0,0)
    (\figSevenDimSwitch,\figSevenThetaSwitch)
    (7,1)
};
\addlegendentry{Theorem \ref{mainthm2}}

\addplot[
    green!50!black,
    thick,
    domain=0:7,
    samples=2
] {x/7};
\addlegendentry{Our Conjecture \ref{conj}}

\addplot[
    red!85!black,
    thick,
    domain=3.733333333:7,
    samples=2
] {15*x/112};
\addlegendentry{Conditional bound}

\addplot[
    black,
    thin,
    domain=0:7,
    samples=2
] {1};
\addlegendentry{Dimension cap}

\addplot[
    black,
    thin,
    forget plot
] coordinates {
    (7,0)
    (7,1)
};

\addplot[
    gray,
    dashed,
    forget plot
] coordinates {
    (\figSevenDimCross,0)
    (\figSevenDimCross,\figSevenThetaCross)
};

\addplot[
    only marks,
    mark=*,
    mark size=2.2pt,
    black,
    forget plot
] coordinates {
    (\figSevenDimCross,\figSevenThetaCross)
};

\end{axis}
\end{tikzpicture}

\caption{
Comparison of the lower bounds for
$\dim_{\mathcal{UM}}^{\mathrm{dis}}
(\Delta^2(A))$
obtained from Theorem \ref{mainthm2},
our Conjecture \ref{conj}, and the conditional bound, for $d=7$.
}
\label{fig:comparison-adapt-dimension}

\end{figure}
\section{Discrete Hausdorff Dimension of Example \ref{sharp}}\label{A}
In this section, we prove the lower bound $\dim_{\mathcal{H}}^{\text{dis}}(A)\geq d\theta$ for the construction in Example \ref{sharp} for all $d\in\N$. Our proof is based on the discrete Frostman theory developed in \cite[Chapter~8]{bishop2017fractals}. We establish a uniform energy bound for a sequence of probability measures supported on successive generations of the construction and then apply the characterization of discrete Hausdorff dimension in terms of asymptotic capacity.

Let $0<\theta\leq1$ and $0<\beta<d\theta$. Recall that
$$
A_{n}=A_{n-1}+D_{n}I_n
$$
where
$$
D_{n}=\ell_n+L_{n-1}\quad\quad\quad I_n=\{0,1,\ldots,s_n-1\}^d
$$
Define
$$
A'_n=A_n\setminus A_{n-1}\quad\quad\quad\mu_n=\frac{1}{|A_n'|}\sum_{x\in A_n'}\bm{1}_{\{x\}}
$$
In other words, $\mu_n$ is the probability measure on $A$ that gives all points in $A_n'$ equal mass. Write $x=a+D_nu\in A_n'$ and $y=b+D_nv\in A_n'$, where $a,b\in A_{n-1}$ and $u,v\in I_n\setminus\{\mathbf{0}\}$. For every $t\in\N$, we define
$$
B_t(v)=\left|\left\{z\in\Z^d:\|z-v\|_{\infty}\leq t\right\}\right|=(2t+1)^d
$$
Then
\begin{align*}
    \sum_{\substack{x\in A_n'\\\|u-v\|_{\infty}\leq1}}\frac{1}{1+\|x-y\|^{\beta}}\leq\sum_{a\in A_{n-1}}\sum_{\substack{u\in I_n\setminus\{\mathbf{0}\}\\\|u-v\|_{\infty}\leq1}}1\leq |A_{n-1}|\cdot B_1(v)=3^d|A_{n-1}|
\end{align*}
and for every integer $t\geq2$, we know
\begin{align*}
    \left|\left\{u\in I_n\setminus\{\mathbf{0}\}:\|u-v\|_{\infty}=t\right\}\right|\leq B_{t}(v)-B_{t-1}(v)=(2t+1)^d-(2t-1)^d\lesssim_d t^{d-1}
\end{align*}
Moreover, if $\|u-v\|_{\infty}\geq2$, then
$$
\|x-y\|\geq D_n\|u-v\|_{\infty}-L_{n-1}\geq\frac{D_n\|u-v\|_{\infty}}{2}
$$
which implies
\begin{align*}
    \sum_{\substack{x\in A_n'\\\|u-v\|_{\infty}\geq2}}\frac{1}{1+\|x-y\|^{\beta}}&\leq\sum_{a\in A_{n-1}}\sum_{\substack{u\in I_n\setminus\{\mathbf{0}\}\\\|u-v\|_{\infty}\geq2}}\frac{2^{\beta}}{D_n^{\beta}\|u-v\|_{\infty}^{\beta}}\\
    &\leq\frac{2^{\beta}|A_{n-1}|}{D_n^{\beta}}\sum_{t=2}^{s_n-1}\frac{\left|\left\{u\in I_n\setminus\{\mathbf{0}\}:\|u-v\|_{\infty}=t\right\}\right|}{t^{\beta}}\\
    &\lesssim_d \frac{2^{\beta}|A_{n-1}|}{D_n^{\beta}}\sum_{t=2}^{s_n-1} t^{d-1-\beta}\lesssim_{d,\beta}\frac{|A_{n-1}|s_n^{d-\beta}}{D_n^{\beta}}
\end{align*}
Therefore, we have
\begin{align*}
    \int_{A} F_{\beta}(x,y)d\mu_n(x)&=\frac{\|y\|^{\beta}}{|A_n'|}\sum_{x\in A_n'}\frac{1}{1+\|x-y\|^{\beta}}\\
    &\leq\frac{(\sqrt{d}L_n)^{\beta}}{(s_n^d-1)|A_{n-1}|}\left(\sum_{\substack{x\in A_n'\\\|u-v\|_{\infty}\leq1}}\frac{1}{1+\|x-y\|^{\beta}}+\sum_{\substack{x\in A_n'\\\|u-v\|_{\infty}\geq2}}\frac{1}{1+\|x-y\|^{\beta}}\right)\\
    &\lesssim_{d,\beta}\frac{L_{n}^{\beta}}{s_n^d}\left(1+\frac{s_n^{d-\beta}}{D_n^{\beta}}\right)\\
    &=\frac{L_{n}^{\beta}}{s_n^d}+\left(\frac{L_n}{s_nD_n}\right)^{\beta}\leq s_{n}^{\frac{\beta}{\theta}-d+o(1)}+1^{\beta}
\end{align*}
which implies
$$
\sup_{n\geq2}\mathcal{E}_{\beta}(\mu_n)=\sup_{n\geq2}\int_{A}\int_{A} F_{\beta}(x,y)d\mu_n(x)d\mu_n(y)<\infty
$$
For every finite subset $F\subseteq A$, there exists $n_0\geq2$ such that $A_{n}'\subseteq A\setminus F$ for all $n\geq n_0$. Thus, we obtain
$$
\mathrm{Cap}_{\beta}\left(A\setminus F\right)\geq\mathrm{Cap}_{\beta}(A_n')\geq\frac{1}{\mathcal{E}_{\beta}(\mu_n)}\geq\frac{1}{\displaystyle\sup_{n\geq2}\mathcal{E}_{\beta}(\mu_n)}>0
$$
which implies
$$
\mathrm{Cap}_{\beta}^{\infty}(A)=\inf_{\substack{F\subseteq A\\F\text{ finite}}}\mathrm{Cap}_{\beta}\left(A\setminus F\right)\geq\frac{1}{\displaystyle\sup_{n\geq2}\mathcal{E}_{\beta}(\mu_n)}>0
$$
By Theorem 8.1.3 in \cite{bishop2017fractals}, we get
$$
\dim_{\mathcal{H}}^{\text{dis}}(A)\geq\beta\text{ for all }0<\beta<d\theta
$$
which implies $\dim_{\mathcal{H}}^{\text{dis}}(A)\geq d\theta$.
\section{Acknowledgments}
The authors would like to thank HaoYun Yao for helpful comments on an early draft of this paper and for providing relevant references, and Chun-Yen Shen for suggesting an interesting research problem during NCTS One Day Workshop on Harmonic Analysis.
\clearpage
\phantomsection
\addcontentsline{toc}{section}{References}
\printbibliography
\end{document}